\newtheorem{theorem}{Theorem}[section]
\newtheorem{definition}[theorem]{Definition}
\let\OLDthebibliography\thebibliography
\renewcommand\thebibliography[1]{
	\OLDthebibliography{#1}
	\setlength{\parskip}{0pt}
	\setlength{\itemsep}{0pt plus 0.3ex}
}
\date{}
\begin{document}

% document title and author
\title{\LARGE Explicit Multi-objective Model Predictive Control for Nonlinear Systems Under Uncertainty}
\author[1]{\large Carlos Ignacio Hern\'andez Castellanos}
\author[2]{Sina Ober-Bl\"obaum}
\author[3]{Sebastian Peitz}
\affil[1]{Department of Computer Science, CINVESTAV-IPN, Mexico City, Mexico}
\affil[2]{Department of Engineering Science, University of Oxford, United Kingdom}
\affil[3]{Department of Mathematics, Paderborn University, Germany}

\maketitle

%%%%%%%%%%%%%%%%%%%%%%%%%%%%%%%%%%%%%%%%%%%%%%%%%%%%%%%%%%%%%%%%%%%%%%%%%%%%%%%%%%%%%%%%%%%%%%
%% Abstract
%%%%%%%%%%%%%%%%%%%%%%%%%%%%%%%%%%%%%%%%%%%%%%%%%%%%%%%%%%%%%%%%%%%%%%%%%%%%%%%%%%%%%%%%%%%%%%
\begin{abstract}
In real-world problems, uncertainties (e.g., errors in the measurement, precision errors) often lead to poor performance of numerical algorithms when not explicitly taken into account. This is also the case for control problems, where optimal solutions can degrade in quality or even become infeasible. Thus, there is the need to design methods that can handle uncertainty. In this work, we consider nonlinear multi-objective optimal control problems with uncertainty on the initial conditions, and in particular their incorporation into a feedback loop via model predictive control (MPC). In multi-objective optimal control, an optimal compromise between multiple conflicting criteria has to be found. For such problems, not much has been reported in terms of uncertainties. To address this problem class, we design an offline/online framework to compute an approximation of efficient control strategies. This approach is closely related to explicit MPC for nonlinear systems, where the potentially expensive optimization problem is solved in an offline phase in order to enable fast solutions in the online phase. In order to reduce the numerical cost of the offline phase -- which grows exponentially with the parameter dimension -- we exploit symmetries in the control problems. Furthermore, in order to ensure optimality of the solutions, we include an additional online optimization step, which is considerably cheaper than the original multi-objective optimization problem. We test our framework on a car maneuvering problem where safety and speed are the objectives. The multi-objective framework allows for online adaptations of the desired objective. Alternatively, an automatic scalarizing procedure yields very efficient feedback controls. Our results show that the method is capable of designing driving strategies that deal better with uncertainties in the initial conditions, which translates into potentially safer and faster driving strategies.
\end{abstract}

%%%%%%%%%%%%%%%%%%%%%%%%%%%%%%%%%%%%%%%%%%%%%%%%%%%%%%%%%%%%%%%%%%%%%%%%%%%%%%%%%%%%%%%%%%%%%%
%% Introduction
%%%%%%%%%%%%%%%%%%%%%%%%%%%%%%%%%%%%%%%%%%%%%%%%%%%%%%%%%%%%%%%%%%%%%%%%%%%%%%%%%%%%%%%%%%%%%%
\section{Introduction}
    
    In many real-world engineering problems, one is faced with the problem that several objectives have to be optimized concurrently, leading to a multi-objective optimization problem (MOP). The typical goal for such problems is to identify the set of optimal tradeoff solutions (the so-called \emph{Pareto set}) and its image in objective space, the \emph{Pareto front}. 
    Similar problems occur in the context of \emph{control}, where an input function has to be computed such that a dynamical system behaves optimally with respect to multiple objectives. One particularly successful approach for feedback control with conflicting criteria is \emph{model predictive control (MPC)}, where an open loop optimal control problem is solved repeatedly over a finite horizon and then directly applied to the real system, which is running in parallel. As this requires the solution of MOPs in real-time, special measures need to be taken in the case of multiple criteria. Possible approaches are the \emph{weighting} of objectives \cite{BP09} or \emph{reference point tracking} \cite{ZF12}, see also \cite{PD18} for an overview. An alternative approach is \emph{explicit MPC} \cite{bemporad2002explicit}, where instead of solving an optimization problem online, the optimal input is selected from a library of optimal inputs which is computed in an offline phase. In the multi-objective case, this was used in \cite{PSO+17} and \cite{ober2018explicit}.
    
    An additional difficulty for feedback control is the issue of model inaccuracies.
    Thus, the decision maker may, in practice, not always be interested in the exact Pareto optimal solutions, in particular, if these are sensitive to perturbations \cite{Beyer20073190}. Instead, solutions which are more robust are preferable, which leads to \emph{robust multi-objective optimization problems (RMOPs)} \cite{ehrgott2014minmax}. In this case, the definition of a robust solution is not unique since it depends on the information available and the type of uncertainty present in the problem \cite{kuroiwa2012robust,doolittle2018robust,fliege2014robust,Eichfelder2017,PD18b}. The interested reader is referred to \cite{ide2016robustness} for a survey of the different robustness definitions.
    
    In this work, we will study nonlinear uncertain multi-objective optimal control problems in the sense of \emph{set-based minmax robustness} \cite{ehrgott2014minmax}. This definition is the natural extension of the minmax from single-objective optimization (also called \emph{worst-case optimization}). There exist a few applications of worst-case multi-objective optimization. Examples can be found in \cite{doolittle2018robust}, where the authors solved an internet routing problem, and \cite{fliege2014robust}, where it was used for portfolio optimization.
    
    Until now, worst-case optimization has not drawn much attention in multi-objective optimal control. However, there exist multiple studies for the single-objective case. Lofberg \cite{lofberg2003approximations} introduced an approach for solving closed-loop minimax problems for linear-time discrete systems in an MPC framework to avoid the controller to be over-conservative. \cite{bemporad2003min} addressed discrete-time uncertain linear systems with polyhedral parametric uncertainty. In \cite{walton2016numerical}, the authors described a numerical method to solve nonlinear control problems with parameter uncertainty. In this case, the problem was solved by a sequence of multi-parametric linear programs. Hu and Ding \cite{hu2019efficient} presented an offline MPC approach to reduce the online computational burden on discrete-time uncertain LPV systems. 
    
    In this article, we build on these ideas in order to construct a multi-objective MPC framework for nonlinear systems with uncertainties, which extends the work from \cite{ober2018explicit}, where the deterministic case was considered. In particular, we consider uncertainties in the initial conditions which might arise from inaccurate sensor measurements in each MPC iteration. As multi-objective optimization problems usually cannot be solved in real-time, we use ideas from explicit MPC for nonlinear systems, where a library of solutions is computed in an offline phase for many different initial conditions. The offline phase thus requires the solution of a parametric multi-objective optimal control problem with uncertainties. In the online phase, the problem is then reduced to selecting a solution from a library (and potentially interpolation between multiple entries).
    %To address this problem, we extend the recently developed framework in \cite{ober2018explicit} for multi-objective MPC of nonlinear systems and design an offline/online framework to compute approximations of efficient control strategies. In particular, we construct a library of efficient solutions for a discretization of initial conditions in an offline phase. This library allows for the efficient selection of the solutions during the simulation.
    To avoid feasibility issues due to the interpolation, another extension is proposed where
    %Moreover, it might be infeasible to precompute the efficient solutions for all initial conditions beforehand. Thus, 
    during the simulation, the solution from the library is further refined (in a comparably cheap optimization step) in order to match the exact initial conditions. Finally, we exploit symmetries in the control problems 
    %and corresponding motion primitives 
    to reduce the complexity of the  offline phase and increase the efficiency of the proposed methods.
    
    The article is organized as follows. In Section 2, we present the basic definitions of multi-objective optimal control under uncertainty, symmetries, and model predictive control. In Section 3, we then extend the result on symmetries in nonlinear control systems from \cite{ober2018explicit} to uncertainties before introducing our framework for solving multi-objective optimal control under uncertainty in the initial conditions in Section 4. We then study an example from autonomous driving in Section 5, and we present our conclusions and future paths for research in Section 6.

\section{Background}
    In this section, we introduce the basic concepts that are utilized in the consecutive sections. First, we introduce the multi-objective optimal control problem. Next, we introduce the concept of uncertainty and efficiency that we will use in this work.
    %before extending the concept of symmetries to multi-objective optimal control problems under uncertainty. 
    Finally, we present some basic concepts of model predictive control.
    \subsection{Multi-objective optimal control}
    The basis for all considerations is the following general nonlinear multi-objective optimal control problem:
    \begin{definition}[Multi-objective optimal control problem]
    \begin{equation}
        \label{eq:mocp}
        \begin{split}
            \min\limits_{x\in\mathcal{X}, u\in\mathcal{U}} J(x,u) &= 
                \begin{pmatrix}
                    \int^{t_e}_{t_0} C_1(x(t),u(t))dt + \Phi_1(x(t_e))\\
                    \vdots\\
                    \int^{t_e}_{t_0} C_k(x(t),u(t))dt + \Phi_k(x(t_e))
                \end{pmatrix} \\
                s.t. \quad\quad\quad\dot x(t) &= f(x(t),u(t)),\\
                x(t_0) &= x_0,\\
                g_i(x(t),u(t)) &\leq 0,\;  i=1,\ldots,l,\\
                h_j(x(t),u(t)) &= 0,\;  j=1,\ldots,m, \\
        \end{split}
    \end{equation}
    %where $t\in (t_0,t_e]$, $x(t) \in \mathbb{R}^{n_x}$ is the system state, and $u(t) \in U \subseteq \mathbb{R}^{n_u}$ is the control variable. Moreover, $U$ is closed and convex. 
    where $t\in (t_0,t_e]$, $x \in \mathcal{X} = W^{1,\inf}([t_0,t_e], \mathbb{R}^{n_x})$ is the system state, and $u \in \mathcal{U} = L^{\infty}([t_0,t_e],U)$ is the control trajectory with $U$ being closed and convex.
    $J:\mathcal{X}\times\mathcal{U}\rightarrow\mathbb{R}^k$ denotes the objective function with $k$ objectives in conflict, 
    $f$ describes the system dynamics, and $g = (g_1,\ldots,g_l)^T$ and $h = (h_1,\ldots,h_m)^T$ are the inequality and equality constraint functions, respectively.
    The functions $C_i:\mathbb{R}^{n_x}\times U \rightarrow \mathbb{R}, \Phi_i: \mathbb{R}^{n_x}\times U \rightarrow \mathbb{R}$ are continuously differentiable for $i=1,\ldots,k$. Moreover, $f:\mathbb{R}^{n_x}\times U \mapsto \mathbb{R}^{n_x}$ is Lipschitz continuous, and $g,h: \mathbb{R}^{n_x}\times U \mapsto \mathbb{R}^l$ are continuously differentiable. The pair $(x,u)$  is called a \emph{feasible pair} if it satisfies the constraints of Problem \ref{eq:mocp}. The space of the control trajectories $\mathcal{U}$ is also known as the decision space and its image is the so-called objective space.
    \end{definition}
    %The sets $\mathcal{X} = W^{1,\inf}([t_0,t_e], \mathbb{R}^{n_x})$ and $\mathcal{U} = L^{\infty}([t_0,t_e],U)$ are function spaces. 
    %$J:\mathcal{X}\times\mathcal{U}\rightarrow\mathbb{R}^k$ is the objective function with $k$ objectives in conflict. The functions $C_i:\mathbb{R}^{n_x}\times U \rightarrow \mathbb{R}, \Phi_i: \mathbb{R}^{n_x}\times U \rightarrow \mathbb{R}$ are continuously differentiable for $i=1,\ldots,k$. Next, $f:\mathbb{R}^{n_x}\times U \mapsto \mathbb{R}^{n_x}$ is Lipschitz continuous. Further, $g: \mathbb{R}^{n_x}\times U \mapsto \mathbb{R}^l$, $g = (g_1,\ldots,g_l)^T$ and $h: \mathbb{R}^{n_x}\times U \mapsto \mathbb{R}^m$, $h = (h_1,\ldots,h_m)^T$ are continuously differentiable. $g$ and $h$ are the inequality and equality constraint functions, respectively. The pair $(x,u)$  is called a \emph{feasible pair} if it satisfies the constraints of Problem \ref{eq:mocp}. The space of the control trajectories $\mathcal{U}$ is also known as the decision space and its image is the so-called objective space.

    Problem \ref{eq:mocp} can be simplified by introducing the flow of the dynamical system:
    \begin{equation}
        \varphi_u(x_0,t) = x_0 + \int_{t_0}^{t}f(x(t),u(t))dt.
    \end{equation}
    
    As a consequence, the explicit dependency of $J$, $g$ and $h$ on $x$ can be removed and the formulation replaced by a parametric problem with $x_0$ being the parameter:
    \begin{equation}
        \label{eq:smocp}
        \begin{split}
            \min\limits_{u\in\mathcal{U}}& \hat J(x_0,u)\\
            &\hat g_i(x_0,\mathfrak{u}) \leq 0, i=1,\ldots,l,\\
            &\hat h_j(x_0,\mathfrak{u}) = 0, j=1,\ldots,m,
        \end{split}
    \end{equation}
    where $t\in (t_0,t_e]$ and
    \begin{equation}
        \hat J_i(x_0,u) = \int_{t_0}^{t_e}\hat C_i(x_0,\mathfrak{u})dt + \hat\Phi(x_0,\mathfrak{u})
    \end{equation}
    with $\hat C_i(x_0,\mathfrak{u}):=C_i(\varphi_u(x_0,t),u(t))$ and $\hat \Phi_i(x_0,\mathfrak{u}):=\Phi(\varphi(x_0,t_e))$ for $i=1,\ldots,k$. Here, $\mathfrak{u}:=u|_{[t_0,t]}$ is introduced to preserve the time dependency. The constraints $\hat g(x_0,\mathfrak{u})$ and $\hat h(x_0,\mathfrak{u})$ are defined accordingly. $u$ is called a \emph{feasible curve} if it satisfies the equality and inequality constraints $\hat g_i, i = 1,\ldots,l$, and $\hat h_j,j=1,\ldots,m$. In the remainder of the manuscript, we will restrict ourselves to inequality constraints.
    
    \subsection{Introducing uncertainty to MOCP}
        The problem formulation \eqref{eq:smocp} allows us to treat uncertainties in the initial conditions as parameter uncertainties:
        \begin{definition}[Multi-objective optimal control problem  with uncertainty in the initial state (uMOCP)]
            Given a set of initial conditions $\mathcal{Y} \subseteq \mathbb{R}^{n_x}$ and a set of control variables $U \subseteq \mathbb{R}^{n_u}$, a known uncertain set $\mathcal{Z} \subseteq \mathbb{R}^{n_x}$ and an objective function $\hat J: \mathcal{Y}\times \mathcal{U}\times\mathcal{Z} \rightarrow \mathbb{R}^k$, a multi-objective optimal control problem with uncertainty in the initial state
            \begin{equation}
                \mathcal{P}(\mathcal{Z}) := (\mathcal{P}(\alpha),\alpha \in \mathcal{Z})
            \end{equation}
            is defined as the family of parametrized problems
            \begin{equation}
                \begin{split}                    
                    \mathcal{P}(\alpha) &:= \min \hat J(x_0+\alpha, u)\\
                  s.t.\quad  & \hat g_i (x_0 + \alpha, u) \leq 0, \;i=1,\ldots,l.
                \end{split}
            \end{equation}
        \end{definition}
        
        Note that the solution to such a problem is not uniquely defined. In this work, we use the definition of set-based minmax robustness (SBR) proposed in \cite{ehrgott2014minmax} in the context of multi-objective optimal control (see \cite{ide2016robustness} for other interpretations of efficiency):
        
        \begin{equation}
        \label{eq:usmocp}
            \begin{split}
                \min\limits_{u \in \mathcal{U}} &\sup\limits_{\alpha \in \mathcal{Z}} \hat J(x_0+\alpha, u)\\
                s.t.\quad & \sup\limits_{\alpha\in\mathcal{Z}} \hat g_i (x_0 + \alpha, u) \leq 0, \;i=1,\ldots,l.
            \end{split}            
        \end{equation}
        
        The authors generalize the definition of efficiency from classical multi-objective optimization problems \cite{pareto:71} by replacing the single points $\hat J(x_0, u) \in \mathbb{R}^k$ in Problem \eqref{eq:smocp} by the sets
        \begin{equation}
            \hat J_\mathcal{Z}(x_0,u) = \{\hat J(x_0+\alpha, u): \alpha \in \mathcal{Z}\}
        \end{equation}
        of all possible objective values under all scenarios. Similarly to \cite{Eichfelder2017}, the true initial condition $x_0 \in \mathbb{R}^{n_x}$  is an element of the set $\{x_0\}+\mathcal{Z} = \{x_0+\alpha: \alpha\in \mathcal{Z}\}$\footnote{Note that $\hat J$ is now of the form $\hat J:\mathbb{R}^{n_x}\times\mathcal{U}\rightarrow\mathbb{R}^k$ since we consider decision uncertainty.}. Furthermore, we require $0 \in \mathcal{Z}$ to include the exact initial condition.

        \begin{definition}[Set-based minmax robust efficiency]
            Given an uncertain multi-objective optimal control problem $\mathcal{P}(\mathcal{Z})$ and an initial condition $x_0$, a feasible curve $\bar u \in \mathcal{U}$ is called set-based minmax robust efficient (re) if there is no $u' \in \mathcal{U}\backslash{\bar u}$ such that
            \begin{equation}
                \label{eq:sbr}
                \hat J_\mathcal{Z}(x_0, u') \subseteq \hat J_\mathcal{Z}(x_0, \bar u) - \mathbb{R}^k_\succeq,
            \end{equation}
            where $\mathbb{R}^k_\succeq$ denotes the set $\{z\in \mathbb{R}^k: z \geq 0, i = 1,\ldots,k\}$ and the relation $\geq$ is defined as presented in \cite{ehrgott:05}.
        \end{definition}
        Note, that in the remainder of the paper, we use the term efficient curve (resp.~set) to refer to the set-based minmax robust efficient curve (resp.~set). Finally, we will define as $\mathcal{R}$ the efficient set and $J_\mathcal{Z}(\mathcal{R})$ its image. In the following example, we visualize the previous definitions. Consider the following uMOCP with $J: \mathbb{R}^2\times U \rightarrow\mathbb{R}^2$:
       
        \begin{equation}
            \label{eq:lss25}
            \min\limits_{u\in U} J(\alpha, u) = 
                \min\limits_{u\in U}
                \begin{pmatrix}
                    \frac{1}{n^{0.25}}((u_1+\alpha_1)^2+(u_2+\alpha_2)^2)^{0.25} \\
                    \frac{1}{n^{0.25}}((1-(u_1+\alpha_1))^2+(1-(u_2+\alpha_2))^2)^{0.25}
                \end{pmatrix}.
        \end{equation}
        The set $U$ consists of four feasible points, i.e., $U=\{u_{I},u_{II},u_{III},u_{IV}\}$ with
        \begin{align*}
            u_I = \begin{pmatrix} -0.3545 \\ 1.3044 \end{pmatrix},~
            u_{II} = \begin{pmatrix} 0.6445 \\ 0.2392 \end{pmatrix}, ~
            u_{III} =  \begin{pmatrix} 0.3760 \\ -0.7945 \end{pmatrix}, ~
            u_{IV} = \begin{pmatrix} 1.7017 \\ 0.6869 \end{pmatrix}
        \end{align*}

        %$U=\{u_{I},u_{II},u_{III},u_{IV}\}$, $u_{I}=[-0.3545, 1.3044]^T, \linebreak u_{II}=[0.6445, 0.2392]^T, u_{III}=[0.3760   -0.7945]^T,$ $u_{IV}=[1.7017 ,0.6869]^T$ 
        and $-0.2 \leq \alpha_i \leq 0.2$, $i=1,2$.
        
        Figure \ref{fig:ex5} shows an example of minmax efficiency, where the set of feasible points\footnote{Note since we consider a finite-dimensional problem, i.e., $u\in \mathbb{R}^2$, feasible curves reduce to feasible points in $\mathbb{R}^2$.} is shown in \ref{fig:ex5s1}. Figure \ref{fig:ex5s2} shows all possible realizations of the feasible points when considering the uncertainty $\alpha$. Next, Figure \ref{fig:ex5s3} shows the supremum sets for each $\hat J_{\cal{Z}}(x_0, u)$, $u\in U$, and we can observe in Figure~\ref{fig:ex5s4} that the $\hat J_{\mathcal{Z}}(0,u_{I}) - \mathbb{R}^2$ (blue) and $\hat J_{\mathcal{Z}}(0,u_{III}) - \mathbb{R}^2$ (purple) contain the $\hat J_{\mathcal{Z}}(0,u_{II})$ (red). Thus, they are not efficient. Further, the $u_{II}$ (red) is efficient since $\hat J_{\mathcal{Z}}(0,u_{II}) - \mathbb{R}^2$ does not contain $\hat J_{\mathcal{Z}}(0,u_I) (blue), \hat J_{\mathcal{Z}}(0,u_{III})$ (purple) nor $\hat J_{\mathcal{Z}}(0,u_{IV})$ (yellow). Finally, $u_{IV}$ is also an efficient solution.
                
        \begin{figure}
        	\centering
            \begin{subfigure}[t]{.4\textwidth}
                \includegraphics[width=\columnwidth]{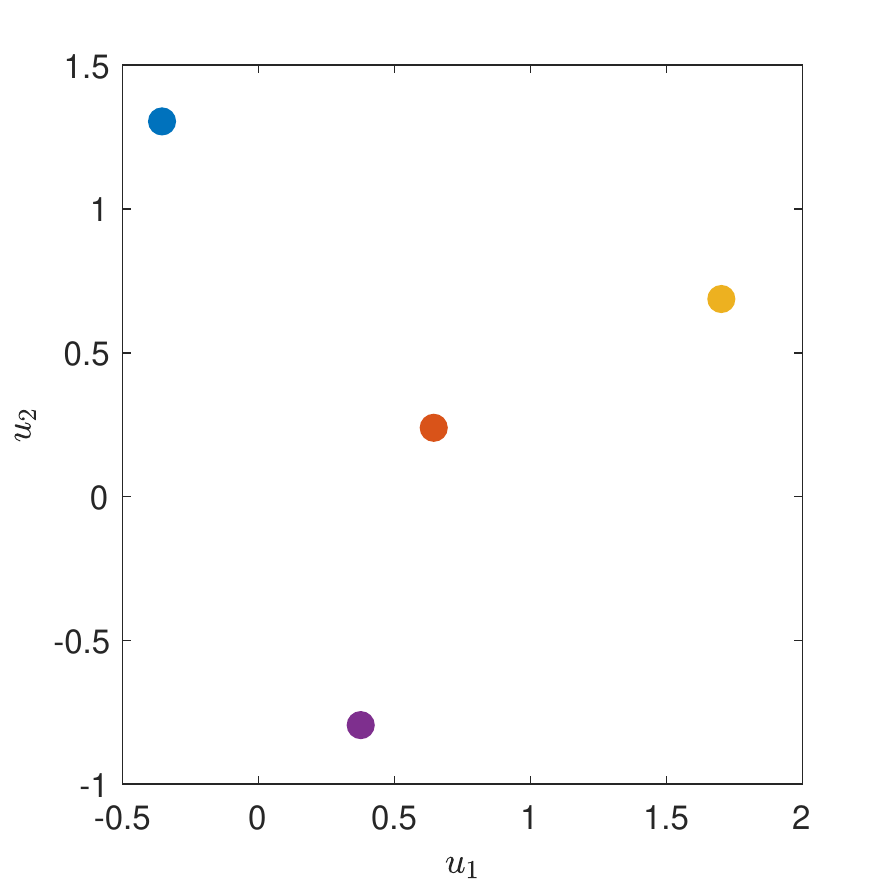}
                \caption{Feasible points in $U$.}
                \label{fig:ex5s1}
            \end{subfigure}
        	\hfil
            \begin{subfigure}[t]{.4\textwidth}
                \includegraphics[width=\columnwidth]{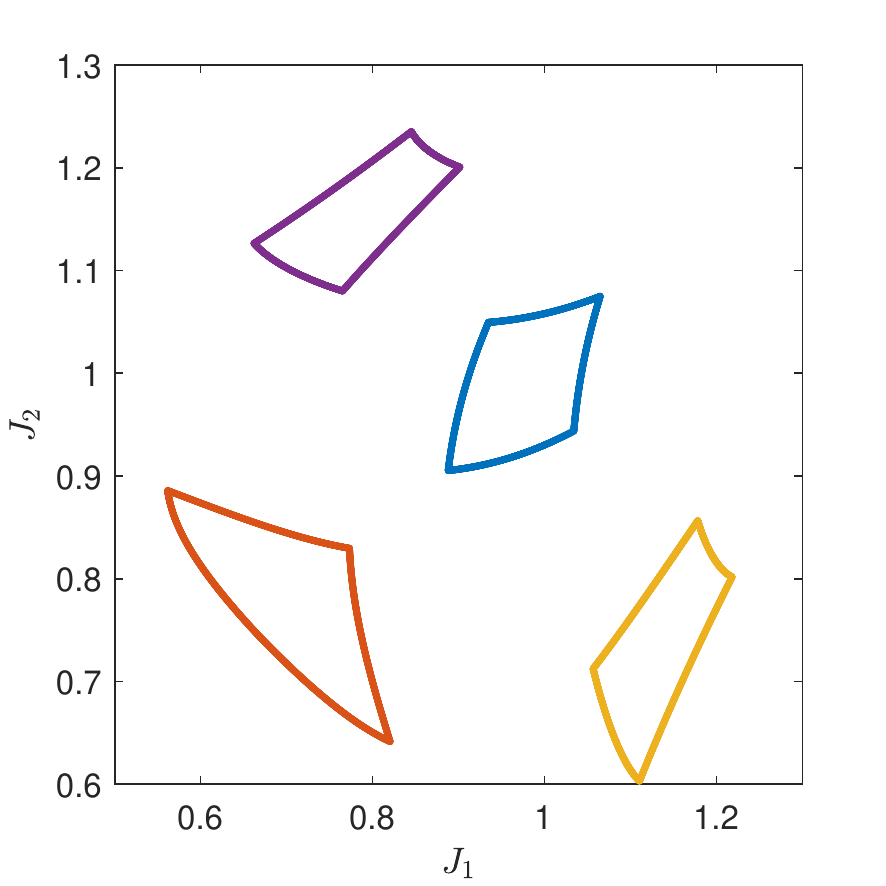}
                \caption{$\hat J_Z(x_0,u)$ for all $u \in U$.}
                \label{fig:ex5s2}
            \end{subfigure}\\
            \begin{subfigure}[t]{.4\textwidth}
                \includegraphics[width=\columnwidth]{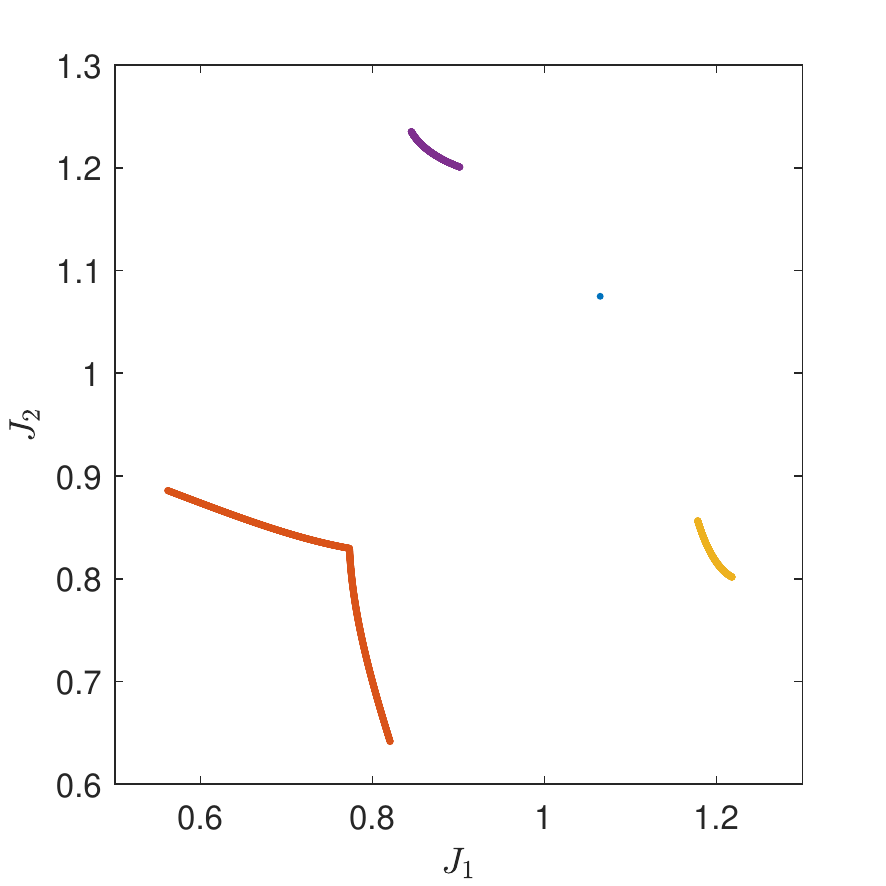}
                \caption{$\sup \hat J_Z(x_0,u)$.}
                \label{fig:ex5s3}
            \end{subfigure}
        	\hfil
            \begin{subfigure}[t]{.4\textwidth}
                \includegraphics[width=\columnwidth]{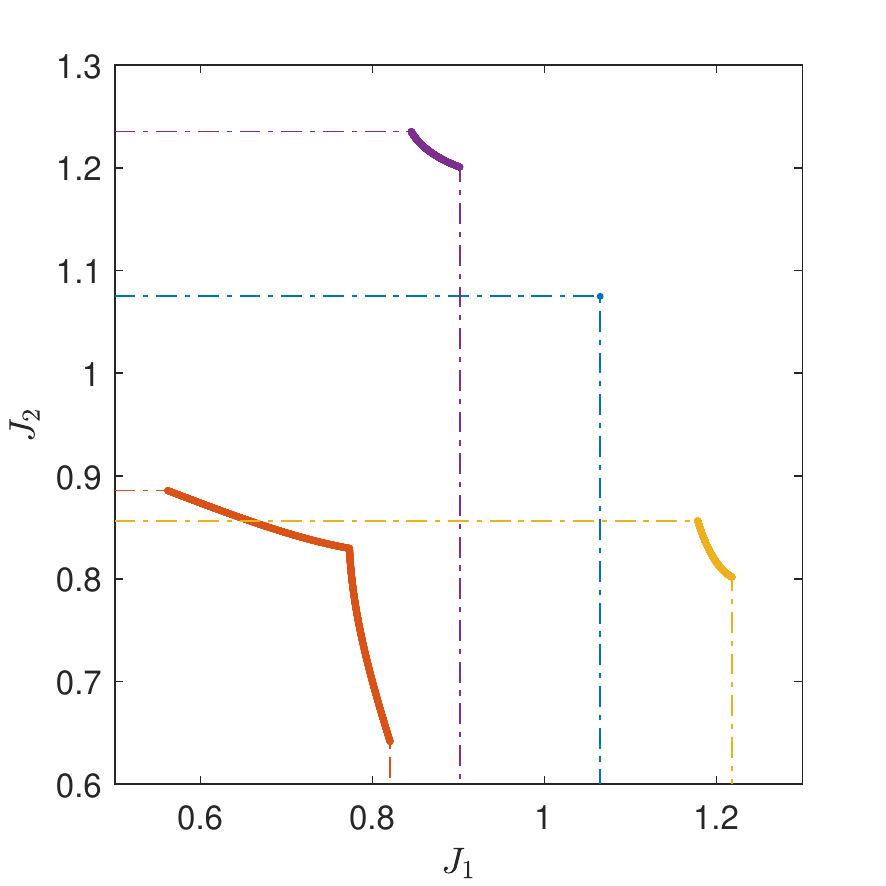}
                \caption{$\hat J_Z(x_0,u)-\mathbb{R}^2_{\succeq}$.}
                \label{fig:ex5s4}
            \end{subfigure}
            \caption{Example of minmax robust efficient solutions of Problem \eqref{eq:lss25}. In all cases, the colors identify the same solutions.}
            \label{fig:ex5}
        \end{figure}
    
    \subsection{Model predictive control}
    
    Model predictive control (MPC, see \cite{grune2017nonlinear} for a detailed introduction) is a very popular and highly flexible framework to construct a feedback control law using a model of the system dynamics. In order to obtain a feedback signal, the model-based open loop problem \eqref{eq:smocp} is solved on a finite-time \emph{prediction horizon} of length $t_p$. This means that we set $t_e = t_0 + t_p$. Then a small part of length $t_c \leq t_p$ is applied to the real system, and the problem has to be solved again on a shifted time horizon, i.e., for $t_0 = t_0 + t_c$ and $t_e = t_e + t_p$.
    Several extensions to multiple objectives have been presented \cite{PD18}. Well-known approaches are the \emph{weighted sum method} \cite{BP09} or \emph{reference point tracking} \cite{ZF12}.
    
    The MPC framework allows for easy incorporation of nonlinear dynamics as well as constraints. However, the solution has to be obtained within the control horizon $t_c$, which is particularly challenging in the presence of multiple objectives.
    A remedy to this issue is \emph{explicit MPC} where -- instead of solving the control problem online -- the optimal input is selected from a library of optimal inputs which was computed in an offline phase for all possible initial conditions. In the linear-quadratic case\cite{bemporad2002explicit}, this library is computed using multi-parametric programming. The explicit solution is exact for all inputs even though only a finite number of problems has to be solved. As the number of problems can quickly become prohibitively expensive, an extension to exploit symmetries has been proposed in Danielson and Borrelli\cite{DB12} for linear-quadratic problems.
    
    If the dynamics are nonlinear, then the explicit MPC procedure requires interpolation between different library entries. In this situation, the optimal inputs cannot be characterized by polygons as before. Instead, the space of initial conditions is discretized, and an optimal control problem is solved for each initial condition within this grid \cite{Joh02,BF06}. To obtain controls for intermediate values, interpolation is used, which results in an additional error that has to be taken into account. In \cite{ober2018explicit}, a very similar path was taken for multi-objective optimal control problems, where the interpolation is performed between elements of Pareto sets.
    
    \section{Symmetries in uMOCP}
    Symmetry identification in dynamical systems and optimal control can help to simplify the problem at hand, as invariances in a system lead to several (possibly infinitely many) identical solutions. These can then be replaced by one representative solution, which leads to faster and more efficient numerical computations, as the representative has to be computed only once. This concept was used in the control context in \cite{frazzoli2001robust,frazzoli2005maneuver}, where solutions of optimal control problems are obtained very efficiently by combining elements from a pre-computed library of so-called \emph{motion primitives}. Inspired by the concept of motion primitives, symmetries in dynamical control systems were exploited in \cite{ober2018explicit} to design explicit MPC algorithms for nonlinear MOCPs to reduce the computational effort to solve the problem.
    
    In the following, we extend this last study on symmetries in MOCPs to problems with uncertainty in the initial state. Formally, we describe symmetries by a finite-dimensional Lie group G and its group action $\psi:\mathbb{R}^{n_x}\times G \rightarrow \mathbb{R}^{n_x}$. For each $g\in G$, we denote by $\psi_g:\mathbb{R}^{n_x} \rightarrow \mathbb{R}^{n_x}$ the diffeomorphism defined by $\psi_g:= \psi(\cdot,g)$.
    
    We want to identify efficient solutions to Problem \eqref{eq:usmocp} that remain efficient when the initial conditions are transformed by the symmetry group action such that
    \begin{equation}
        \label{eq:symre}
        \arg\min\limits_u\sup_\alpha \hat J(x_0+\alpha, u) = \arg\min\limits_u\sup_\alpha \hat J(\psi_g(x_0+\alpha), u) \;\; \forall g\in G.
    \end{equation}
    
    The following theorem provides conditions under which Equation \eqref{eq:symre} holds.
    
    \begin{theorem}[Symmetry of uMOCP]
        \label{thm:sumocp}
        Let $\mathcal{X} = W^{1,\infty}([t_0,t_e],\mathbb{R}^{n_x})$ and $\mathcal{U} = L^{\infty}([t_0,t_e],\mathbb{R}^{n_u})$. If
        \begin{enumerate}
         \item the dynamics are invariant under the Lie Group action $\psi$, i.e. $\psi_g(\varphi_u(x_0,t)) = \varphi_u(\psi_g(x_0),t)$ for all $g\in G, x_0 \in \mathbb{R}^{n_x}$, $t\in [t_0,t_e]$ and $u\in\mathcal{U}$;
         \item there exist $\eta,\beta,\delta \in \mathbb{R}, \eta \neq 0$, such that the cost functions $C_i$ and the Mayer terms $\Phi_i,i=1,\ldots , k$, are invariant under the Lie Group action $\psi$ up to linear transformation, i.e.,
         \begin{equation}\label{eq:inv_cost}
            C_i(\psi_g(x),u) = \eta C_i(x,u)+\beta
         \end{equation}
         and
         \begin{equation}\label{eq:inv_Mayer}
            \Phi_i(\psi_g(x_e)) = \eta\Phi(x_e)+\delta\; \text{ for } i=1,\ldots,k;
         \end{equation}
         \item the constraints $g_i, i=1,\ldots,l$ are invariant under the Lie Group action $\psi$, i.e.,
         \begin{equation}\label{eq:inv_con}
            \begin{split}
                g_i(\psi_g(x),u) = g_i(x,u)\; \text{ for } i=1,\ldots,l,\\
            \end{split}
         \end{equation}

          then we have
          \begin{equation}
            \arg\min\limits_u\sup_\alpha \hat J(x_0+\alpha, u) = \arg\min\limits_u\sup_\alpha \hat J(\psi_g(x_0+\alpha), u) \; \forall g\in G.
            \label{eq:prf}
          \end{equation}
        \end{enumerate}
        
        We say that problem (uMOCP) is invariant under the Lie group action $\psi_g$, or equivalently, $G$ is a symmetry group for problem (uMOCP).

        \begin{proof}
            In order to prove the theorem, we will first show feasibility and then optimality. In both cases, a key component of the proof is the fact that $x_0 + \alpha \in \mathbb{R}^{n_x}$. Thus, the invariance property of the dynamics also hold for uncertainty in the initial condition as
            \begin{equation}\label{eq:inv_unc}
            \psi_g(\varphi_u(x_0+\alpha,t)) = \varphi_u(\psi_g(x_0+\alpha),t) 
            \end{equation}
            for all $g\in G, x_0+\alpha \in \mathbb{R}^{n_x}$, $t\in [t_0,t_e]$ and $u\in\mathcal{U}$.
            
            \underline{Feasibility.} Let $u$ be a feasible curve of problem (uMOCP) and let $\varphi_u(x_0+\alpha,t)$ be the solution of the initial value problem. We now consider problem (uMOCP) with initial value $\psi_g(x_0+\alpha)$. Substituting $u$ into the inequality constraints of the transformed (MOCP):
            \begin{equation}
                \begin{split}
                    \hat g_i(\psi_g(x_0+\alpha), \mathfrak{u}) &= g_i(\varphi_u(\psi_g(x_0+\alpha), t), u(t)) \\
                    &\stackrel{\eqref{eq:inv_unc}}{=} g_i(\psi_g(\varphi_u(x_0+\alpha, t)), u(t)) \\
                    &\stackrel{\eqref{eq:inv_con}}{=} g_i(\varphi_u(x_0+\alpha, t), u(t)) \\
                    &= \hat g_i(x_0+\alpha, \mathfrak{u}) \leq 0 \\
                \end{split}
            \end{equation}
            for $i=1,\ldots,l$ and for all $\alpha \in \mathcal{Z}$.
            
            \underline{Optimality.} First, we prove that solutions to the supremum problem are invariant the under group actions on initial conditions. Then, we prove the same follows for the minimization problem. Assume the maximum exists for all $u\in \mathcal{U}$ in Equation (\ref{eq:prf}), let $\alpha \in \arg\max_{\alpha} \hat J(x_0+\alpha, u)$, and assume there exists an $\tilde\alpha$ such that
            \begin{equation}
                \begin{split}                                 
                    &\hat J(\psi_g(x_0+\tilde\alpha), u) > \hat J(\psi_g(x_0+\alpha),u) \\
                    &\stackrel{\eqref{eq:inv_unc}}{\Leftrightarrow} J(\varphi_u(\psi_g(x_0+\tilde\alpha),\cdot),u) >
                    J(\varphi_u(\psi_g(x_0+\alpha),\cdot),u) \\
                    &\stackrel{\eqref{eq:inv_cost},\eqref{eq:inv_Mayer}}{\Leftrightarrow} J(\varphi_u(x_0+\tilde\alpha),u) >
                    J(\varphi_u(x_0+\alpha),u) \\
                    &\Leftrightarrow \hat J(x_0+\tilde\alpha,u) > \hat J(x_0+\alpha,u)
                \end{split}
            \end{equation}

            This is a contradiction to $\alpha \in \arg\max_\alpha \hat J(x_0+\alpha,u)$. Thus, the Pareto set for the maximization problem with initial values $x_0+\alpha$ and $\psi(x_0+\alpha)$ are identical. 
            
            Next, let $u \in \arg\min_{u} \max_{\alpha}\hat J(x_0+\alpha, u)$, and assume there exists an $\tilde u$ such that
            \begin{equation}
                \begin{split}                                 
                    &\max\limits_{\alpha} \hat J(\psi_g(x_0+\alpha), \tilde u) \subseteq \max\limits_{\alpha} \hat J(\psi_g(x_0+\alpha),u) - \mathbb{R}^k \\
                    &\stackrel{\eqref{eq:inv_unc}}{\Leftrightarrow} \max\limits_{\alpha} J(\varphi_u(\psi_g(x_0+\alpha),\cdot),\tilde u) \subseteq 
                    \max\limits_{\alpha} J(\varphi_u(\psi_g(x_0+\alpha),\cdot),u)  - \mathbb{R}^k \\
                    &\stackrel{\eqref{eq:inv_cost},\eqref{eq:inv_Mayer}}{\Leftrightarrow} \max\limits_{\alpha} J(\varphi_u(x_0+\alpha),\tilde u) \subseteq 
                    \max\limits_{\alpha} J(\varphi_u(x_0+\alpha),u)  - \mathbb{R}^k \\
                    &\Leftrightarrow \max\limits_{\alpha} \hat J(x_0+\alpha,\tilde u) \subseteq  \max\limits_{\alpha} \hat J(x_0+\alpha,u)  - \mathbb{R}^k \\
                \end{split}
            \end{equation}            
            This is a contradiction to $u \in \arg\min_{u} \max_{\alpha}\hat J(x_0+\alpha, u)$.  Thus, it follows that the efficient set is invariant under group actions on initial conditions.                        
        \end{proof}
    \end{theorem}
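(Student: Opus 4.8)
The plan is to reduce the entire statement to a single affine relation between the objective values under the transformed and the untransformed initial condition, valid for every realization of the uncertainty, and then to push this relation through the two nested operations of the robust problem \eqref{eq:usmocp}. First I would note that because $x_0 + \alpha \in \mathbb{R}^{n_x}$ for every $\alpha \in \mathcal{Z}$, hypothesis~1 applies with $x_0$ replaced by $x_0 + \alpha$, which is exactly the commutation \eqref{eq:inv_unc}. Substituting \eqref{eq:inv_unc} into $\hat J(\psi_g(x_0 + \alpha), u)$ and then using \eqref{eq:inv_cost} and \eqref{eq:inv_Mayer} to extract the scaling from both the running-cost integral and the Mayer term yields, for every $u$ and every $\alpha$,
\[
\hat J(\psi_g(x_0 + \alpha), u) = \eta\,\hat J(x_0 + \alpha, u) + c\,\mathbf{1}, \qquad c := \beta(t_e - t_0) + \delta,
\]
with $\mathbf{1}$ the all-ones vector in $\mathbb{R}^k$; the same combination of \eqref{eq:inv_unc} and \eqref{eq:inv_con} gives the exact constraint identity $\hat g_i(\psi_g(x_0 + \alpha), u) = \hat g_i(x_0 + \alpha, u)$. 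Thus the transformed objective is the image of the original under the fixed affine map $T\colon z \mapsto \eta z + c\,\mathbf{1}$, while the transformed constraints coincide pointwise with the original ones.

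Feasibility is then immediate. For any curve $u$ and any $\alpha$ the constraint identity gives $\hat g_i(\psi_g(x_0 + \alpha), u) = \hat g_i(x_0 + \alpha, u)$, so the two suprema over $\alpha \in \mathcal{Z}$ agree and $\sup_\alpha \hat g_i(\psi_g(x_0 + \alpha), u) \leq 0$ holds if and only if $\sup_\alpha \hat g_i(x_0 + \alpha, u) \leq 0$. Being an equality, this identity is symmetric in the two problems, so their robust feasible sets coincide.

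For optimality I would work throughout with the image sets $\hat J_{\mathcal{Z}}(x_0, u) = \{\hat J(x_0 + \alpha, u) : \alpha \in \mathcal{Z}\}$ that underlie the set-based comparison, using that $T$ is an order-preserving bijection of $\mathbb{R}^k$ precisely because $\eta > 0$. The affine relation gives at once that the transformed worst-case set $\{\hat J(\psi_g(x_0 + \alpha), u) : \alpha \in \mathcal{Z}\}$ equals $T(\hat J_{\mathcal{Z}}(x_0, u))$ for every $u$. It then remains to verify that the domination \eqref{eq:sbr} is invariant under $T$: since $\eta > 0$ we have $\eta\,\mathbb{R}^k_\succeq = \mathbb{R}^k_\succeq$, hence $T(B - \mathbb{R}^k_\succeq) = T(B) - \mathbb{R}^k_\succeq$ for any $B \subseteq \mathbb{R}^k$, and as $T$ is bijective, $A \subseteq B - \mathbb{R}^k_\succeq$ if and only if $T(A) \subseteq T(B) - \mathbb{R}^k_\succeq$. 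Taking $A = \hat J_{\mathcal{Z}}(x_0, u')$ and $B = \hat J_{\mathcal{Z}}(x_0, \bar u)$ shows that a curve $u'$ dominates $\bar u$ for the transformed initial condition exactly when it does for the original one; therefore no dominating curve exists in one problem precisely when none exists in the other, the efficient sets coincide, and \eqref{eq:prf} follows.

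The main obstacle is not any single computation but the set-valued character of the robust efficiency: the symbols $\sup_\alpha$ and $\max_\alpha$ in \eqref{eq:usmocp} do not denote a maximum of vectors but abbreviate the comparison of the whole image sets $\hat J_{\mathcal{Z}}$ through \eqref{eq:sbr}, so the argument must be conducted at the level of these sets and their shifts by $-\mathbb{R}^k_\succeq$, and one must confirm that $T$ maps the ordering cone onto itself. This is exactly where the sign of $\eta$ matters: the hypothesis records only $\eta \neq 0$, but monotonicity of $T$ -- and hence preservation of both the worst-case sets and the domination relation -- needs $\eta > 0$, since a negative $\eta$ would interchange the inner maximization with a minimization and reverse \eqref{eq:sbr}. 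I would therefore either strengthen hypothesis~2 to $\eta > 0$ or carry the cone reversal explicitly through the two optimality layers. The common shift $c\,\mathbf{1}$, by contrast, is harmless, as translation commutes with the set difference and leaves $\mathbb{R}^k_\succeq$ unchanged.
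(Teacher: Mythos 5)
Your proposal is correct in substance and takes a genuinely different route from the paper. The paper proves optimality by two nested contradiction arguments: it first assumes the inner maximum is attained, picks $\alpha \in \arg\max_\alpha \hat J(x_0+\alpha,u)$ and a hypothetically better $\tilde\alpha$ for the transformed initial condition, and chains equivalences through \eqref{eq:inv_unc}, \eqref{eq:inv_cost}, \eqref{eq:inv_Mayer} to a contradiction; it then repeats the same pattern for the outer minimization, writing domination as $\max_\alpha \hat J(\cdot,\tilde u) \subseteq \max_\alpha \hat J(\cdot,u) - \mathbb{R}^k$. You instead compress all three invariance hypotheses into the single transport identity $\hat J(\psi_g(x_0+\alpha),u) = \eta\,\hat J(x_0+\alpha,u) + c\,\mathbf{1}$ with $c = \beta(t_e-t_0)+\delta$ (which checks out: the integral of $\beta$ over $[t_0,t_e]$ plus $\delta$), and then show the affine map $T$ carries the image sets $\hat J_{\mathcal{Z}}$ onto the transformed ones and commutes with the shift by $-\mathbb{R}^k_\succeq$. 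This buys two things the paper's argument does not have. First, your argument operates directly on the set-based domination \eqref{eq:sbr} without any attainment assumption, whereas the paper must posit that the inner maximum exists and its notation conflates vector maxima with set inclusions (the displays mixing $\max_\alpha \hat J$ with $\subseteq$ are only meaningful after your reformulation via image sets). Second, your observation about the sign of $\eta$ is a genuine catch: the theorem hypothesizes only $\eta \neq 0$, but both your step $\eta\,\mathbb{R}^k_\succeq = \mathbb{R}^k_\succeq$ and the paper's own equivalence between $J(\varphi_u(\psi_g(\cdot)),u) > J(\varphi_u(\psi_g(\cdot)),u)$ and the untransformed inequality silently require $\eta > 0$; with $\eta < 0$ the order reverses, argmax maps to argmin, and \eqref{eq:prf} fails. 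Strengthening the hypothesis to $\eta > 0$, as you suggest, is the right fix, and your proof is then complete and cleaner than the paper's.
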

    
    By Theorem \ref{thm:sumocp}, efficient sets are valid in multiple situations. Thus, identifying symmetries in the uMOCP allows reducing the search space and the computational effort since one only needs to compute one representative efficient set.
    
    Further, if the group action acts in the same way as the uncertainty, that is, $\psi_g(x_0+\alpha) := x_0 + \alpha + g$, $g \in \mathbb{R}^{n_x}$ such that $\alpha + g \in \mathcal{Z}$ then there exists a relationship with other definitions of robustness for MOCPs. As shown in \cite{ober2018explicit}, the Pareto sets with initial conditions $x_0$ and $\psi_g(x_0)$ are identical for all $g \in G$. This leads to what is known as highly robust efficiency \cite{ide2016robustness}.
    
    \begin{definition}[Highly robust efficiency]
        Given an uncertain multi-objective optimization problem $\mathcal{P}(\mathcal{Z})$, a feasible curve $\bar u \in \mathcal{U}$ is called highly robust efficient for $\mathcal{P}(\mathcal{Z})$ if it is efficient for $\mathcal{P}(\alpha)$ for all $\alpha \in \mathcal{Z}$.
    \end{definition}
    
    This means that any Pareto solution of $\mathcal{P}(\alpha)$ for $\alpha \in \mathcal{Z}$ will remain optimal in the Pareto sense regardless of the presence of uncertainty. Thus, if one is interested in highly robust efficiency and the group action acts in the same way as the uncertainty, it suffices to solve one representative MOCP leading to further computational savings.
    
    % Removing the inner maximization problem
    % Additionally, if the group action acts in the same way as the uncertainty then
    % \begin{equation}
    %     \begin{split}
    %         \arg\min\limits_u\sup_\alpha \hat J(\psi_g(x_0+\alpha), u) 
    %         &\Leftrightarrow \arg\min\limits_u \max\limits_\alpha\hat J(\psi_g(x_0), u) \\
    %         &\Leftrightarrow \arg\min\limits_u \hat J(\psi_g(x_0), u) \\
    %     \end{split}
    % \end{equation}
    % for all $g\in G$. Since, $\sup_\alpha$ would be invariant under the group action $G$ and thus have the same optimal $\alpha$ for all $u$. This means that we can remove the inner maximization problem, which significantly speedups the computations.
    
\section{The Method}    

    There exist two fundamentally different -- namely indirect and direct -- approaches to solve optimal control problems. In a direct approach, a discretization is introduced for both the state and the input,
    by which the optimal control problem is transformed into a high- yet finite-dimensional MOP \cite{logist2010efficient,ober2012solving} such that methods from multi-objective optimization can be used.
    In the following, we present a classification of solution methods for MOPs based on when the decision-maker participates in expressing his/her preferences \cite{hwang1979multiple}.
    
    \begin{itemize}
        \item {\bf A priori:}
        the decision maker has to define the preferences of the objective functions before starting the search \cite{Zadeh63,Bowman76,wierzbicki1980use,das:98}.
        \item {\bf A posteriori:}
        first, an approximation of the entire Pareto set and front is generated. It is then presented to the decision maker, who selects the most preferred compromise according to her/his preferences \cite{dsh:05,deb:01,Coello:07,hernandez2017global}.
        \item {\bf Interactive:}
        both the optimizer and decision maker work progressively. The optimizer produces feasible points and the decision maker provides preference information \cite{miettinen1995interactive,schutze2019pareto}.
    \end{itemize}
    
    In this work, we investigate a hybrid approach. First, we compute an approximation to the efficient set of the family of uMOCP in an offline fashion. Then, at each step of the simulation, an algorithm selects an adequate, efficient feasible curve for the given context with the help of the efficient solutions computed offline. Figure \ref{fig:bd} shows the general flow for the simulation. There exist two main blocks, the first one is the online phase which optimizes the control strategy given the initial conditions (subject to uncertainty). Note that the online phase can make use of the library of solutions computed offline. The second block is the model that receives the optimized control strategy and returns the initial conditions for the next iteration. In the following, we describe in detail the offline and online phases.
    
    \begin{figure}
    	\centering
        \includegraphics[trim={1cm 15.5cm 4cm 1cm},clip,width=.8\textwidth]{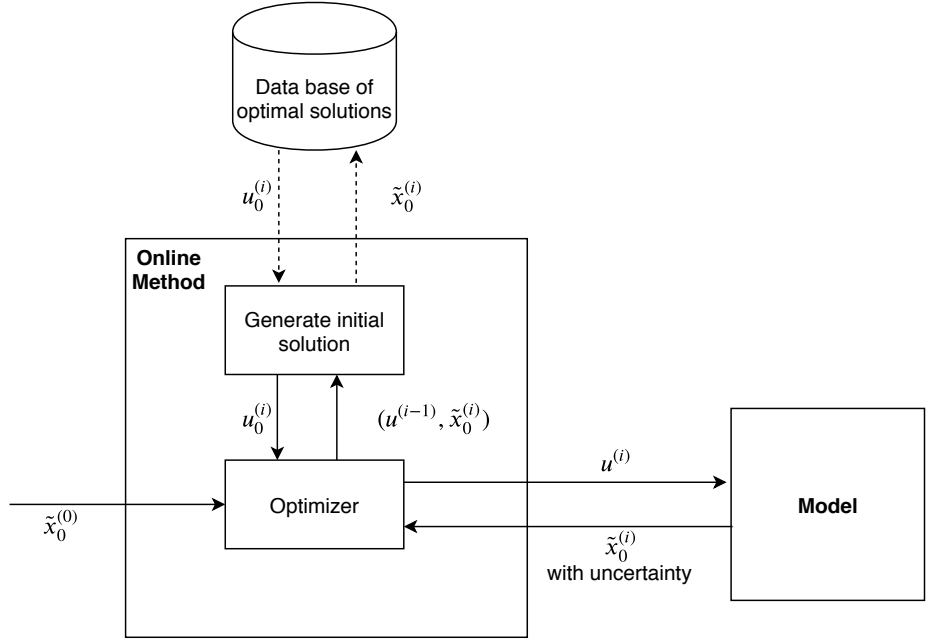}
        \caption{Block diagram of the simulation. Given the initial conditions $\tilde x_0$ (after exploiting the symmetry group $G$) at iteration $i$, the online method approximates the optimal control strategy. This process can be with or without the information from the offline phase. The control strategy is applied to the  model over the control horizon and  returns the initial conditions for the next iteration.}
        \label{fig:bd}
    \end{figure}    
    
    \subsection{Offline phase}
        From the formulation of the uMOCP, it can be observed that the problem is parameterized with respect to the initial value $x_0 \in  \mathcal{Y}$, i.e., we have to solve a so-called multi-objective parametric optimization problem. For this class of problems, a feasible curve $\bar u \in \mathcal{U}$ is called efficient for $\mathcal{P}(\mathcal{Y})$ if it is efficient for $\mathcal{P}(x_0)$ for at least one $x_0 \in \mathcal{Y}$. 
        
        To avoid solving infinitely many uMOCPs, we introduce a discretization of the set $\mathcal{Y}$ and solve the resulting uMOCPs. A potential drawback of the approach is that the number of problems increases exponentially with the state dimension $n_x$. However, by exploiting symmetries in the problem, the number of uMOCPs to solve by be significantly reduced. This reduction is a consequence of the change in the dimension of the initial conditions from $x_0 \in \mathbb{R}^{n_x}$ to $\tilde x_0 \in \mathbb{R}^{\tilde n_x}$ where $\tilde n_x = n_x - dim(G)$.
        
        Algorithm \ref{alg:op} shows the steps to follow to solve the family of uMOCPs. This algorithm follows the framework proposed in \cite{ober2018explicit}.        
        
        \begin{algorithm}
            \caption{Offline phase}
            \label{alg:op}
            \begin{algorithmic}[1]
                \Require Lower and upper bounds $x_{0,\min}, x_{0,\max} \in \mathbb{R}^{n_x}$.
                \State Dimension reduction: decrease dimension of the parameter $x_0 \in \mathbb{R}^{n_x}$, to $\tilde x_0\in \mathbb{R}^{\tilde n_x}$ by exploiting the symmetry group $G$.
                \State Construction of library: create an $\tilde n_x$-dimensional grid $\mathcal{L}$ for the parameter $\tilde x_0$ between $\tilde x_{0,\min}$ and $\tilde x_{0,\max}$ with $\delta_i$ points in the $i^{th}$ direction. This results in $N = \prod_{i=1}^{\tilde n_x} \delta_i$ parameters.
                \State Compute the efficient sets $\mathcal{R}_{\tilde n_x}$ for all $\tilde n_x\in \mathcal{L}$
            \end{algorithmic}
        \end{algorithm}
        
        Algorithm \ref{alg:op} is highly parallelizable. The maximum possible speedup of a parallel program to its sequential counterpart according to Amdahl's law \cite{Amdahl:1967} is given by
        \begin{equation}
            S_{A1}(n_c)=\frac{1}{\frac{1}{n_c}+\frac{1}{n_c}(1-\frac{1}{n_c})},
        \end{equation}
        where $n_c$ denotes the number of cores. This is the case, since typically the number of subprobles is much larger than the number of core available. Thus, the maximum acceleration is be given by the number of cores $n_c$.
        
        In Algorithm \ref{alg:op} many sets of efficient solutions in the sense of minmax robustness of the uMOCP need to be computed. We here use the generic stochastic search algorithm \cite{LTDZ2002b}, see Algorithm \ref{alg:generic_emo}. The algorithm starts by generating a random sample from $\mathbb{R}^{n_u}$ ($P_0$) and then filters those points that are efficient with respect to the set $P_0$. Then, in each iteration $j$, the algorithm generates new feasible points from those in the archive $A_j$ through evolutionary operators \cite{deb:01}. In the next iteration, the new feasible points are again filtered, and the efficient solutions are stored in the archive $A_{j+1}$.
                        
        \begin{algorithm}
            \caption {Generic Stochastic Search Algorithm}
            \label{alg:generic_emo}
            \begin{algorithmic}[1]
            \State $P_0\subset \mathbb{R}^{n_u}$ drawn at random
            \State $A_0 = ArchiveUpdate\mathcal{R}(P_0,\emptyset)$
            \For{$j=0,1,2, \ldots$}
            \State $P_{j+1} = Generate (A_j)$
            \State $A_{j+1} = ArchiveUpdate\mathcal{R} (P_{j+1}, A_j)$
            \EndFor
            \end{algorithmic}
        \end{algorithm}        
        
        The update of the archive $A_j$ is realized using the function $ArchiveUpdate\mathcal{R}$ (cf.~Algorithm \ref{alg:PQe1} for the pseudo code), which computes all efficient solutions in a probabilistic sense.
        %In the following, we present an archiver that aims to compute all efficient solutions in a probabilist sense. Algorithm \ref{alg:PQe1} shows the pseudocode of $ArchiveUpdate\mathcal{R}$. 
        The archiver will go through all candidate points, and it will add a candidate $p \in P$ if there is no solution $a\in A$ in the archiver that dominates $p$. Further, a solution in the archiver will be removed if there is a candidate solution $p$ that dominates $a$. The complexity in terms of the number of the comparisons is given by $O(|A||P|)$.

        \begin{algorithm}
        \caption {$A := ArchiveUpdate\mathcal{R}\; (P, A_0)$}
        \label{alg:PQe1}
        \begin{algorithmic}[1]
        \Require{population $P$, archive $A_0$}
        \Ensure{updated archive $A$}
        \State $A := A_0$
        \ForAll{$p\in P$}
        \If{$\mbox{$\nexists$}a\in A: \hat J_\mathcal{Z}(\tilde x_0, a) \subseteq \hat J_\mathcal{Z}(\tilde x_0, p) - \mathbb{R}^k_\succeq$ } 
        \State $A:=A\cup \{p\}$
        \EndIf
        \ForAll{$a\in A$} 
        \If{$\hat J_\mathcal{Z}(\tilde x_0, p) \subseteq \hat J_\mathcal{Z}(\tilde x_0, a) - \mathbb{R}^k_\succeq$}
        \State $A:=A\backslash\{a\}$
        \EndIf
        \EndFor
        \EndFor
        \end{algorithmic}
        \end{algorithm}                
        
        If we assume Algorithm \ref{alg:generic_emo} as a generator, then it is possible to prove that the archiver does not cycle or deteriorate in their entries \cite{Hanne1999}.
                
        \begin{theorem}
        \label{thm:monotonicity1}
        Let $l\in\mathbb{N}$, $P_1,\ldots,P_l\subset\mathbb{R}^{n_x}$ be finite sets, and $A_i, i=1,\ldots ,l$, be obtained by $ArchiveUpdate\mathcal{R}$ using a generator, and $C_l = \bigcup_{i=1}^l P_i$. Then
        \begin{equation}
        A_l = \{x\in C_l\; : \; \nexists y\in C_l: y\preceq x\}.
        \end{equation}
        \end{theorem}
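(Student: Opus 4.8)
The plan is to prove the identity by induction on $l$, reducing the whole argument to the behaviour of a single call of $\mathrm{ArchiveUpdate}\mathcal{R}$ plus one elementary property of the set-based relation. Throughout I would write $a\prec b$ for the strict set-based dominance read off the archiver's test, i.e. $\hat J_\mathcal{Z}(\tilde x_0,a)\subseteq \hat J_\mathcal{Z}(\tilde x_0,b)-\mathbb{R}^k_\succeq$ while $\hat J_\mathcal{Z}(\tilde x_0,b)\not\subseteq \hat J_\mathcal{Z}(\tilde x_0,a)-\mathbb{R}^k_\succeq$, and set $\mathrm{ND}(S)=\{x\in S:\nexists\, y\in S,\ y\prec x\}$ for finite $S$, so that the claim reads $A_l=\mathrm{ND}(C_l)$. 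The one algebraic fact I would isolate at the outset is transitivity of the underlying preorder: if $\hat J_\mathcal{Z}(\tilde x_0,a)\subseteq \hat J_\mathcal{Z}(\tilde x_0,b)-\mathbb{R}^k_\succeq$ and $\hat J_\mathcal{Z}(\tilde x_0,b)\subseteq \hat J_\mathcal{Z}(\tilde x_0,c)-\mathbb{R}^k_\succeq$, then, since the cone obeys $\mathbb{R}^k_\succeq+\mathbb{R}^k_\succeq=\mathbb{R}^k_\succeq$, one gets $\hat J_\mathcal{Z}(\tilde x_0,a)\subseteq \hat J_\mathcal{Z}(\tilde x_0,c)-\mathbb{R}^k_\succeq$. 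This transitivity is what lets dominance \emph{propagate} through the archive and is used at essentially every step.

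The heart is a single-step lemma: if the incoming archive $A_0$ is an antichain, then one call returns $\mathrm{AU}(P,A_0)=\mathrm{ND}(A_0\cup P)$. Granting this, the induction is immediate. With $A_0=\emptyset=\mathrm{ND}(\emptyset)$ an antichain, the lemma gives $A_1=\mathrm{ND}(P_1)=\mathrm{ND}(C_1)$, which is again an antichain; and assuming $A_{l-1}=\mathrm{ND}(C_{l-1})$, the lemma yields $A_l=\mathrm{ND}\big(\mathrm{ND}(C_{l-1})\cup P_l\big)$. To finish I would prove the filtration identity $\mathrm{ND}(\mathrm{ND}(S)\cup T)=\mathrm{ND}(S\cup T)$ for finite $S,T$: any $s\in S\setminus\mathrm{ND}(S)$ is dominated inside $S$, hence by transitivity dominated by some non-dominated representative of $S$, and therefore dominated inside $S\cup T$ as well, so discarding it before forming the union changes neither side. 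Applied with $S=C_{l-1}$, $T=P_l$, this collapses the right-hand side to $\mathrm{ND}(C_{l-1}\cup P_l)=\mathrm{ND}(C_l)$ and closes the induction.

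For the single-step lemma I would maintain the loop invariant that, after the element-loop has processed a prefix $Q\subseteq P$, the current archive equals $\mathrm{ND}(A_0\cup Q)$. Completeness is the easy direction: a point that is non-dominated in $A_0\cup P$ is never deleted, because deletion of some $a$ is triggered only by a processed $p\prec a$, and it is inserted when examined, since at that moment no archive element dominates it — if one did, transitivity would manufacture a dominator inside $A_0\cup P$, contradicting non-dominance. Soundness, that nothing dominated survives, is where the interleaving of the insertion and pruning phases must be handled with care: I must argue that whenever a dominated point is momentarily present, the element that dominates it is eventually processed and the inner removal loop deletes it, and that a freshly inserted $p$ removes, in the same iteration, every archive member it dominates.

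The step I expect to be the main obstacle is precisely this bookkeeping combined with the fact that the set-based relation is only a \emph{preorder}, not a partial order: distinct controls may have identical image sets and hence dominate one another weakly. Under the literal reflexive reading of the subset test an inserted $p$ would even delete itself, so I would first fix the convention that the tests are strict (the inner loop excludes $a=p$, and $\prec$ takes the strict part of the preorder), and then show that for each class of mutually non-dominating ties the archive retains exactly one representative — consistent with the right-hand side, which, being phrased with $\preceq$, discards every member of a non-trivial tie. Establishing acyclicity of $\prec$ (needed so that ``dominated implies dominated by a non-dominated element'' in the filtration identity) and making the invariant robust both to these ties and to the enumeration order of the population is the delicate part. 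Once the invariant holds, the no-cycling/no-deterioration assertion attributed to Hanne follows at once, since $\mathrm{ND}(A_0\cup Q)$ grows monotonically with the seen set and never re-admits a previously dominated point.
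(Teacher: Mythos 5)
Your proposal is correct in substance, and it is organized quite differently from the paper's own proof. The paper argues in a single pass: it writes $C_l=\{u_1,\ldots,u_m\}$ in the order the points are processed (thereby flattening all $l$ calls of $ArchiveUpdate\mathcal{R}$ into one sequence) and disposes of both inclusions element by element --- if $u\in A_l$ it was added at its step and never discarded (lines 3 and 7 of the algorithm); if $u\notin A_l$ a dominating point must exist --- with no induction, no auxiliary lemma, and, notably, no mention of transitivity. You instead factor the claim into (i) transitivity of the dominance relation via the cone identity $\mathbb{R}^k_\succeq+\mathbb{R}^k_\succeq=\mathbb{R}^k_\succeq$, (ii) a single-call lemma $\mathrm{AU}(P,A_0)=\mathrm{ND}(A_0\cup P)$ for antichain input, proved by a loop invariant, and (iii) the filtration identity $\mathrm{ND}(\mathrm{ND}(S)\cup T)=\mathrm{ND}(S\cup T)$, closing by induction on $l$. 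Your route makes rigorous exactly what the paper's sketch glosses over: the dominator of a discarded (or blocked) point may itself be discarded later, so the paper's per-element assertions are only valid because the strict part of the preorder is transitive and irreflexive, whence in a finite set every dominated point is dominated by a \emph{surviving} nondominated one. Your invariant and filtration identity are the explicit form of this chase, and your subcase analysis of the insertion/removal interleaving (a non-added $p$ can never trigger a removal, since a dominator of $p$ in the current archive would transitively dominate the removed element) is correct and absent from the paper.

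One place your write-up wobbles is the tie discussion: ``the archive retains exactly one representative --- consistent with the right-hand side, which \ldots{} discards every member'' is self-contradictory as phrased (one representative is not zero). The clean resolution is the one you in fact propose in the same sentence: read both the archiver's subset tests and the theorem's $\preceq$ as the strict part of the preorder. Then every member of a class of mutually non-strictly-dominating points is retained on \emph{both} sides and the stated equality survives with no single-representative bookkeeping; your own subcase analysis already confirms that under strict tests a newly inserted $p$ neither blocks nor removes a tied archive member. Since the paper sidesteps the issue entirely --- its statement uses an undefined $\preceq$ and its proof never confronts reflexivity of the subset test --- this is a caveat about your exposition in a degenerate corner case, not a gap relative to the paper; with the strict-reading convention fixed once at the start, your proof plan goes through completely.
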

        \begin{proof}
            Let $C_l = \{u_1,\ldots,u_m\}$, where the $u_i$'s are considered by the archiver in this order. Let $u\in C_l$, then there exists an index $j\in\{1,\ldots,m\}$ such that $u=u_j$. First, let $u\in A_l$. Then $u$ will be added to the archive in the $j$-th step and not discarded further on (line 3 respectively line 7 of Algorithm \ref{alg:PQe1}). Next, let $u\not\in A_l$. That is, there exists a point $u_i\in P_{C_l}$ such that $\hat J(\tilde x_0+\alpha,u_i)\subseteq \hat J(\tilde x_0+\alpha,u) - \mathbb{R}^{k}$. Hence, $u$ is either not added to the archive, or discarded if added previously.
        \end{proof}

        Next, we investigate the limit behavior of the sequence $A_i$ of archives. To guarantee convergence, we have to assume the following (see also \cite{schuetze_ecj:10,schutze2019pareto}): 
        \begin{equation}
        \label{eq:P=1} 
        \forall u\in U \;\mbox{and}\; \forall \delta > 0:
        \quad
        P\left(\exists l\in\mathbb{N}\; : \; P_l\cap B_\delta(u)\cap U\neq 
        \emptyset\right) = 1,
        \end{equation}
        where $P(A)$ denotes the probability for event $A$ and $B_\delta(u)$ the $n$-dimensional sphere with center $u$ and radius $\delta$. The following theorem shows that the sequence of archives converges under these conditions with probability one to $\mathcal{R}$ in the Hausdorff sense (see Definition \ref{def:hd}). 

        \begin{theorem}
        \label{thm:convergence}
        Let Problem (\ref{eq:smocp}) be given, where $\hat J$ is continuous and $\mathbb{R}^{n_x}$ is compact. Further, let Assumption (\ref{eq:P=1}) be fulfilled. Then, using a generator where $ArchiveUpdate\mathcal{R}$ is used to update the archive, leads to a sequence of archives $A_l, l\in\mathbb{N}$, with
        \begin{equation}
        \lim_{l\to\infty} d_H(\mathcal{R},A_l) = 0,\quad \mbox{with probability one.}
        \end{equation}
        \end{theorem}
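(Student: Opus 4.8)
The plan is to prove the two-sided Hausdorff convergence by bounding each of the two one-sided distances separately, exploiting the exact description of the archive supplied by Theorem \ref{thm:monotonicity1} together with the density furnished by Assumption \eqref{eq:P=1}. Writing
\[
  d_H(\mathcal{R}, A_l) = \max\Big\{\, \sup_{a \in A_l}\mathrm{dist}(a,\mathcal{R}),\ \sup_{r \in \mathcal{R}}\mathrm{dist}(r,A_l) \,\Big\},
\]
it suffices to show that both terms tend to zero. I would first pass to a probability-one event on which the remaining argument is deterministic: fixing a countable dense subset $\{u_m\}\subseteq U$ and applying \eqref{eq:P=1} to every ball $B_{1/s}(u_m)$, a countable intersection of probability-one events shows that almost surely $C_\infty := \bigcup_{i}P_i$ is dense in $U$. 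By Theorem \ref{thm:monotonicity1}, $A_l$ is precisely the set of non-dominated elements of $C_l = \bigcup_{i\le l}P_i$, and $C_l$ increases to $C_\infty$, so the task reduces to a purely deterministic convergence statement about non-dominated sets of an exhausting dense sequence.

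For the covering direction $\sup_{r\in\mathcal{R}}\mathrm{dist}(r,A_l)\to 0$, fix $r\in\mathcal{R}$ and $\epsilon>0$. By density there is, for all large $l$, a generated point $c\in C_l$ with $\|c-r\|<\epsilon$. If $c$ is non-dominated within $C_l$, then $c\in A_l$ by Theorem \ref{thm:monotonicity1} and $\mathrm{dist}(r,A_l)<\epsilon$. The delicate case is when $c$ is robustly dominated by some $c'\in C_l$, i.e. $\hat J_\mathcal{Z}(\tilde x_0,c')\subseteq \hat J_\mathcal{Z}(\tilde x_0,c)-\mathbb{R}^k_\succeq$: continuity of $\hat J_\mathcal{Z}$ and efficiency of $r$ force the image $\hat J_\mathcal{Z}(\tilde x_0,c')$ to lie close to $\hat J_\mathcal{Z}(\tilde x_0,r)$, and following the finite chain of dominators inside the finite set $C_l$ terminates at an archive member with the same property. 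This yields closeness in \emph{objective} space; promoting it to closeness of the preimage in \emph{decision} space requires a properness/regularity link between the two spaces. A finite-subcover argument over the compact set $\mathcal{R}$ would then make the bound uniform in $r$.

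For the precision direction $\sup_{a\in A_l}\mathrm{dist}(a,\mathcal{R})\to 0$, I would argue by contradiction. If it fails, compactness of $U$ yields a subsequence of archive points $a_{l_j}\in A_{l_j}$ with $a_{l_j}\to a^{*}$ and $\mathrm{dist}(a^{*},\mathcal{R})\ge\epsilon$; hence $a^{*}$ is not efficient and, by external stability of the compact continuous problem, is strictly dominated by some $r^{*}\in\mathcal{R}$. By density there is $c\in C_l$ near $r^{*}$, and continuity of $\hat J_\mathcal{Z}$ makes this $c$ robustly dominate every $a_{l_j}$ with $j$ large. This contradicts $a_{l_j}\in A_{l_j}$ being non-dominated in $C_{l_j}$ (Theorem \ref{thm:monotonicity1}). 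Note that here the dominator $c$ need not be close to $a^{*}$, so this direction does not suffer from the decision/objective-space mismatch of the covering step.

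I expect the genuine obstacle to be twofold. First, the whole classical point-valued template must be transported to the \emph{set-valued} robust domination relation \eqref{eq:sbr}: every continuity step above has to be phrased for the map $u\mapsto \hat J_\mathcal{Z}(\tilde x_0,u)=\{\hat J(\tilde x_0+\alpha,u):\alpha\in\mathcal{Z}\}$, so I would first need to establish that, under continuity of $\hat J$ and compactness of $\mathcal{Z}$, this map is continuous in the Hausdorff metric on its images, that the induced order is acyclic (so the chains above terminate), and that \emph{strict} set-containment domination is stable under small perturbations of the decision variable. Second, and most subtly, the covering direction as literally stated is a decision-space claim, whereas the density/continuity machinery naturally controls objective-space distances; bridging this gap needs a properness condition ensuring that preimages of nearby efficient objective values stay nearby, together with the distinction between weak and strict efficiency near the boundary of $\mathcal{R}$. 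Once these set-valued continuity and stability facts, and the properness link, are secured, the density and monotonicity ingredients combine routinely.
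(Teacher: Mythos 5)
Your skeleton coincides with the paper's: split $d_H(\mathcal{R},A_l)$ into the two semi-distances, use Theorem \ref{thm:monotonicity1} to identify $A_l$ with the non-dominated subset of $C_l=\bigcup_{i\le l}P_i$, and use \eqref{eq:P=1} for density (your countable-intersection construction of a single probability-one event is in fact tidier than the paper, which invokes \eqref{eq:P=1} pointwise). Your \emph{precision} direction, $\sup_{a\in A_l}\mathrm{dist}(a,\mathcal{R})\to 0$, matches the paper's first step: the paper argues that any $u\notin\mathcal{R}$ admits a robust dominator $p\in\mathcal{R}$, that continuity gives a whole neighborhood of dominating points, and that \eqref{eq:P=1} eventually places a sample there, evicting $u$ forever; your compactness/contradiction variant additionally yields uniformity and is sound modulo the same two facts the paper also uses silently, namely external stability of the robust-efficient set (every non-efficient point is dominated by an efficient one) and stability of strict set-containment domination under small perturbation of the dominator.

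The genuine gap is in your \emph{covering} direction: you never prove $\mathrm{dist}(\mathcal{R},A_l)\to 0$, but defer the decisive step to an unnamed ``properness/regularity link'' between objective and decision space and explicitly leave it unsecured, so the proposal as written does not establish the theorem. The paper closes this half by a different move: it takes sampled points $p_i\in P_{l_i}\cap B_{1/i}(\bar p)$ converging to $\bar p\in\mathcal{R}$ and asserts that $p_i\in\mathcal{R}$ for all $i\ge i_0$, whence these points enter the archive and are never discarded. Note, however, that this assertion --- every point sufficiently close to an efficient point is itself efficient --- is exactly the local-absorption property you could not supply, and it fails in general at boundary points of $\mathcal{R}$: already for $k=2$ and $J(u)=\left(u^2,(u-1)^2\right)$ on $[-1,2]$ the efficient set is $[0,1]$, and every ball around $\bar p=0$ contains dominated points, which the archiver will eventually evict. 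Without such a hypothesis (or a properness condition tying preimages of nearby front values to nearby decision points), the density-plus-continuity machinery only controls the images: a dominator that evicts $p_i$ must map Hausdorff-close to $\hat J_\mathcal{Z}(\tilde x_0,\bar p)$ but may lie far from $\bar p$ in decision space, so one obtains convergence of $\hat J_\mathcal{Z}(A_l)$ in objective space rather than the stated decision-space claim. In short: your diagnosis of where the difficulty sits is exactly right and your precision half is fine, but the covering half is missing from your write-up --- and it is precisely the step the paper's own proof assumes without justification.
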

        \begin{proof}
            First, we show that $dist(A_l,\mathcal{R}))\to 0$ with probability one for $l\to\infty$. We have to show that every $u\in \mathbb{R}^{n_x} \backslash \mathcal{R}$ will be discarded (if added before) from the archive after finitely many steps, and that this point will never be added further on. Let $u\in \mathbb{R}^{n_x}\backslash \mathcal{R}$. Since there exists $p\in \mathcal{R}$ with $\hat J(\tilde x_0,p) \subseteq \hat J(\tilde x_0,u) - \mathbb{R}^k$. Further, since $\hat J$ is continuous there exists a neighborhood $U$  of $u$ with $\hat J(\tilde x_0,p) \subseteq \hat J(\tilde x_0,u)$, $\forall u\in U$. By assumption, there exists a number $l_0\in\mathbb{N}$ such that there exists $u_{l_0}\in P_{l_0}\cap \mathbb{R}^{n_x}$. Thus, by construction of $ArchiveUpdate\mathcal{R}$, the point $u$ will be discarded from the archive if it is a member of $A_{l_0}$, and never be added to the archive further on.
            
            Now we consider the limit behavior of $dist(\mathcal{R},A_l)$. Let $\bar{p}\in \mathcal{R}$. For $i\in\mathbb{N}$ there exists by assumption, a number $l_i$ and a point $p_i\in P_{l_i}\cap B_{1/i}(\bar{p})\cap \mathbb{R}^{n_x}$, where $B_\delta(p)$ denotes the open ball with center $p$ and radius $\delta\in\mathbb{R}_+$. Since $\lim_{i\to\infty} p_i = \bar{p}$ and since $\bar{p}\in \mathcal{R}$ there exists an $i_0\in\mathbb{N}$ such that $p_i\in \mathcal{R}$ for all $i\geq i_0$. By construction of $ArchiveUpdate\mathcal{R}$, all the points $p_i, i\geq i_0$, will be added to the archive (and never discarded further on). Thus, $dist(\bar{p},A_l) \to 0$ for $l\to\infty$.
        \end{proof}                
        
        We now investigate the performance of Algorithm \ref{alg:op} using the following example problem with $J:\mathbb{R} \times \mathbb{R}^2 \rightarrow \mathbb{R}^2$: 
        \begin{equation}
            \label{eq:witting}
            %\min\limits_{u} J(\alpha,u) = 
                \min\limits_{u}
                \begin{pmatrix}
                    \frac{1}{2}(\sqrt{1+(u_1+u_2)^2} + \sqrt{1+(u_1-u_2)^2} + u_1 - u_2) + \alpha e^{-(u_1-u_2)^2} \\
                    \frac{1}{2}(\sqrt{1+(u_1+u_2)^2} + \sqrt{1+(u_1-u_2)^2} - u_1 + u_2) + \alpha e^{-(u_1-u_2)^2}               
                \end{pmatrix},
        \end{equation}        
        where $-2 \leq u_1,u_2 \leq 2$ and $0.1\leq \alpha \leq 1.5$. Note that for $0 < \alpha < 1$ the problem's Pareto set is invariant under translations in $\alpha$ \cite{ober2018explicit}. Thus, the solutions in the Pareto set are highly robust efficient as well as set-based minmax robust efficient \cite{ide2016robustness}.
        However, this is not longer the case for $1\leq \alpha \leq 1.5$. Figure \ref{fig:exWit} shows the efficient set and its image for different values of $\alpha$.
        
        \begin{figure}
        	\centering
            \includegraphics[width=.4\textwidth]{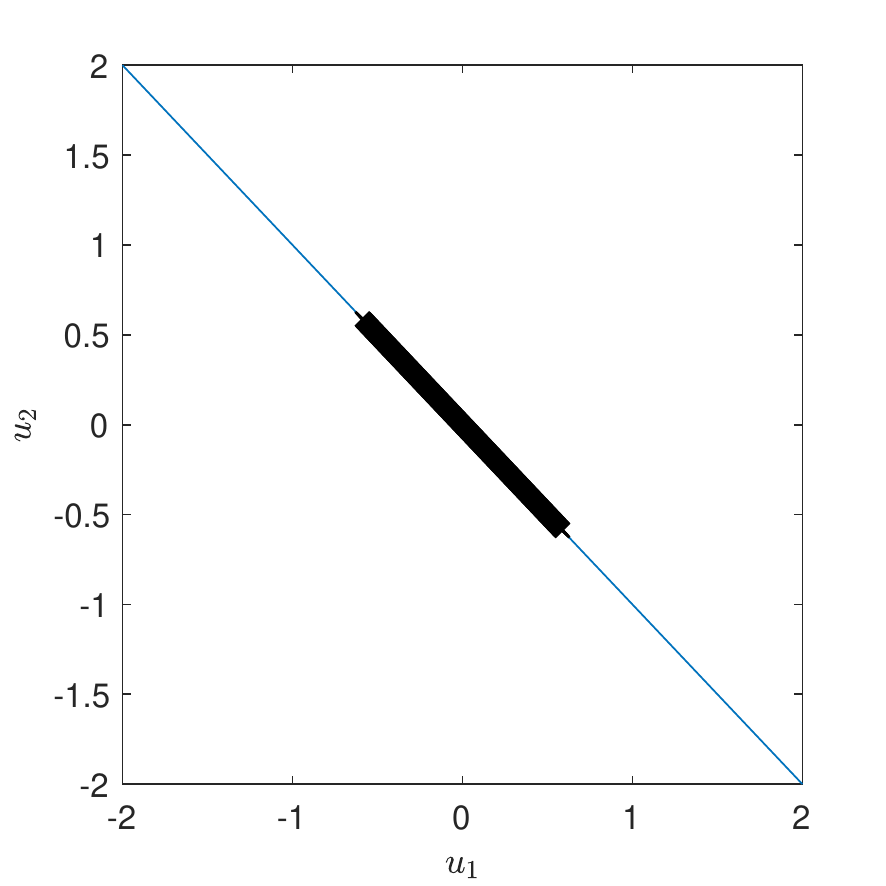} \hfil
            \includegraphics[width=.4\textwidth]{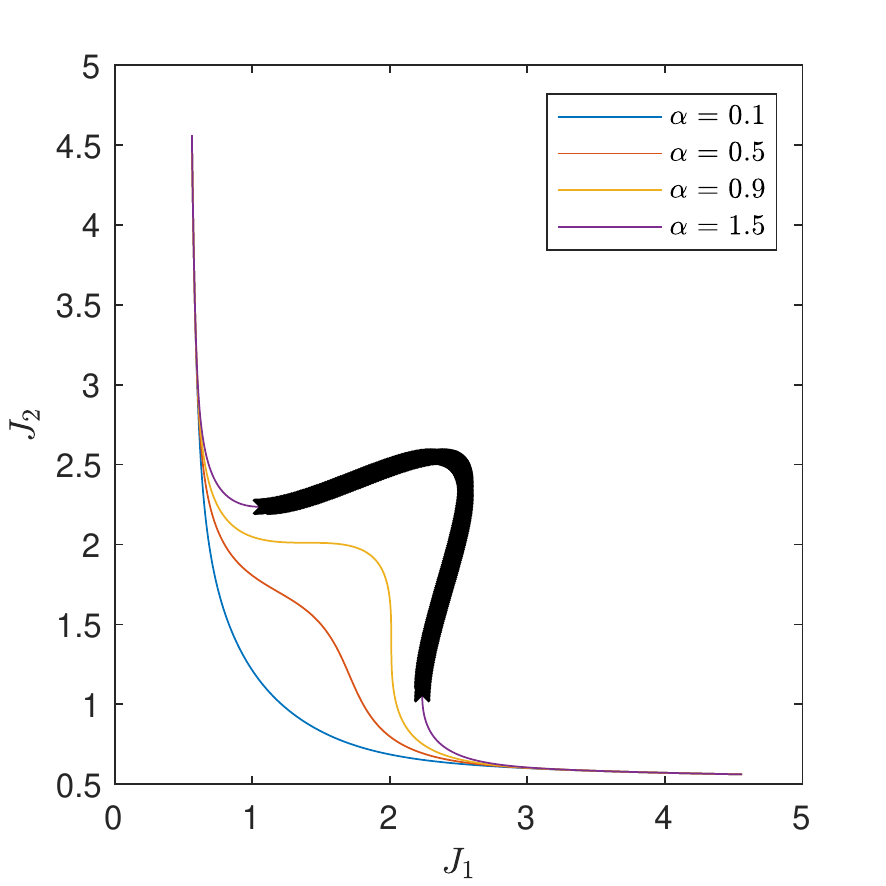}
            \caption{Invariant Pareto set (left) and its image for Problem \eqref{eq:witting} with $\alpha \in (0.1, 0.5, 0.9)$. Note that for $\alpha = 1.5$ the region in bold it is no longer optimal.}
            \label{fig:exWit}
        \end{figure}
    
		\begin{figure}
			\centering
			\begin{subfigure}[t]{.40\textwidth}                         
				\includegraphics[width=\textwidth]{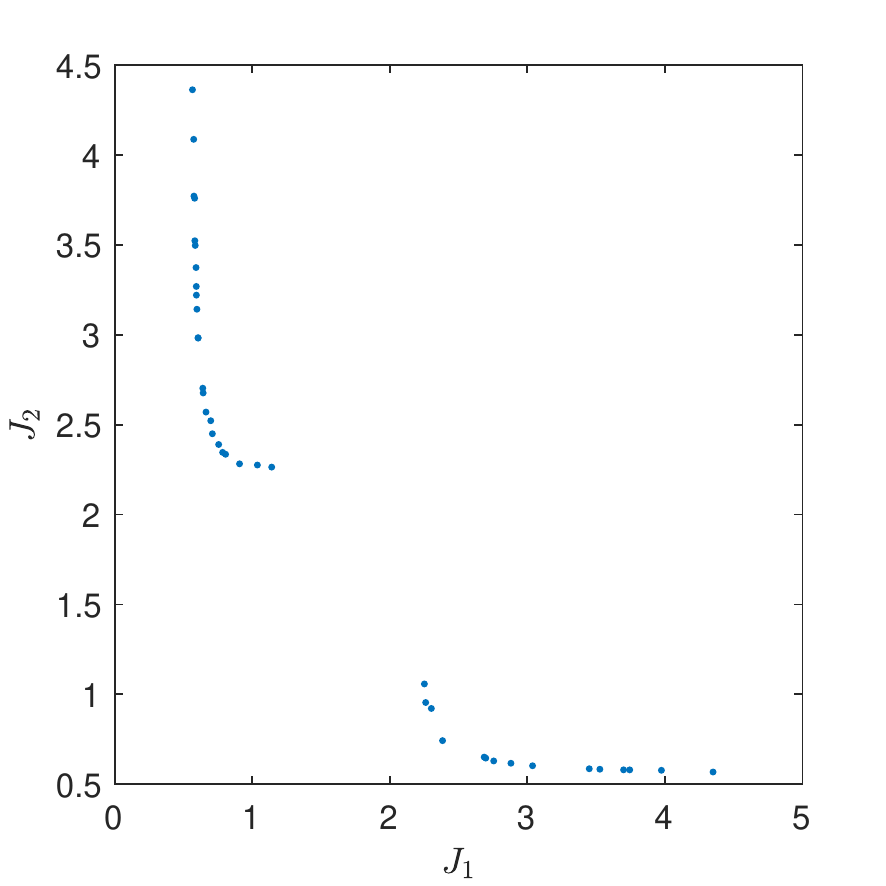}
				\caption{$500$ iterations}
				\label{fig:exA1}
			\end{subfigure}\hfil
			\begin{subfigure}[t]{.40\textwidth}
				\includegraphics[width=\textwidth]{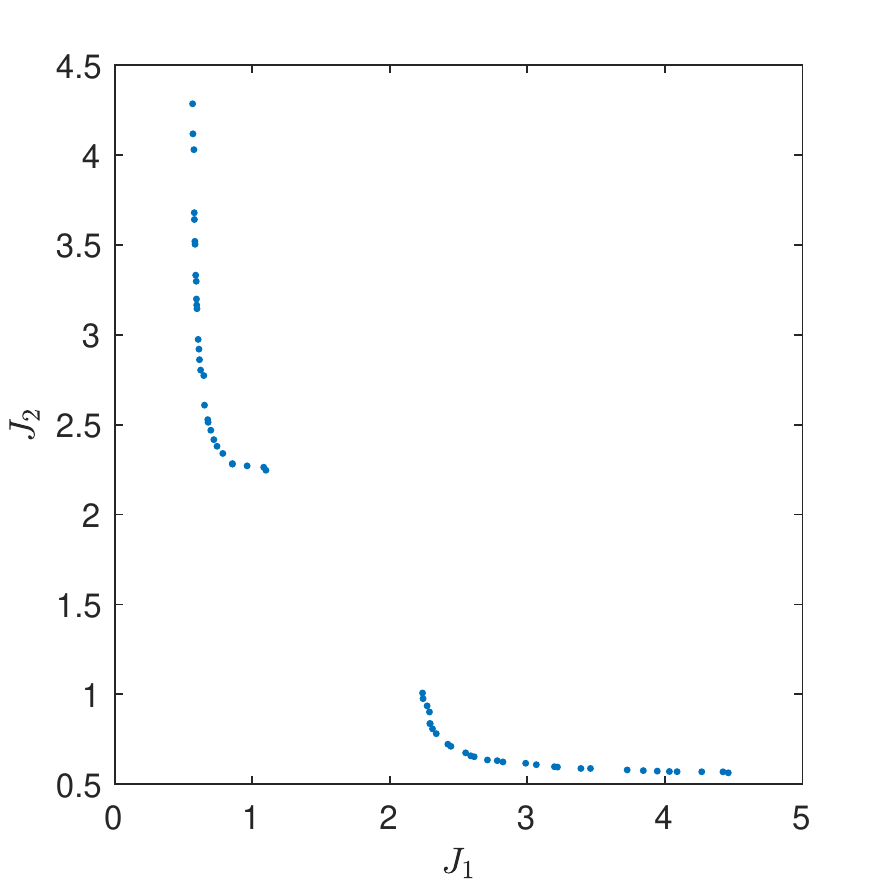}        
				\caption{$1,000$ iterations}
				\label{fig:exA2}
			\end{subfigure}\\
			\begin{subfigure}[t]{.40\textwidth}
				\includegraphics[width=\textwidth]{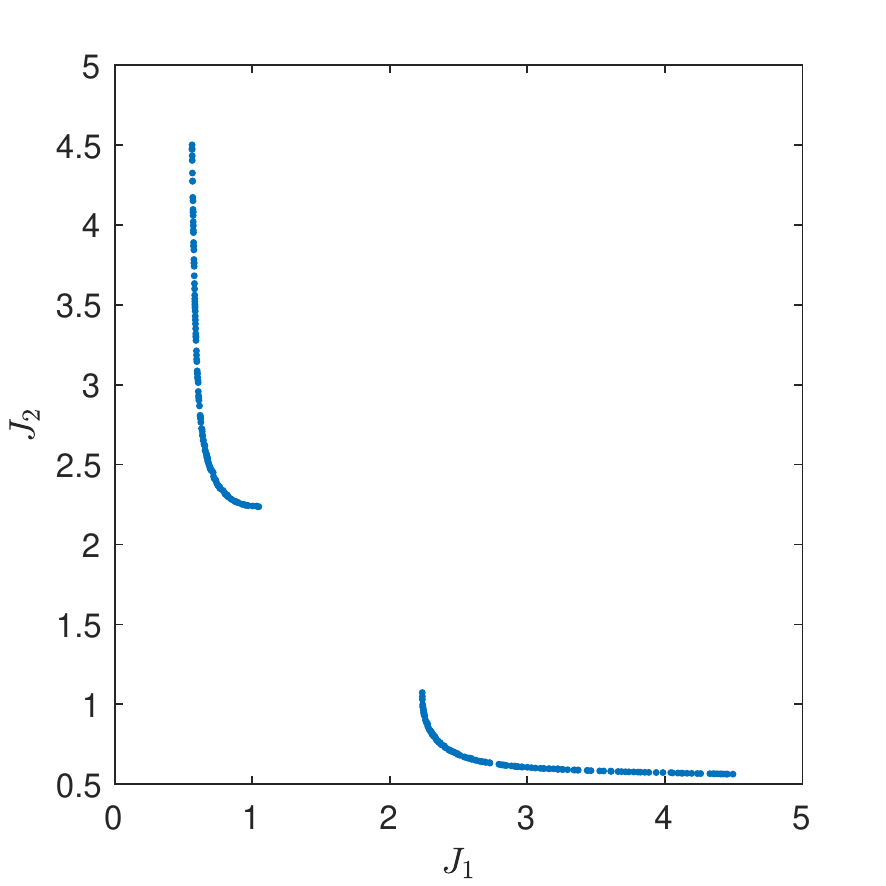}
				\caption{$10,000$ iterations}
				\label{fig:exA3}
			\end{subfigure}\hfil
			\begin{subfigure}[t]{.40\textwidth}
				\includegraphics[width=\textwidth]{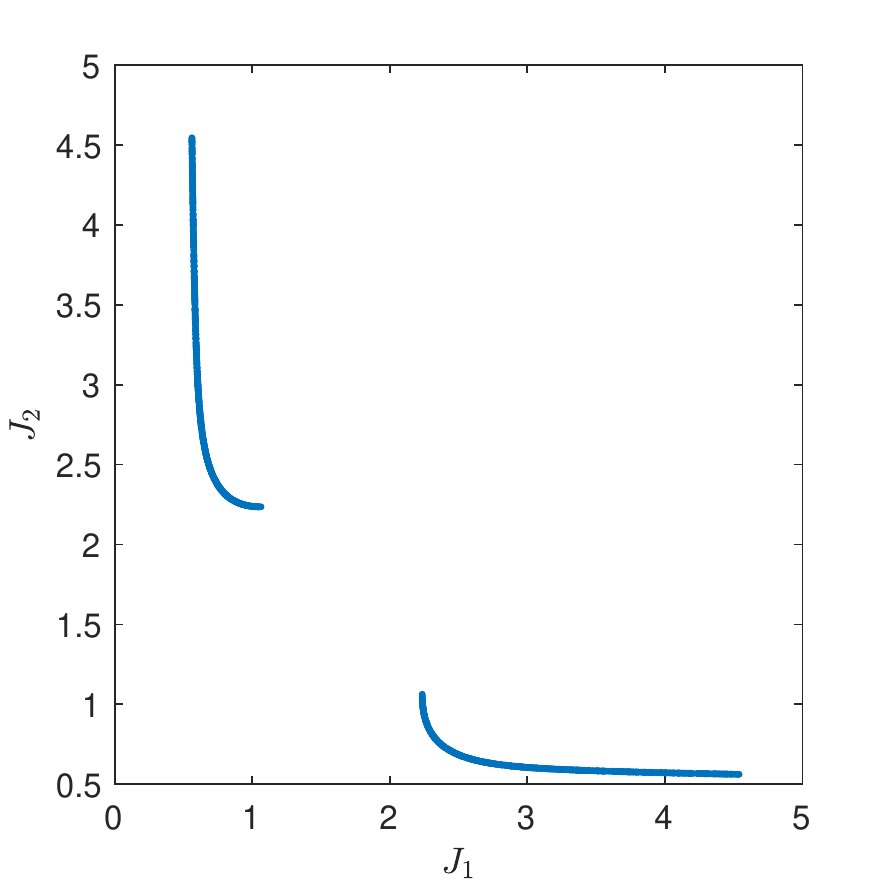}
				\caption{$100,000$ iterations}
				\label{fig:exA4}
			\end{subfigure}
			\caption{Approximations of the efficient set for Problem \eqref{eq:witting} with different number of random samples.}
			\label{fig:exA}
		\end{figure}

        To solve this problem, we applied Algorithm \ref{alg:op} on $30$ archives of $100,000$ uniform random feasible points each. Further, we measured the quality of the feasible points in the archiver in terms of the $\Delta_2$ indicator \cite{SELC12}. This indicator measures the distance between a reference set and an approximation using averaged Hausdorff distance with the $L_2$ norm. The quality was measured after $500$, $1,000$, $10,000$ and $100,000$ iterations. Figure~\ref{fig:exA} shows the result of applying  Algorithm~\ref{alg:PQe1} to Problem \eqref{eq:witting}. Figure~\ref{fig:exAdp} shows the convergence of the method for the $\Delta_2$ indicator. For all cases, we show the median. From the results, we can observe that the archiver can keep efficient solutions and that the method converges as the number of samples increases.

        \begin{figure}
        	\centering
            \begin{subfigure}[t]{.45\textwidth}                         
                \includegraphics[width=\textwidth]{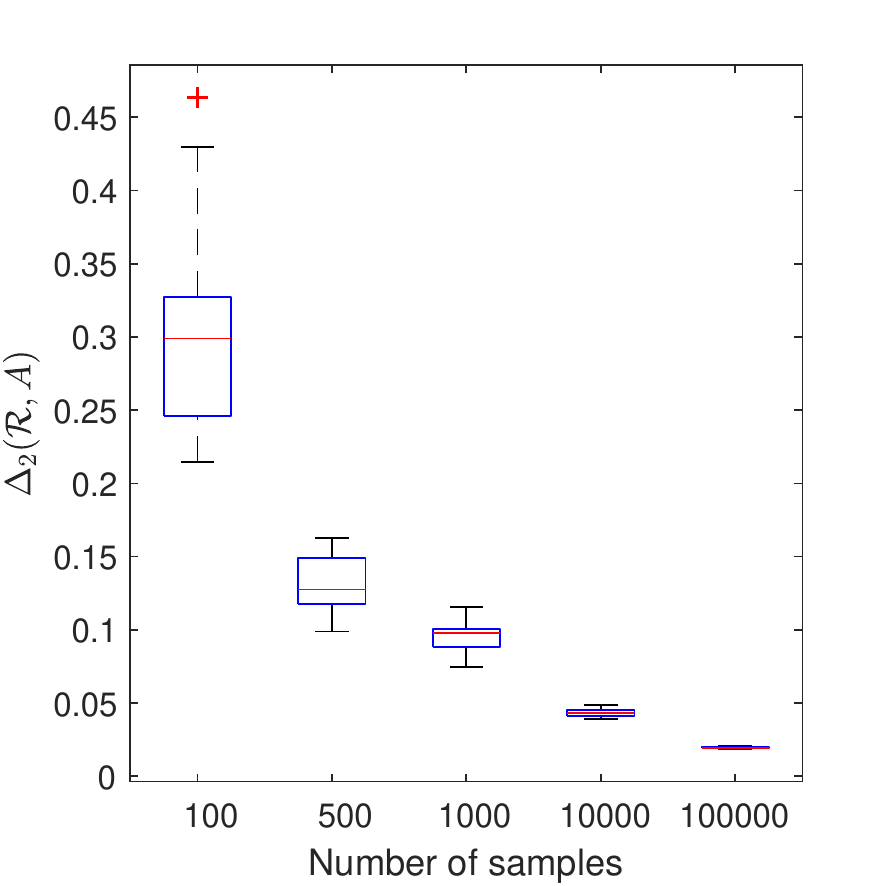}
                \caption{$\Delta_2(\mathcal{R},A)$}
                \label{fig:exAdp1}
            \end{subfigure}\hfil
            \begin{subfigure}[t]{.45\textwidth}
                \includegraphics[width=\textwidth]{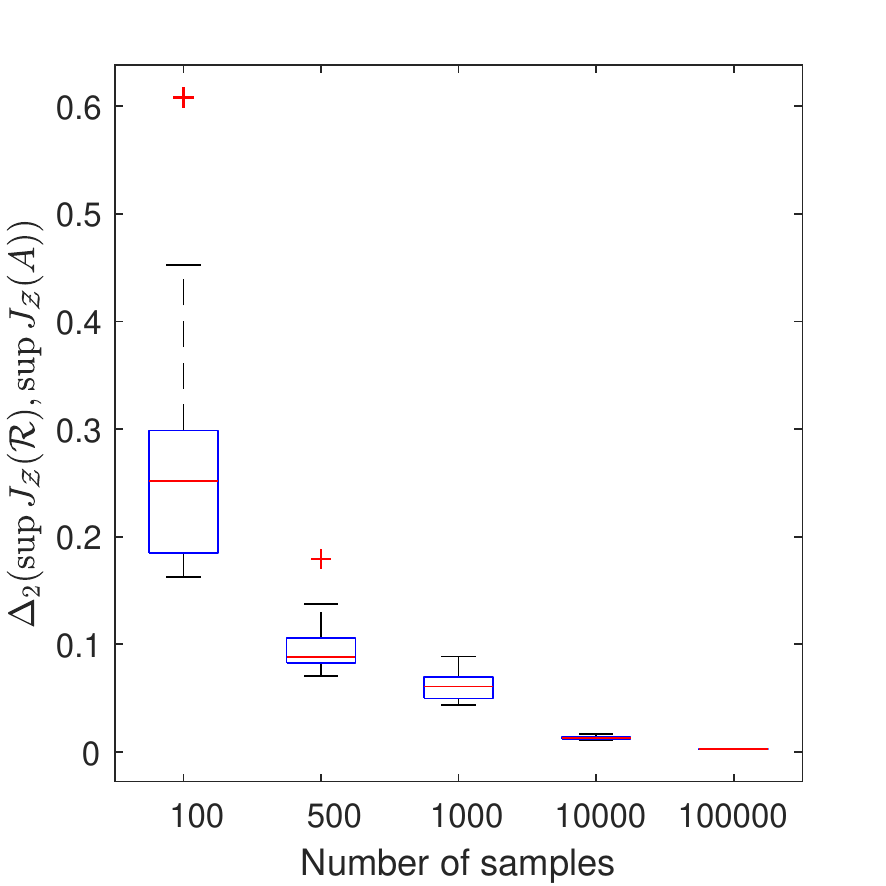}        
                \caption{$\Delta_2(\sup J_\mathcal{Z}(\mathcal{R}), \sup J_\mathcal{Z}(A))$}
                \label{fig:exAdp2}
            \end{subfigure}
            \caption{Convergence of the averaged Hausdorff distance between the approximation found by the archiver ($A$) and a discretization of the efficient set ($\mathcal{R}$) in decision space (left) and objective space (right).}
            \label{fig:exAdp}
        \end{figure}
        
    \subsection{Online phase}
        The online phase consists of two steps. First, we consider the approach proposed in \cite{Joh02,BF06,ober2018explicit} to obtain a promising initial feasible control input (We do not select a state trajectory from the library but only an input control trajectory $u$, as the state is generated by the system which is running in parallel) by exploiting the library generated in the offline phase. 
        Next, starting from this solution, we compute a feasible optimal compromise (i.e., a solution that complies with the constraints) using a \emph{reference point method} \cite{wierzbicki1980use,POBD19}, and apply it to the system.
        
        In the first step, the task is to identify the current values for $\tilde x_0$. We then select the corresponding efficient set $\mathcal{R}_{\tilde x_0}$ from the library. 
        In case the current initial condition is not contained in the library (which occurs with probability one), we use interpolation between different entries, cf.~Algorithm~\ref{alg:online}.
        Next, given a preference of the decision-maker $\rho\in [0,1]^k$ s.t. $\sum_{i=1}^k \rho_i = 1$, an efficient feasible curve is chosen and used as an initial condition for the second step. 
        Therein, we transform the uMOCP to a single-objective optimal control problem with uncertainty by means of a reference point formulation \cite{wierzbicki1980use}. 
        In this method, the distance to an infeasible \emph{reference point} $Z \in \mathbb{R}^k$ in image space is minimized, which leads to an efficient point. Hence, 
        the decision-maker has an influence on the performance that should be achieved, as the``closest'' realization to the reference point $Z$ will be selected. Formally, the problem can be stated as: 
        \begin{equation}
            \min_{u\in \mathcal{X}} dist\left(\max_{\alpha\in \mathcal{Z}} \{\hat J(\tilde x_0+\alpha,u)\}, Z\right),
            \label{eq:rhd}
        \end{equation}
        where $Z \in \mathbb{R}^k$ and $dist$ is a distance between a set and a vector. For this purpose, we use the Hausdorff distance.        
        \begin{definition}[Hausdorff distance]
            Let $u, v \in \mathbb{R}^n$ and $A,B \subset \mathbb{R}^n$. The maximum norm distance $d_\infty$, the semi-distance $dist(\cdot, \cdot)$ and the Hausdorff distance $d_H(\cdot, \cdot)$ are defined as follows:
            \begin{enumerate}
                \item $d_\infty = \max \limits_{i=1,\ldots,n} \left|u_i-v_i\right|$ 
                \item $\text{dist}(u,A) = \inf \limits_{v\in A} d_\infty(u,v)$
                \item $\text{dist}(B,A) = \sup \limits _{u\in B} d_\infty (u, A)$
                \item $d_H(A,B) = \max \{ \text{dist}(A,B), \text{dist}(B,A) \}$
            \end{enumerate}
            \label{def:hd}
        \end{definition}
        
        \begin{theorem}
        Given an uncertain multi-objective optimal control problem, $u^*$ is a set-based minmax robust efficient solution, if and only if $u^*$ is an optimal solution to Problem \eqref{eq:rhd} for some utopic $Z$ and if $\max_{\alpha \in U} \hat J_i(\tilde x_0 + \alpha, u)$ exists for all $u\in U$ and for all $i=1,\ldots,k$.
        \end{theorem}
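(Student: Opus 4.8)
The plan is to recognize Problem \eqref{eq:rhd} as a set-valued worst-case Tchebycheff (weighted $\infty$-norm) scalarization and to adapt the classical scalarization argument to the robust, set-based setting. The first step is to simplify the objective of \eqref{eq:rhd}. Writing $S(u) := \max_{\alpha\in\mathcal{Z}}\{\hat J(\tilde x_0+\alpha,u)\}$ for the worst-case set and using that $Z$ is utopic, so that $d_\infty(y,Z)=\max_{i}(y_i-Z_i)$ for every relevant $y$, I would first establish the identity
\[
    dist\!\left(S(u),Z\right)=\sup_{y\in S(u)}\max_{i}(y_i-Z_i)=\max_{i}\Big(\max_{\alpha\in\mathcal{Z}}\hat J_i(\tilde x_0+\alpha,u)-Z_i\Big).
\]
The second equality uses the coordinatewise monotonicity of $y\mapsto\max_i(y_i-Z_i)$ together with the standing hypothesis that each componentwise worst case $\max_{\alpha}\hat J_i(\tilde x_0+\alpha,u)$ is attained; this is exactly where that assumption enters. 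With this identity, \eqref{eq:rhd} becomes an (unweighted) Tchebycheff problem whose sublevel sets are the downsets of a single corner point $Z+c\,\mathbf{1}$.

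For the direction ``$\Leftarrow$'', I would assume $u^*$ solves \eqref{eq:rhd} for a utopic $Z$ and argue by contraposition. If $u^*$ were not efficient, the definition of set-based minmax robust efficiency supplies a curve $u'\neq u^*$ with $\hat J_\mathcal{Z}(\tilde x_0,u')\subseteq \hat J_\mathcal{Z}(\tilde x_0,u^*)-\mathbb{R}^k_\succeq$. Picking any $y'\in S(u')$, the inclusion yields a $y\in S(u^*)$ with $y'\le y$, and since $Z$ is utopic, $\max_i(y'_i-Z_i)\le\max_i(y_i-Z_i)\le dist(S(u^*),Z)$. Taking the supremum over $y'$ gives $dist(S(u'),Z)\le dist(S(u^*),Z)$, which must contradict the optimality of $u^*$. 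The delicate point is that the bare inclusion only yields a non-strict inequality, so I would need to upgrade it using strictness of the domination (a point of $S(u')$ lying strictly below the corner realizing the distance of $u^*$); this is where the weak- versus strict-efficiency bookkeeping has to be done carefully.

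For ``$\Rightarrow$'', I would start from an efficient $u^*$ and explicitly construct a utopic reference point, following the classical balancing idea: place $Z$ so that the $\infty$-norm level set of radius $c^*:=\max_i(\max_\alpha\hat J_i(\tilde x_0+\alpha,u^*)-Z_i)$ just touches $S(u^*)$ from below at its corner, e.g.\ by setting $Z_i=\max_\alpha\hat J_i(\tilde x_0+\alpha,u^*)-c$ for a constant $c$ large enough to keep $Z$ utopic. I would then show that any $u'$ with $dist(S(u'),Z)<dist(S(u^*),Z)$ satisfies $\max_\alpha\hat J_i(\tilde x_0+\alpha,u')<\max_\alpha\hat J_i(\tilde x_0+\alpha,u^*)$ in every coordinate, and translate this back into the inclusion $\hat J_\mathcal{Z}(\tilde x_0,u')\subseteq\hat J_\mathcal{Z}(\tilde x_0,u^*)-\mathbb{R}^k_\succeq$, contradicting efficiency of $u^*$.

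The main obstacle I anticipate is precisely this last translation. Because the $\infty$-norm corner only ``sees'' the componentwise worst case $\max_\alpha\hat J_i(\tilde x_0+\alpha,u)$, a strict improvement of the scalarized distance a priori gives only componentwise domination of the worst-case values, which is in general weaker than the set inclusion defining set-based robust efficiency. Closing this gap --- i.e.\ showing that for the constructed $Z$ the componentwise worst-case domination does force the required set inclusion, while simultaneously reconciling weak and strict efficiency --- is the crux of the argument, and the place where the attainment hypothesis and the utopic placement of $Z$ must be exploited together.
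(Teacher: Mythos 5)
Your preliminary identity $dist(S(u),Z)=\max_{i}\left(\max_{\alpha\in\mathcal{Z}}\hat J_i(\tilde x_0+\alpha,u)-Z_i\right)$ is correct, and making it explicit is already a service the paper does not perform. Your ``$\Leftarrow$'' sketch then reconstructs essentially the \emph{entire} proof the paper gives: the paper only argues this one direction, by contradiction, taking $u'$ with $\hat J_{\mathcal{Z}}(\tilde x_0,u')\subseteq\hat J_{\mathcal{Z}}(\tilde x_0,u^*)-\mathbb{R}^k_\succeq$, invoking Lemma 3.4 of Ehrgott et al.\ to dominate realizations pointwise, and then simply \emph{asserting} the strict inequality $dist(S(u'),Z)<dist(S(u^*),Z)$. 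The strictness problem you flag is therefore a genuine defect of the paper's own argument, not merely of your sketch: pointwise domination $y'\leq y^*$, $y'\neq y^*$ gives only $\max_i(y'_i-Z_i)\leq\max_i(y^*_i-Z_i)$, and equality is possible (take $y'=(0,1)$, $y^*=(1,1)$, $Z=(0,0)$), so, exactly as in classical Tchebycheff theory, the scalarization certifies only \emph{weak} robust efficiency unless one adds uniqueness of the minimizer or an augmentation term. Your hoped-for upgrade via ``a point of $S(u')$ strictly below the corner realizing the distance of $u^*$'' need not exist, so this half of your plan stalls at the same spot the paper silently jumps over.

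The second gap you identify --- translating componentwise worst-case domination back into the set inclusion --- is not bookkeeping but a fatal obstruction to the ``$\Rightarrow$'' direction, which the paper does not even attempt despite stating an ``if and only if.'' By your own identity, the objective of \eqref{eq:rhd} depends on $u$ only through the worst-case vector $w(u)$ with $w_i(u)=\max_{\alpha\in\mathcal{Z}}\hat J_i(\tilde x_0+\alpha,u)$, so no choice of utopic $Z$ can see anything finer than $w$. But set-based efficiency is strictly finer: let $\hat J_{\mathcal{Z}}(\tilde x_0,u^*)$ be the segment from $(1,0)$ to $(0,1)$ (e.g.\ $\hat J(\tilde x_0+\alpha,u^*)=(\alpha,1-\alpha)$, $\alpha\in[0,1]$) and $\hat J_{\mathcal{Z}}(\tilde x_0,u')=\{(0.9,0.9)\}$. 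Then $w(u')=(0.9,0.9)$ is strictly below $w(u^*)=(1,1)$ in every component, so $u'$ beats $u^*$ in \eqref{eq:rhd} for \emph{every} utopic $Z$; yet no point of the segment dominates $(0.9,0.9)$, so $\hat J_{\mathcal{Z}}(\tilde x_0,u')\not\subseteq\hat J_{\mathcal{Z}}(\tilde x_0,u^*)-\mathbb{R}^2_\succeq$ and $u^*$ remains set-based minmax robust efficient. Hence the translation you planned cannot be made in general; it would require an extra hypothesis, e.g.\ that the supremum of $\hat J_{\mathcal{Z}}(\tilde x_0,u)$ is attained as an element of that set, collapsing the upper set order to the componentwise order on $w(u)$. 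In summary, you attempted more than the paper (both implications), correctly localized the strictness gap that the paper's one-directional proof glosses over, and your ``crux'' for the forward direction is a real counterexample-backed obstruction to the theorem as stated --- but as a proof the proposal remains incomplete, since both cruxes are left open.
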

        
        \begin{proof}
            Assume that $u^*$ is not a robust efficient solution for Problem \ref{eq:smocp}. Then there exists an $u' \in U$ such that $\hat J_\mathcal{Z}(\tilde x_0,u') \subset \hat J(\tilde x_0, u^*) - \mathbb{R}^k_\geq$. Based on \cite[Lemma 3.4]{ehrgott2014minmax}, for all $\alpha \in U$, there exists $\alpha \in \mathcal{Z}$ such that $\hat J(\tilde x_0+\alpha, u') \preceq F(\tilde x_0+\alpha, u^*)$.
            
            From this fact it follows that
            \begin{equation}                
                d_H\left(\max_{\alpha\in U} \{\hat J(\tilde x_0+\alpha, u')\}, Z\right)
                < 
                d_H\left(\max_{\alpha\in U} \{\hat J(\tilde x_0+\alpha, u^*)\}, Z\right).
            \end{equation}
            which contradicts the assumption that $u^*$ is a solution of Problem \eqref{eq:rhd}.
        \end{proof}
        
        It should be noted that this is a particular case of the result found in \cite{Zhou:2017}. However, this formulation allows using other set distances such as the generalized averaged Hausdorff distance \cite{vargas2018generalization}. 
        The entire online phase is summarized in Algorithm \ref{alg:online}. % shows the steps of the online phase.
        
        \begin{algorithm}[t]
            \begin{algorithmic}
                \Require Weight $\rho\in\mathbb{R}^k$ with $\sum_{i=1}^k \rho_i=1$ and $\rho\geq 0$.
                \For{$t=t_0,t_1,t_2,\ldots$}
                    \State Obtain the current initial condition $\tilde x_0 = \tilde x(t)$.
                    \State Identify the $2\tilde n_x$ neighboring grid points of $\tilde x_0$ in $\mathcal{L}$. These points are collected in the index set $\mathcal{I}$.
                    \State From each of the corresponding efficient sets $\mathcal{R}_{(\tilde x_0)_i}, i \in \mathcal{I}$, select an efficient solution $u_i$ according to the weight $\rho$.
                    \State Compute the distances $d_i$ between the entries of the library and $\tilde x_0$:
                    $$d_i = \|(\tilde x_0)_i - \tilde x_0\|_2.$$
                    \If{$\exists j \in\mathcal{I}$ with $d_j=0$}
                        \State $u=u_j$
                    \Else
                        \State $$\hat u = \frac{\sum_{i=1}^{|\mathcal{I}|}\frac{1}{d_i}u_i}{\sum_{i=1}{|\mathcal{I}|}\frac{1}{d_i}}$$
                        \State Solve Problem (\ref{eq:rhd}) with $\hat u$ as initial point to find $u$.
                    \EndIf
                    \State Apply $u$ to the plant for the control horizon length $t_c$.
                \EndFor
            \end{algorithmic}
            \caption{Online phase}
            \label{alg:online}
        \end{algorithm}
        
        Now, we present an example of the reference point method (\ref{eq:rhd}) on Problem (\ref{eq:witting}) using a standard SQP solver for the single-objective optimization problems. We have taken $u_0 = (-1.8, -1.6)^T$ and $Z = (0, 0)^T$. Figure \ref{fig:exCS4} shows the landscape and the contour plot for the problem. It is possible to observe that for this particular instance, it is a multi-modal problem. Figure \ref{fig:exCS1} shows the path of the algorithm in decision space and Figure \ref{fig:exCS2} shows the path in objective space. From the figures, it is possible to observe the method converges to the optimal solution in this scenario within a few iterations.
                        
        \begin{figure}[h]
        	\centering
            \begin{subfigure}[t]{.4\textwidth}
                \includegraphics[width=\textwidth]{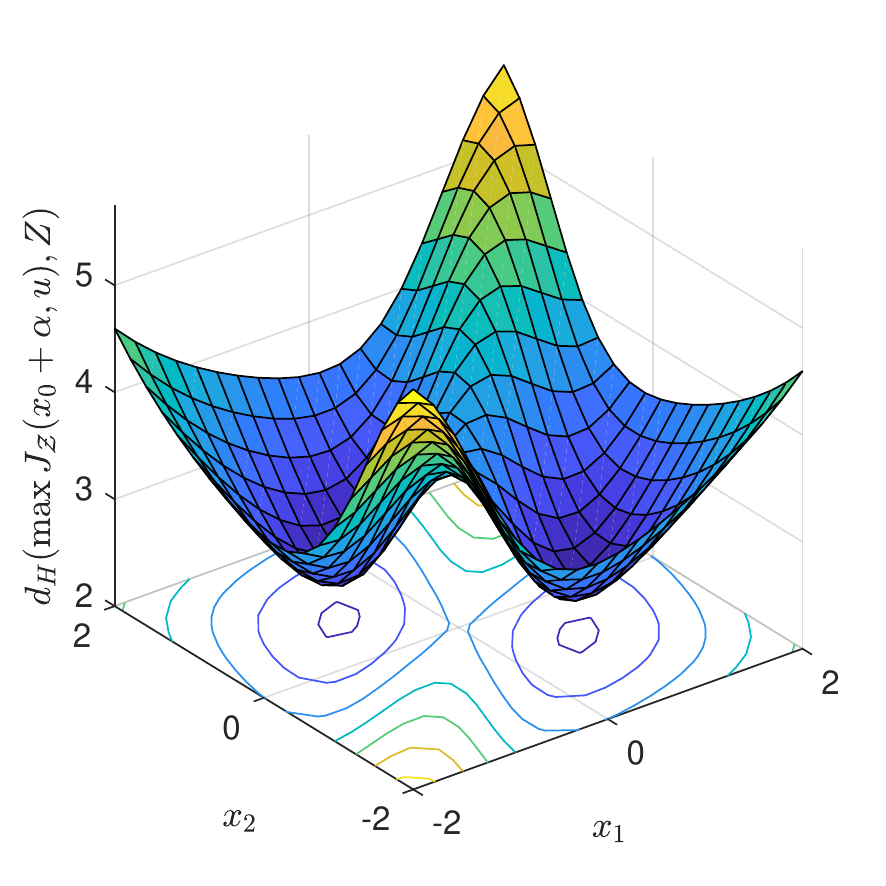}
                \caption{Landscape of the problem.}
                \label{fig:exCS4}
            \end{subfigure}\hfil
            \begin{subfigure}[t]{.4\textwidth}                         
                \includegraphics[width=\textwidth]{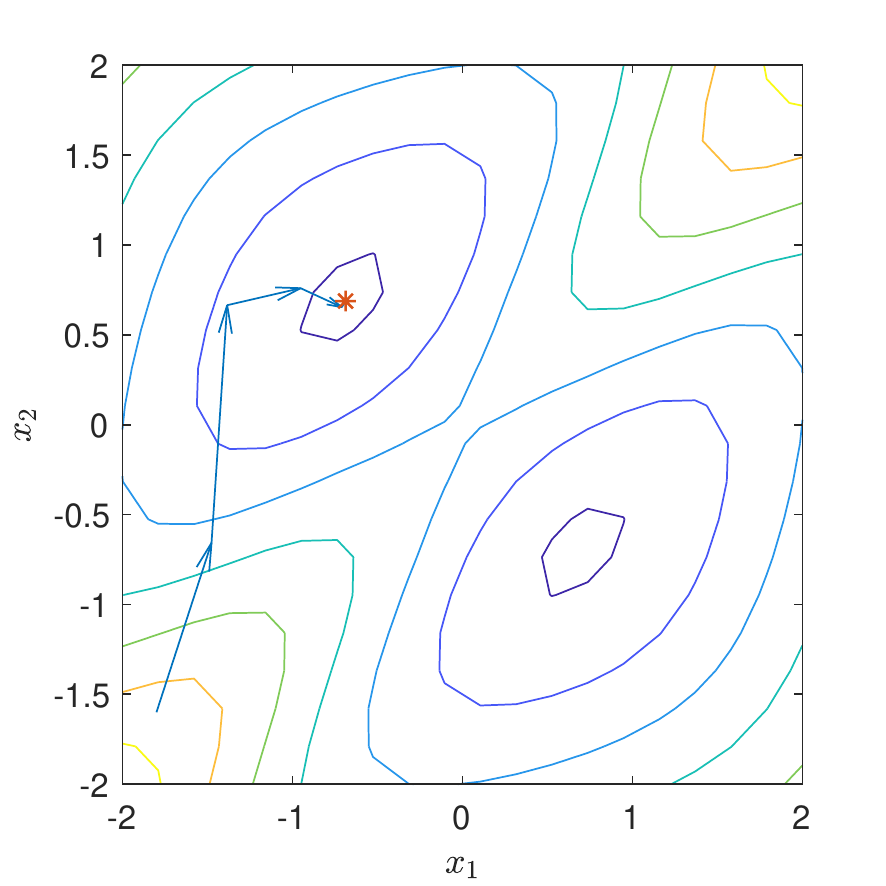}
                \caption{Path in decision space.}
                \label{fig:exCS1}
            \end{subfigure}\\
            \begin{subfigure}[t]{.4\textwidth}
                \includegraphics[width=\textwidth]{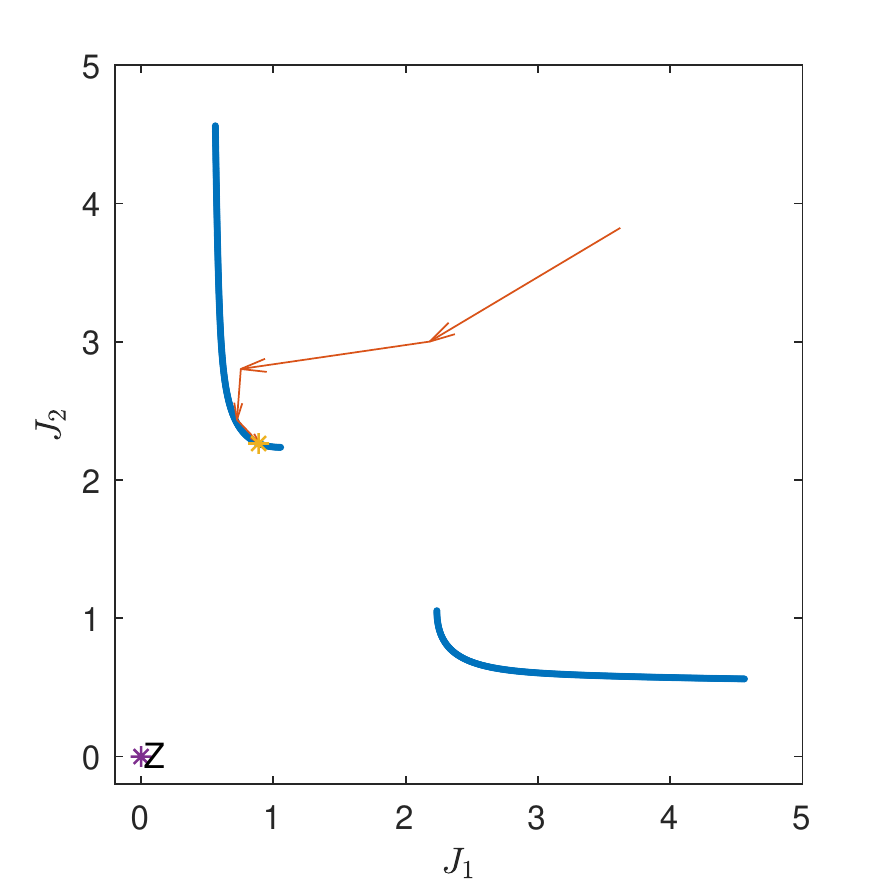}        
                \caption{Path in objective space.}
                \label{fig:exCS2}
            \end{subfigure}\hfil
            \begin{subfigure}[t]{.4\textwidth}
                \includegraphics[width=\textwidth]{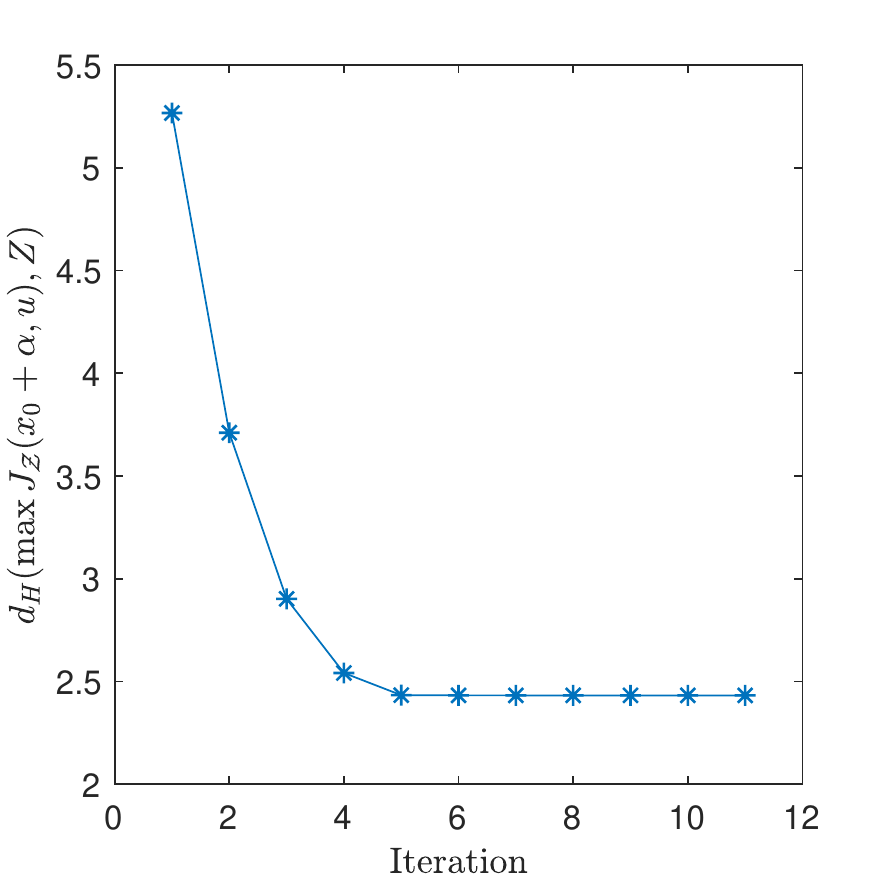}
                \caption{Convergence plot.}
                \label{fig:exCS3}
            \end{subfigure}            
            \caption{Example of the reference point method \eqref{eq:rhd} on Problem \eqref{eq:witting} with $u_0 = (-1.8, -1.6)^T$ and $Z = (0, 0)^T$.}
            \label{fig:exCS}
        \end{figure}

\section{Multi-objective car maneuvering}
    In this section, we present a car maneuvering problem with uncertainty in the state space that we will be dealing with throughout the rest of the paper. We first present the mathematical formulation of the problem and then perform a thorough analysis of the model with respect to the conflicting objective functions for a very coarse discretization in time by $N_u=2$ points. This way, we can gain insight into the problem structure and take this into consideration in our solution strategy.
    %can gain insight into the structure of the problem and then design and solve the problem accordingly. 
    Furthermore, we analyze the effect of the uncertainty in the different parameters by performing a sensitivity analysis of the problem. Finally, we conclude the section by applying our on-/offline optimization approach and present the numerical results.
    We will show that using an additional online optimization step results in a very efficient framework for feedback control of complex nonlinear systems with uncertainty.
    
    \subsection{Model definition}
        In this work, we consider the well-known bicycle model \cite{taheri1990investigation}. We are interested in optimally determining the steering angle for a vehicle with respect to the objectives secure and fast driving under uncertain initial conditions on a given track. 
        %This model was also considered in \cite{ober2018explicit} in the multi-objective case. 
        In this model -- which was also considered in \cite{ober2018explicit} -- the dynamics of the vehicle are approximated by representing the two wheels on each axis by one wheel on the centerline, see Figure \ref{fig:Car_Bicycle_a} for an illustration. When assuming a constant longitudinal velocity $v_x$, this leads to a nonlinear system of five coupled ODEs:
        \begin{equation}
            \begin{split}
            \dot{x}(t) &=
                \begin{pmatrix}
                    \dot{p_1}(t) \\
                    \dot{p_2}(t) \\
                    \dot{\varTheta}(t) \\
                    \dot{v_y}(t) \\
                    \dot{r}(t)
                \end{pmatrix}
                =
                \begin{pmatrix}
                    v_x(t)\cos(\varTheta(t)) - v_y(t)\sin(\varTheta(t))\\
                    v_x(t)\sin(\varTheta(t)) + v_y(t)\cos(\varTheta(t))\\
                    r\\
                    C_1(t)v_y(t) + C_2(t)r(t) + C_3(t)u(t)\\
                    C_4(t)v_y(t) + C_5(t)r(t) + C_6(t)u(t)                    
                \end{pmatrix}
                , t\in (t_0,t_e],\\
            x(t_0) &= x_0,
            \end{split}
            \label{eq:dyn}
        \end{equation}
where $x = (p_1,p_2,\varTheta,v_y,r)^T$ is the state state consisting of the position $p=(p_1,p_2)$, the angle $\varTheta$ between the horizontal axis and the longitudinal vehicle axis, the lateral velocity $v_y$ and the yaw rate $r$. The vehicle is controlled by the front wheel angle $u$ and the variables
        \begin{equation}
            \begin{array}{ll}
                C_1(t) = -\frac{C_{\alpha,f}\cos(u(t))+C_{\alpha,r}}{mv_x(t)},
                &C_2(t) = -\frac{L_fC_{\alpha,f}\cos(u(t))+L_rC_{\alpha,r}}{I_zv_x(t)},\\
                C_3(t) = \frac{C_{\alpha,f}\cos(u(t))}{m},
                &C_4(t) = -\frac{L_fC_{\alpha,f}\cos(u(t))+L_rC_{\alpha,r}}{mv_x(t)}-v_x(t),\\
                C_5(t) = -\frac{L^2_fC_{\alpha,f}\cos(u(t))+L^2_rC_{\alpha,r}}{I_zv_x(t)},
                &C_6(t) = \frac{L_fC_{\alpha,f}\cos(u(t))}{I_z},\\
            \end{array}
        \end{equation}
        have been introduced for abbreviation. The physical constants of the vehicle model are presented in Table \ref{tab:phyconst}.
        
        \begin{figure}
        	\centering
            \begin{subfigure}[t]{.3\textwidth}
                \includegraphics[width=\textwidth]{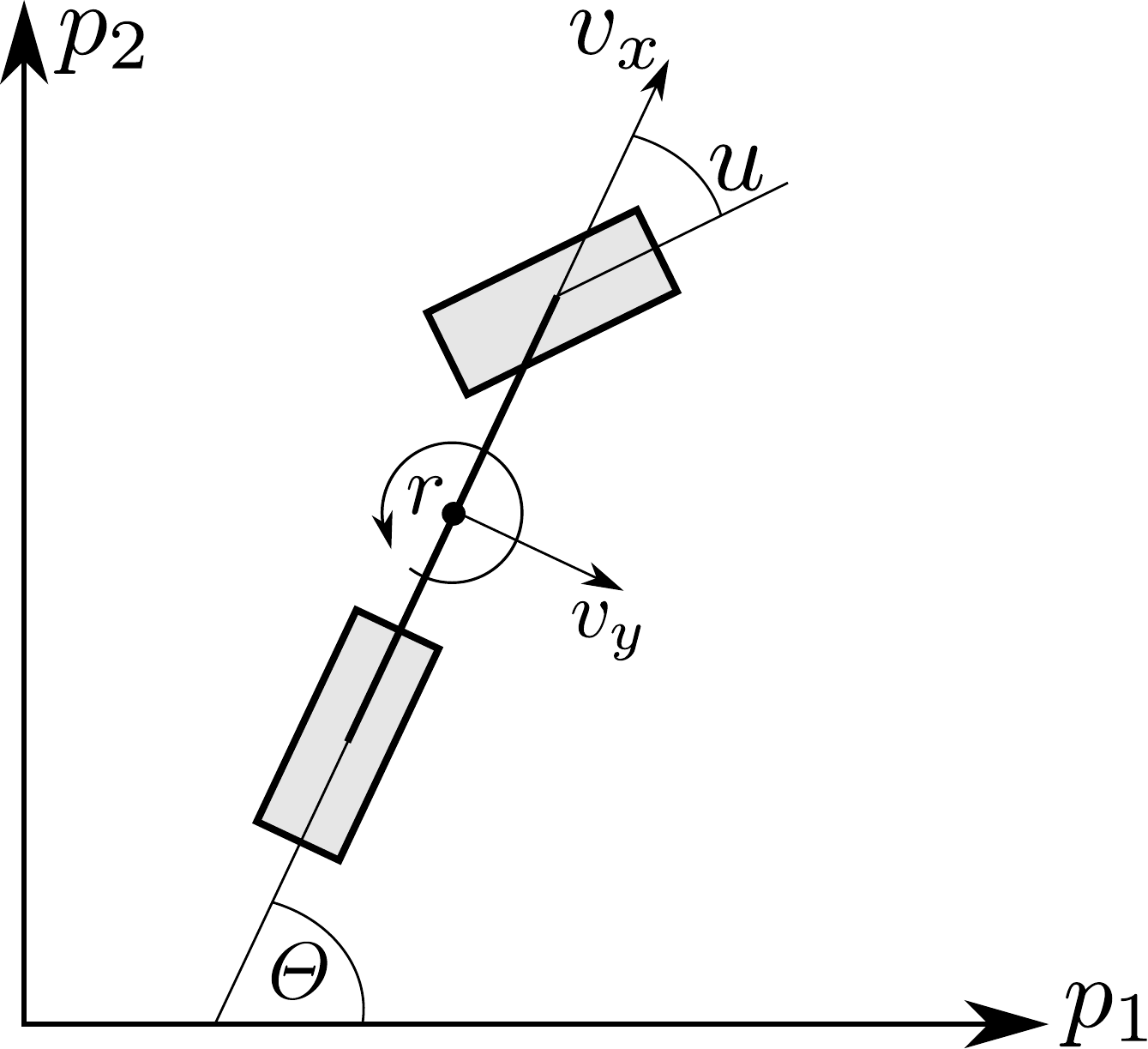}
                \caption{Bicycle model.}
                \label{fig:Car_Bicycle_a}
            \end{subfigure}
            \hfil
            \begin{subfigure}[t]{.5\textwidth}
                \includegraphics[width=\textwidth]{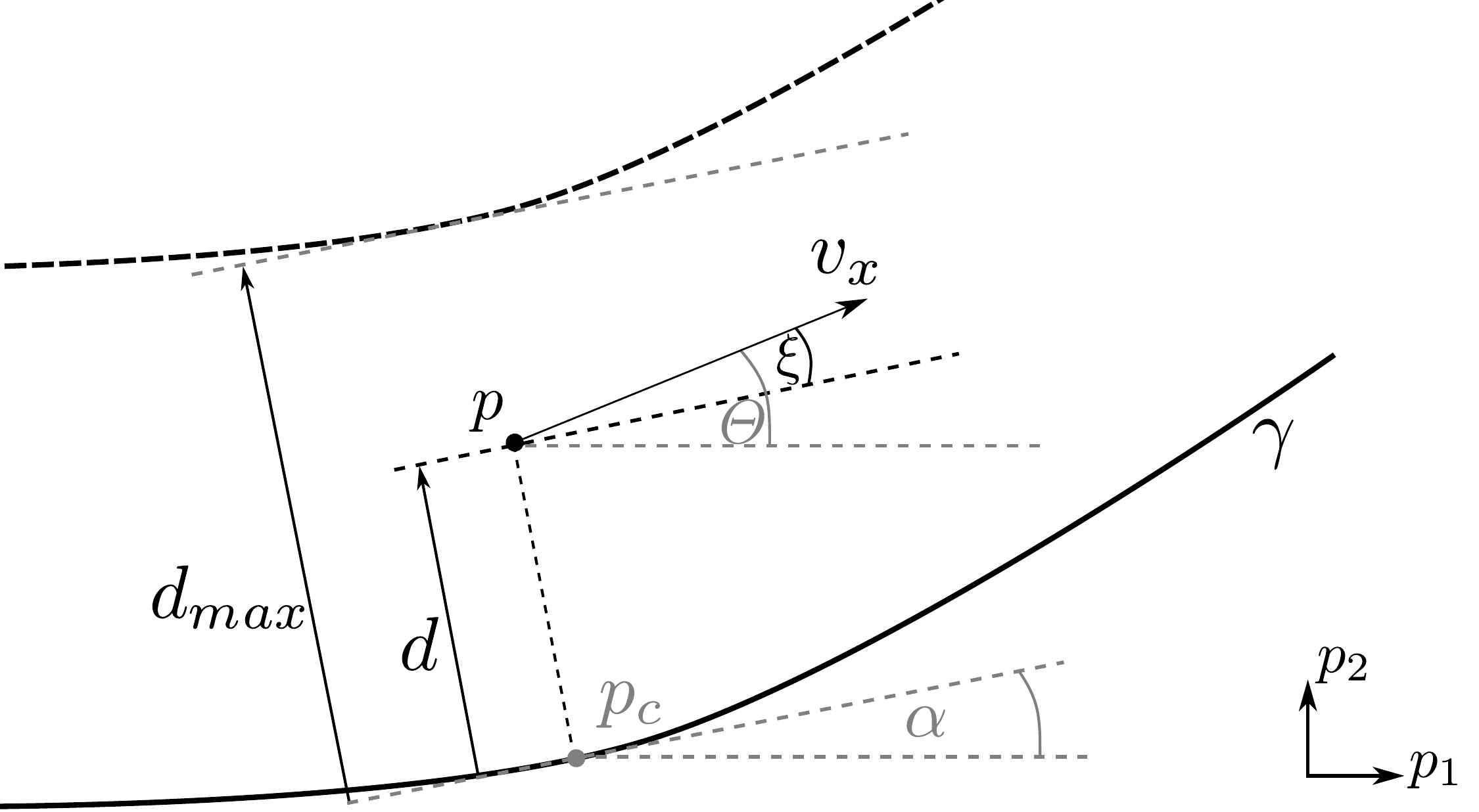}
                \caption{Track linearized in $p_c$.}
                \label{fig:Car_Bicycle_b}
            \end{subfigure}
        	\caption{Bicycle model and coordinates relative to the track at position $p$ \cite{ober2018explicit}.}
        	\label{fig:Car_Bicycle}
        \end{figure}
        
        \begin{table}
        	\centering
            \caption{Physical constants of the vehicle model.}
            \label{tab:phyconst}
            \begin{tabular}{|l|l|l|}
                \hline
                Variable    &Physical property  &Numerical value\\ \hline
                $C_{\alpha,f}$   &Cornering stiffness coefficient (front)    &65100  \\
                $C_{\alpha,f}$   &Cornering stiffness coefficient (rear)    &54100  \\
                $L_f$   &Distance front wheel to center of mass &1  \\
                $L_r$   &Distance rear wheel to center of mass  &1.45   \\
                $m$   &Vehicle mass   &1275   \\
                $I_z$   &Moment of inertia    &1627   \\ \hline
            \end{tabular}
        \end{table}

        The first objective measures the distance $d$ to the centerline (cf.~Figure \ref{fig:Car_Bicycle_b}), while the second objective corresponds to the distance driven along a track $\gamma$. For both objectives, we use projections of the position $p$ of the vehicle onto the centerline ($p_c$):
        \begin{equation}
            \Pi_\gamma(p(t)) = \arg\min\limits_{p_c\in\gamma}\|p(t) - p_c\|_2,
        \end{equation}
        and the corresponding distance to the centerline is defined as
        \begin{equation}
            d(t) = \min\limits_{p_c\in\gamma}\|p(t) - p_c\|_2 = \|p(t) - \Pi_\gamma(p(t))\|_2.
        \end{equation}
        
        In its current form, the system cannot be parameterized, as the track $\gamma$ is defined by a function. To this end, a local approximation of the track was proposed in \cite{ober2018explicit} (cf.~Figure \ref{fig:Car_Bicycle_b}), where the current angle $\alpha$ and curvature $\kappa = \frac{d \alpha}{ds}$, where $s$ is the coordinate along the centerline, are assumed to be constant. This way, the track can be described by the four parameters $\{p_{c,1}, p_{c,2}, \alpha, \kappa \}$. In combination with the initial condition $x_0$, we have, in total, a nine-dimensional parameter. When introducing a numerical grid for this parameter, this results in prohibitively expensive offline-phase. Fortunately, we can exploit several symmetries in the system. As the entire problem setup is invariant under translation as well as rotation, only the relative position between the vehicle and the track is of interest, by which we can replace the parameters $\{p_1,p_2,\varTheta,p_{c,1}, p_{c,2}\}$ by the distance $d$ and the relative angle $\xi$ (Figure \ref{fig:Car_Bicycle_b}, for details on the symmetry reduction we refer to \cite{ober2018explicit}). In summary,
        the problem can be parametrized by five parameters:
        \begin{equation}
            \tilde x_0 = (v_y,r,\xi,d,\kappa)^T,
        \end{equation}
        where we use the notation $\tilde{x}_0$ to indicate that this contains both the initial condition of the dynamics ($v_y$ and $r$) as well as the relation to the track ($\xi$, $d$ and $\kappa$) at $t=t_0$.
        %where $\xi$ is the angle between the track and the vehicle direction, $d$ is the distance to the centerline, and $\kappa$ is the curvature of the track and the initial orientation and position for the vehicle dynamics become:
        Exploiting the symmetries, the initial conditions for \eqref{eq:dyn} become
        \begin{equation}
            \varTheta = \xi, \; p(t_0)=(0,d) \text{ with } \Pi_\gamma(p(t_0)) = (0,0).
        \end{equation}
        
        For this problem, we consider uncertainties that appear due to the precision and resolution of the sensors used and that are given by intervals. In particular, we focus on treating uncertainty in the distance to the centerline $d$ (cf. Section 5.2).
        
        Thus, the uMOCP can be formulated as
        \begin{equation}
            \begin{split}                        
                \min\limits_{u\in\mathcal{U}} \sup_{\alpha\in \mathcal{Z}}(x_0+\alpha,u) &= \min\limits_{u\in\mathcal{U}} \sup_{\alpha\in \mathcal{Z}}
                    \begin{pmatrix}
                        \int^{t_e}_{t_0}d(t)^2 dt \\
                        -\int^{\pi_\gamma(p(t_e))}_{\pi_\gamma(p(t_0)}1 ds
                    \end{pmatrix}
                \\
                \mbox{s.t. }\;\;\;\;\; &\mbox{Dynamics (\ref{eq:dyn})}\\
                &\sup\limits_{\alpha\in \mathcal{Z}} d+\alpha \leq d_{\max}.
            \end{split}
        \end{equation}
        
        In order to solve the problem numerically, we use a \emph{direct approach}, i.e., we introduce a discretization in time on an equidistant grid with step size $h = 0.05$ sec. This way, the control function $u$ becomes a finite-dimensional input with $N_u = (t_e-t_0)/h + 1$ entries, and the above problem becomes a (potentially high-dimensional) parameter-dependent MOP with uncertainty.
        
    \subsection{Thorough analysis of the model}
        In this section, we analyze the model for $N_u=2$. Namely, we show the search space for each objective function, and then we examine the basins of attraction with the so-called \emph{cell mapping technique} \cite{hsu:87,sun2018cell}.

        Cell mapping methods transform the point-to-point dynamics into a cell-to-cell mapping by discretizing the space by hypercubes of finite size. Using the common descent direction $q \in \mathbb{R}^n$ for all objectives (see, e.g., \cite{FS00}), a discrete dynamical of the form $u^{(i+1)} = u^{(i)} + h q(u^{(i+1)})$ can be formulated with a suitable step length $h$. The generalized cell mapping method (GCM) now allows us to analyze these dynamics and thus leads to the discovery of invariant sets, stable and unstable manifolds of saddle-like responses, domains of attraction, and their boundaries. The invariant sets (known as the \emph{persistent group} in the Markov chain literature) represent equilibrium points, periodic or chaotic motions. In the context of multi-objective optimization, one can compute the local/global Pareto set and its basin of attraction (among other features) that can be interesting for exploratory landscape analysis \cite{kerschke2014cell}.
        
        For our analysis, we use $\tilde x_0 = (-3,-6,-\pi/4,0,-0.1)^T$. Figure \ref{fig:bm_ld_surf} shows the objective function values for $N_u=2$, and we observe that both functions are multimodal, i.e., $\hat J_1$ and $\hat J_2$ both have an optimum near $[-0.5, -0.5]^T$ and a second one near $[0.5, 0.5]^T$, respectively.
        \begin{figure}          
        	\centering
	        \includegraphics[width=.45\textwidth]{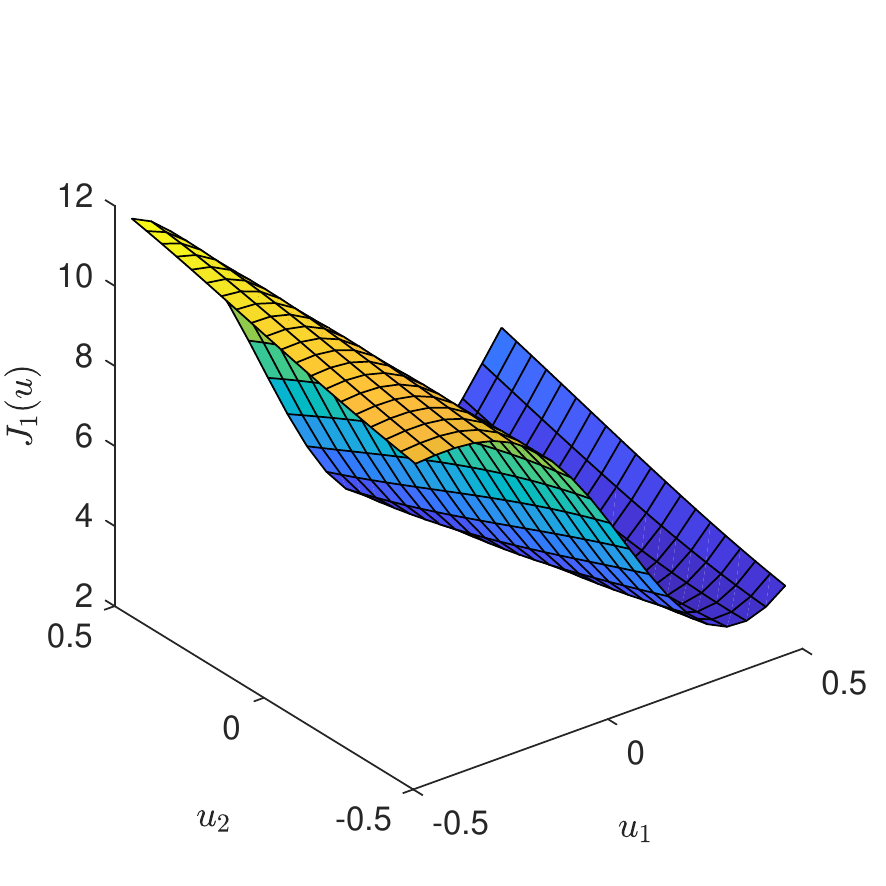} \hfil
	        \includegraphics[width=.45\textwidth]{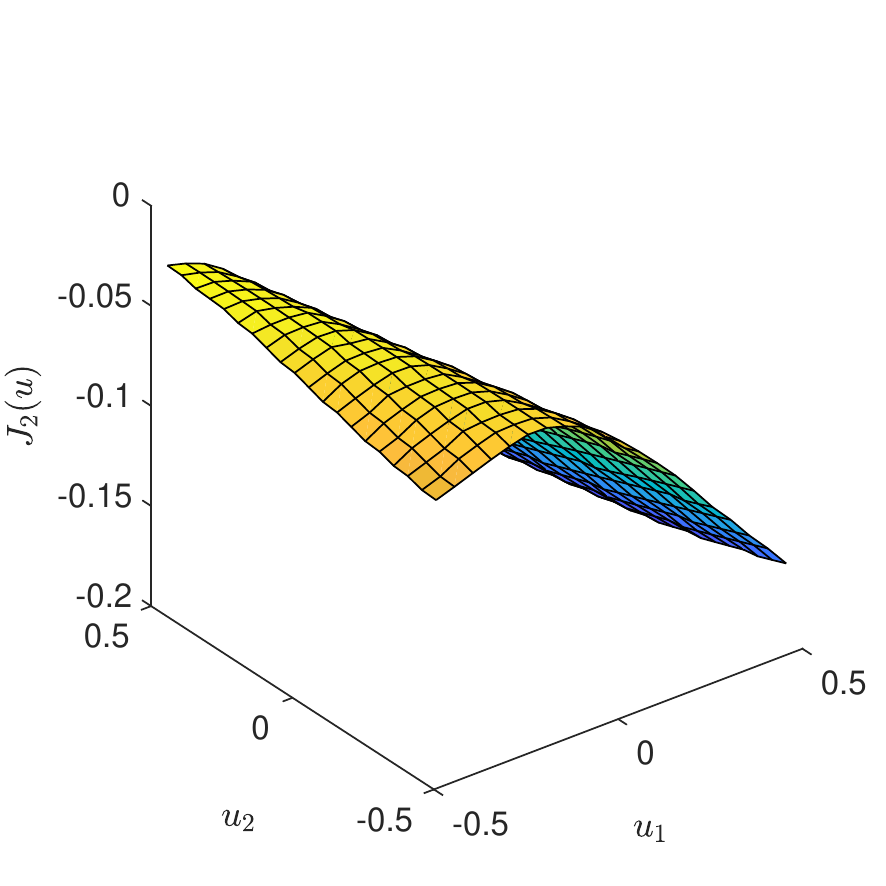}
	        \caption{Surface for $\hat J_1$ (left) and $\hat J_2$ (right).}
	        \label{fig:bm_ld_surf}
        \end{figure}
        
        Figure \ref{fig:bm_ld_attr} shows the attractors (blue) and basins of attraction (arrows) computed by the GCM with the center point method for MOCPs and the corresponding mapping to the objective space. The blue section in the graph corresponds to the attracting regions, and we observe that the function has two attractors. Thus it is a multi-modal function, although the basin of attraction for the global optimum is significantly larger. Notice that this behavior is maintained as the number $N_u$ of dimensions increases.
        
        \begin{figure}        
        	\centering                    
	        \includegraphics[width=.49\textwidth]{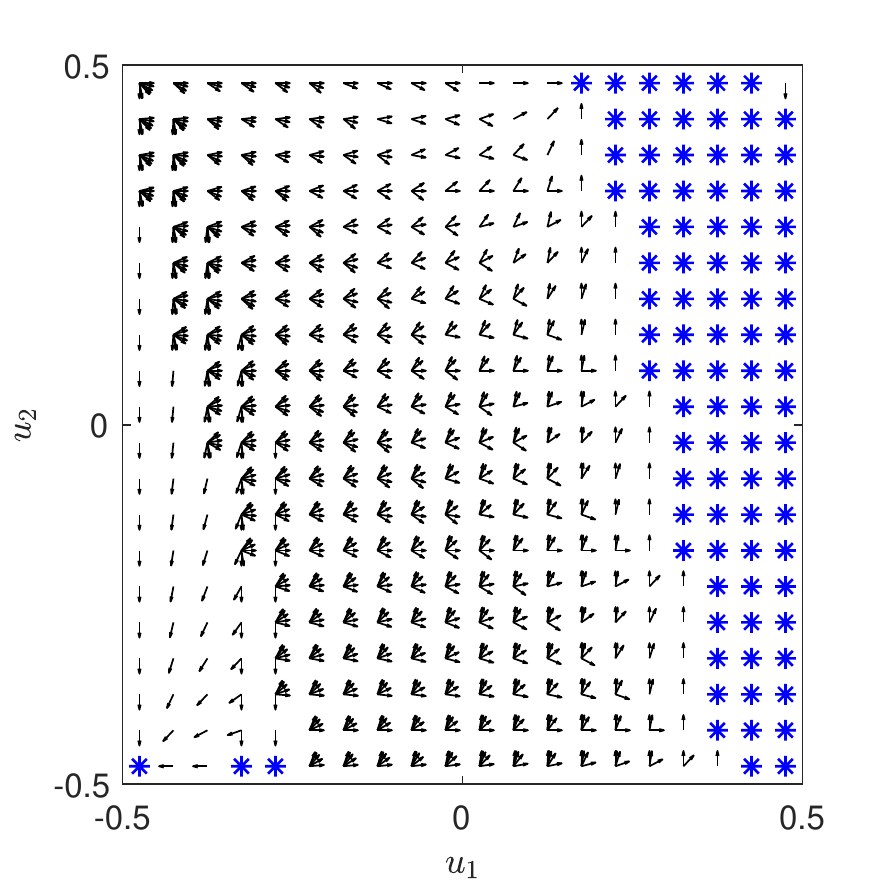} \hfil
	        \includegraphics[width=.49\textwidth]{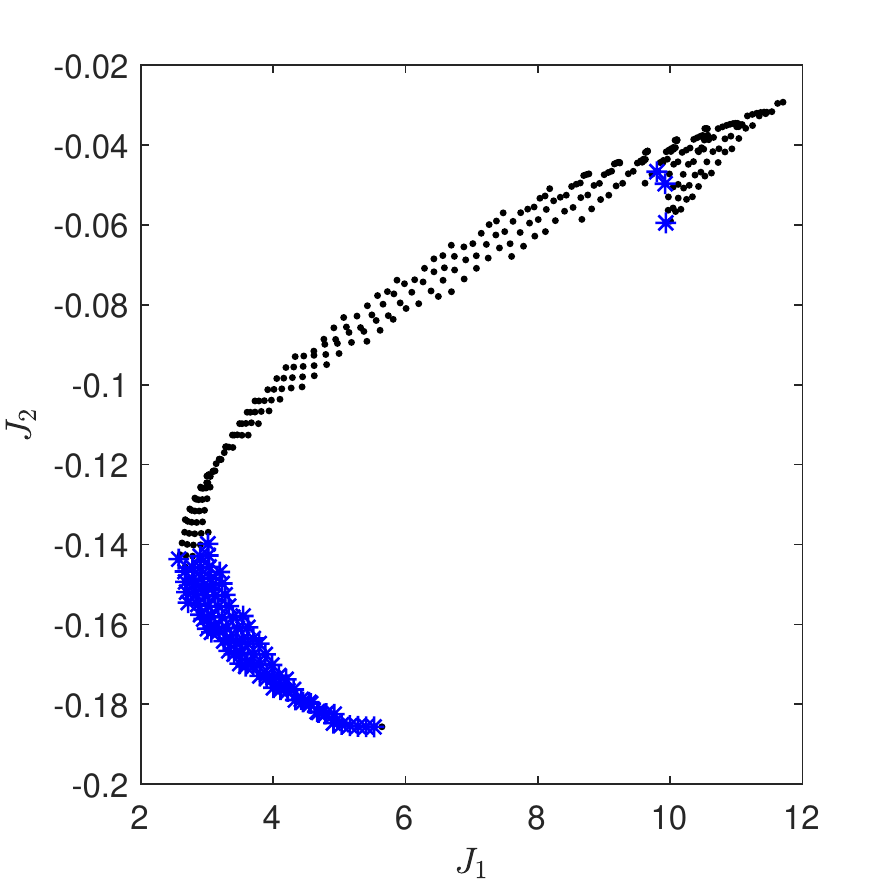}
	        \caption{Decision space (left) and objective space (right). In blue, we show the attractors of $\hat J$ found by GCM with the center point method. In decision space, the arrows represent the directions where there is an improvement in the objective functions. Finally, in objective space, the black dots represent the image of the center points of the cells.}
	        \label{fig:bm_ld_attr}
        \end{figure}
                
        In the following, we present an empirical study on the effect of the uncertainty on lateral velocity, yaw rate, the distance to the centerline, and the mass of the vehicle for $N_u=11$. In this case, we study the uncertainty for $\tilde x_0 = (-0.1801,    0.4349,         0,   -0.0694,   -0.0222)^T$. First, we generated $10,000$ uniform random control variables and computed the set $\hat J_Z(\tilde x_0,u)$ for all $u \in P$. Then we computed their corresponding efficient set. The results are visualized in Figure \ref{fig:ex3unc}. In the figure, each color represents a different feasible point. The first column shows $\hat J_Z(\tilde x_0,u)$ for ten feasible points. There it is possible to observe all the realizations of a feasible point when uncertainty is present. In general, the ``longer'' the lines, the more uncertain a feasible point is. Next, the second column shows the corresponding efficient set approximation. To help visualization, we added dotted lines to join feasible points in the same worst-case. Next, Figure \ref{fig:fronts} shows two efficient sets for initial conditions $(-0.1801,    0.4349,         0,   -0.0694,   -0.0222)^T$ (left) and $(0.9842,   -0.9982,         0,    0.0783,   -0.0222)^T$ (right). In this example the uncertainty is in the distance to the centerline $d$.

        Further, the third column shows $\hat J_Z(\tilde x_0,u)-\mathbb{R}^2_{\succeq}$ for all efficient feasible points. From Figure \ref{fig:ex3unc}, we can observe that in the first three cases, the uncertainty can cause a considerable deterioration in the objective functions. Further, it is possible to visualize that feasible points that were dominated in the nominal case are efficient when uncertainty is introduced. This behavior is especially apparent for the lateral velocity ($v_y$), the yaw rate ($r$) and the distance to the centerline ($d$).
    
    \begin{figure}
    	\centering
        \begin{subfigure}[b]{\columnwidth}
            \includegraphics[width=.27\columnwidth]{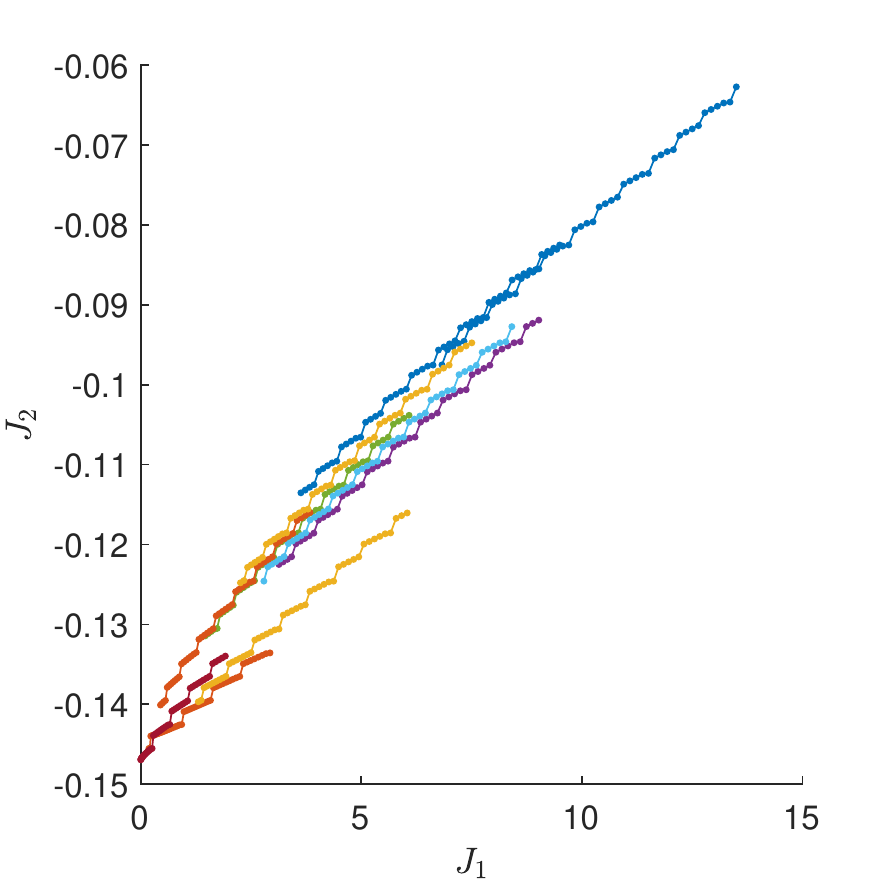} \hfil
            \includegraphics[width=.27\columnwidth]{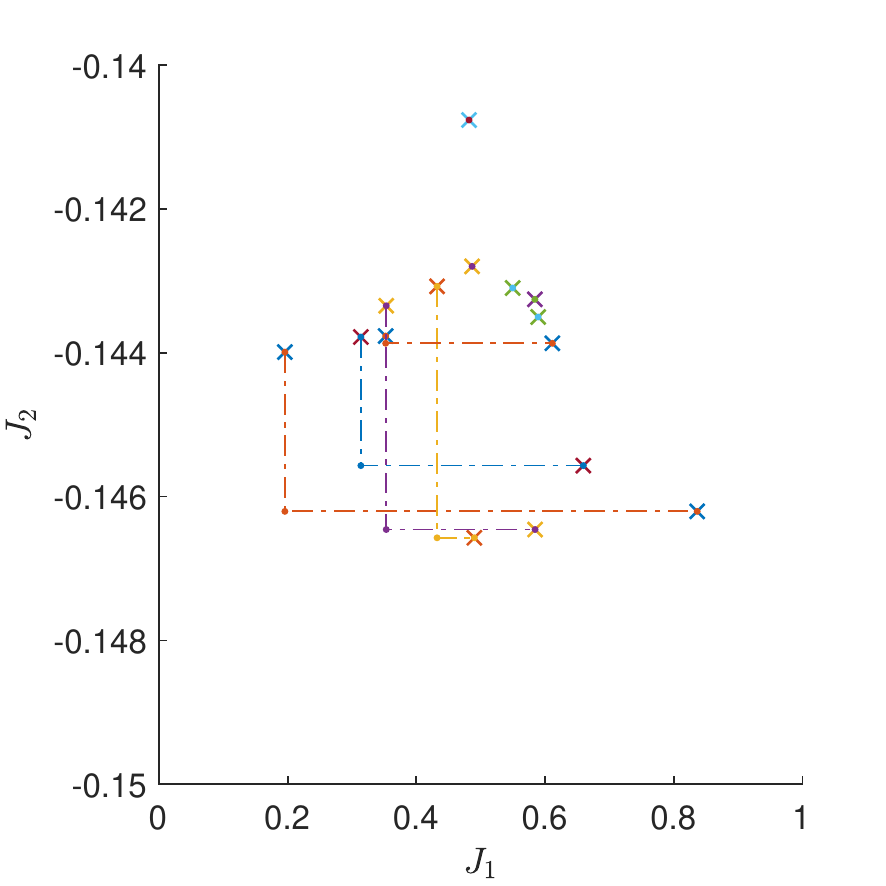} \hfil
            \includegraphics[width=.27\columnwidth]{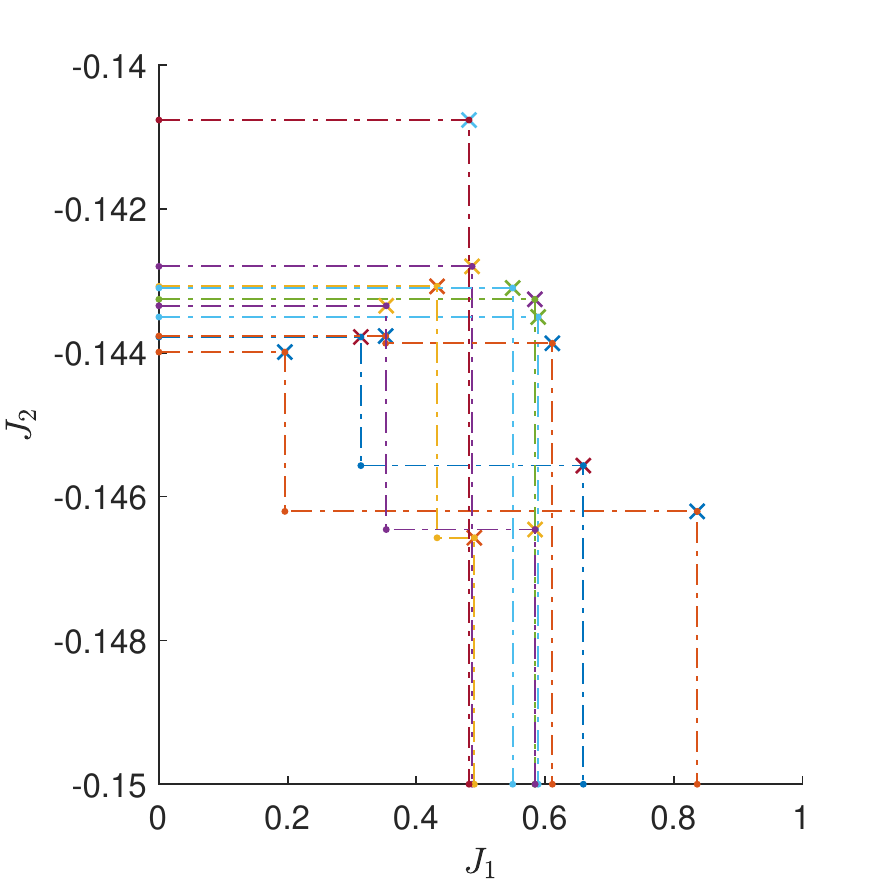}
            \caption{Uncertainty on the lateral velocity ($v_y$).}
        \end{subfigure}
        \begin{subfigure}[b]{\columnwidth}
            \includegraphics[width=.27\columnwidth]{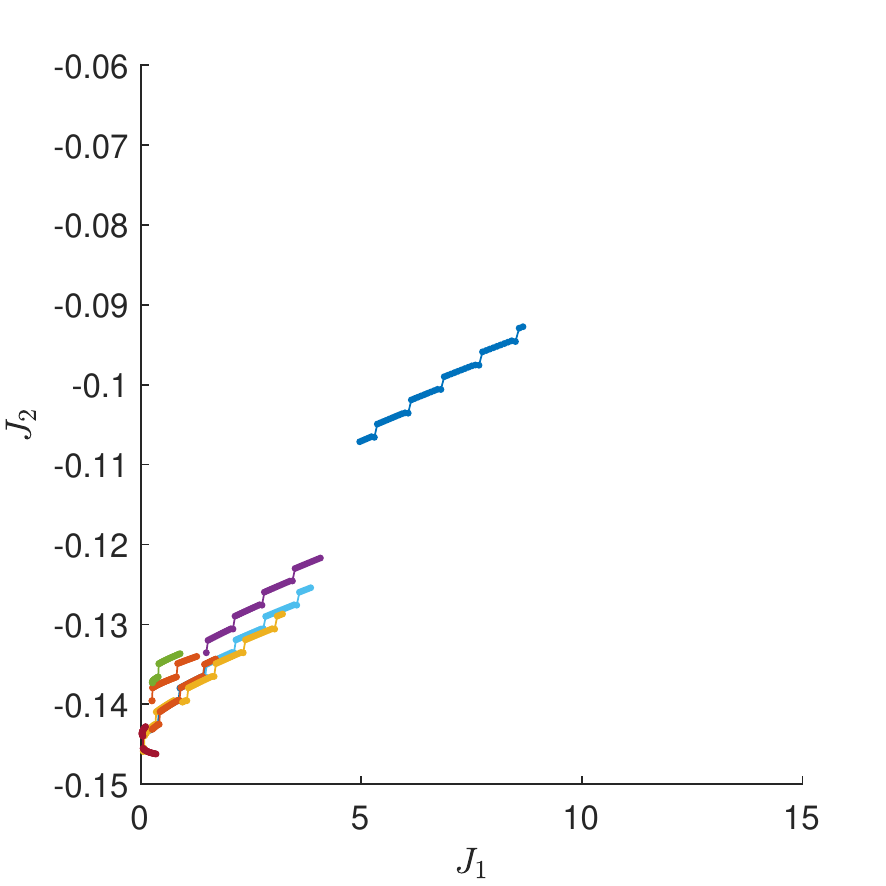} \hfil
            \includegraphics[width=.27\columnwidth]{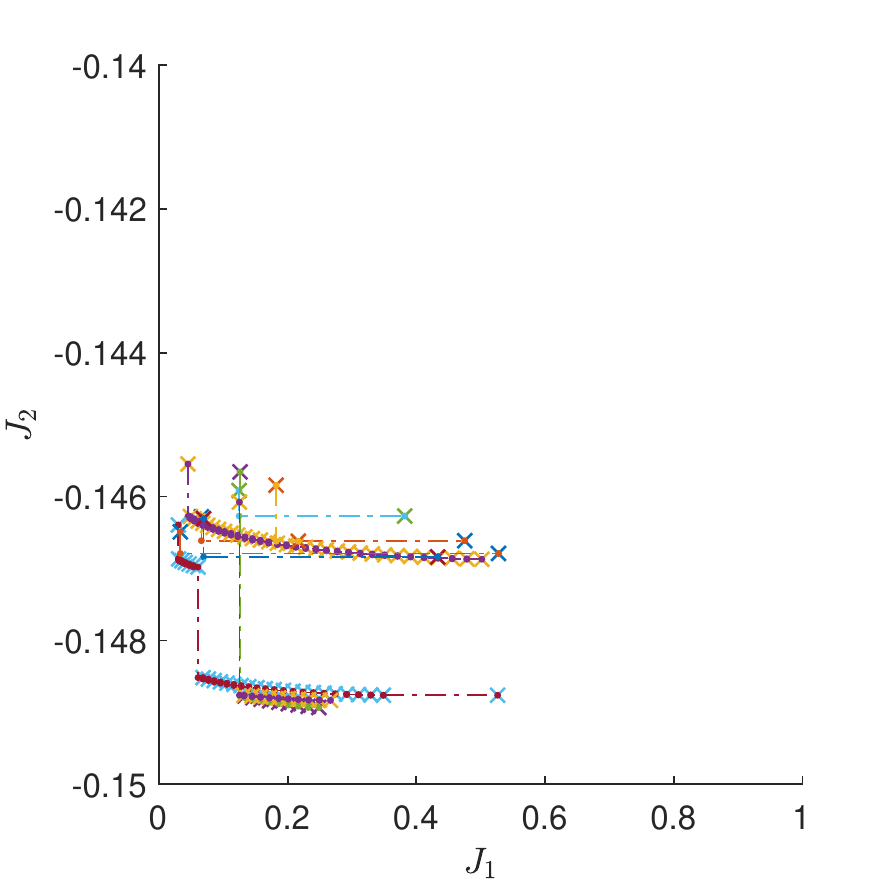} \hfil
            \includegraphics[width=.27\columnwidth]{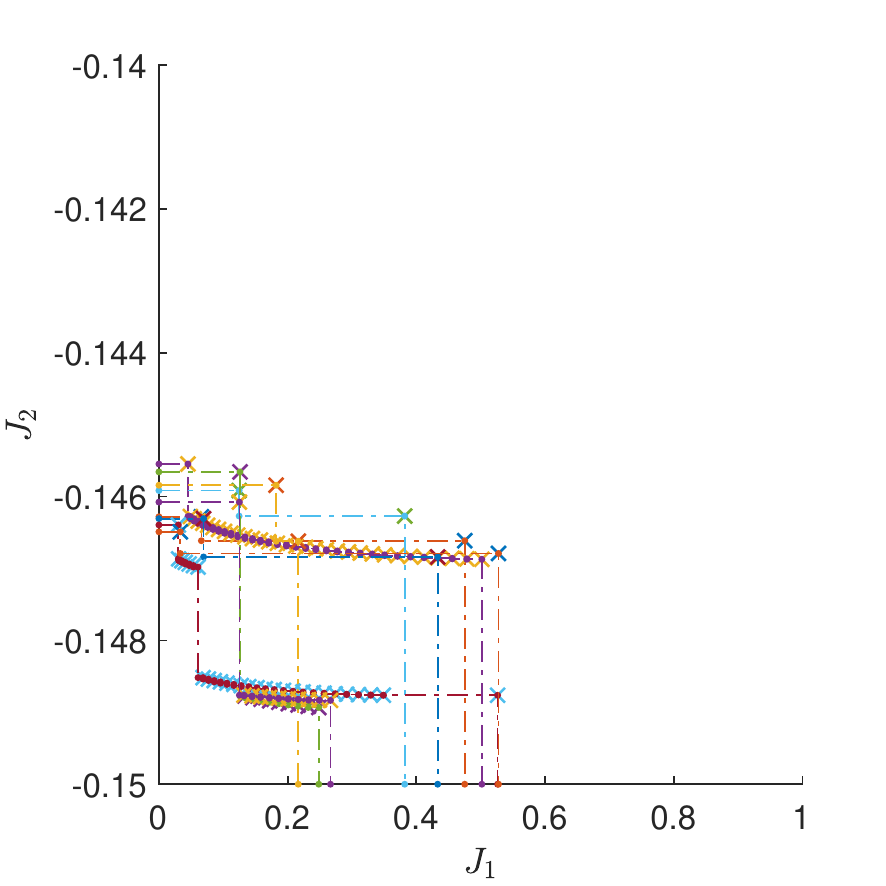}
            \caption{Uncertainty on yaw rate ($r$).}
        \end{subfigure}
        
        \begin{subfigure}[b]{\columnwidth}
            \includegraphics[width=.27\columnwidth]{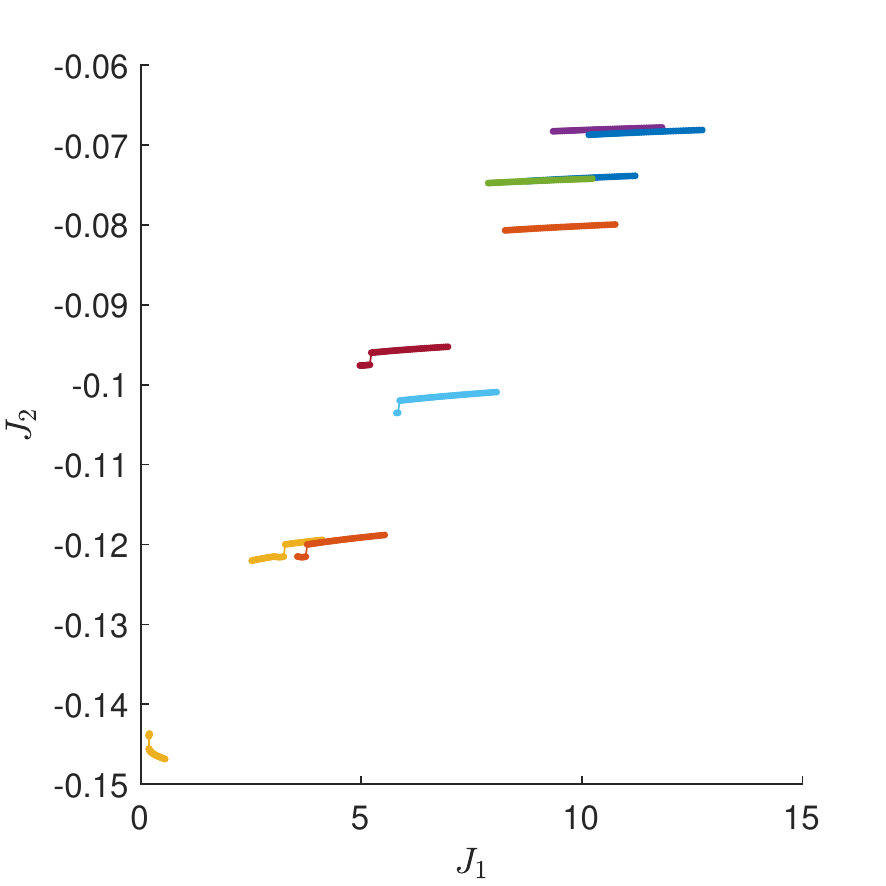} \hfil
            \includegraphics[width=.27\columnwidth]{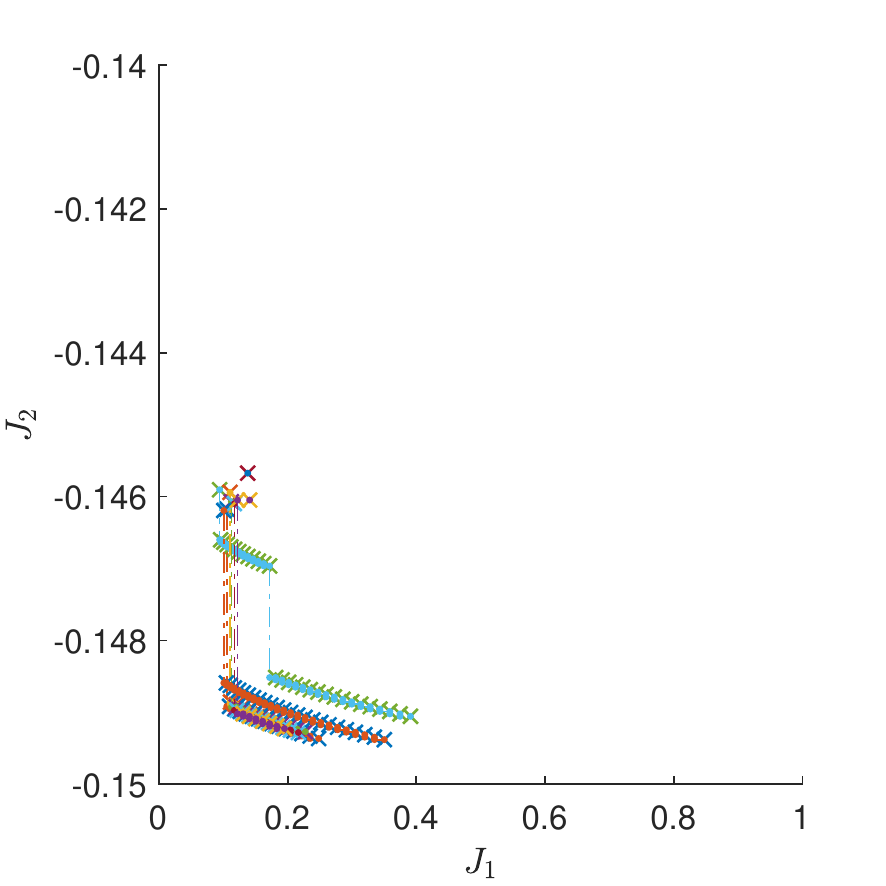} \hfil
            \includegraphics[width=.27\columnwidth]{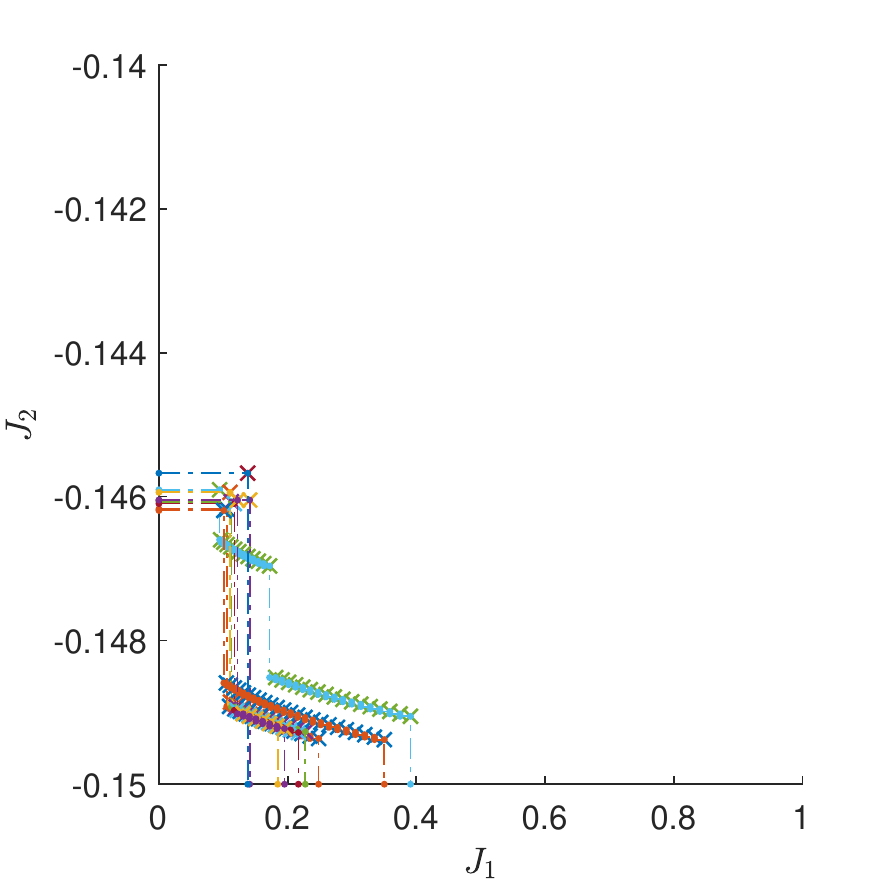}
            \caption{Uncertainty on the distance to the centerline ($d$).}
        \end{subfigure}    
        
        \begin{subfigure}[b]{\columnwidth}
            \includegraphics[width=.27\columnwidth]{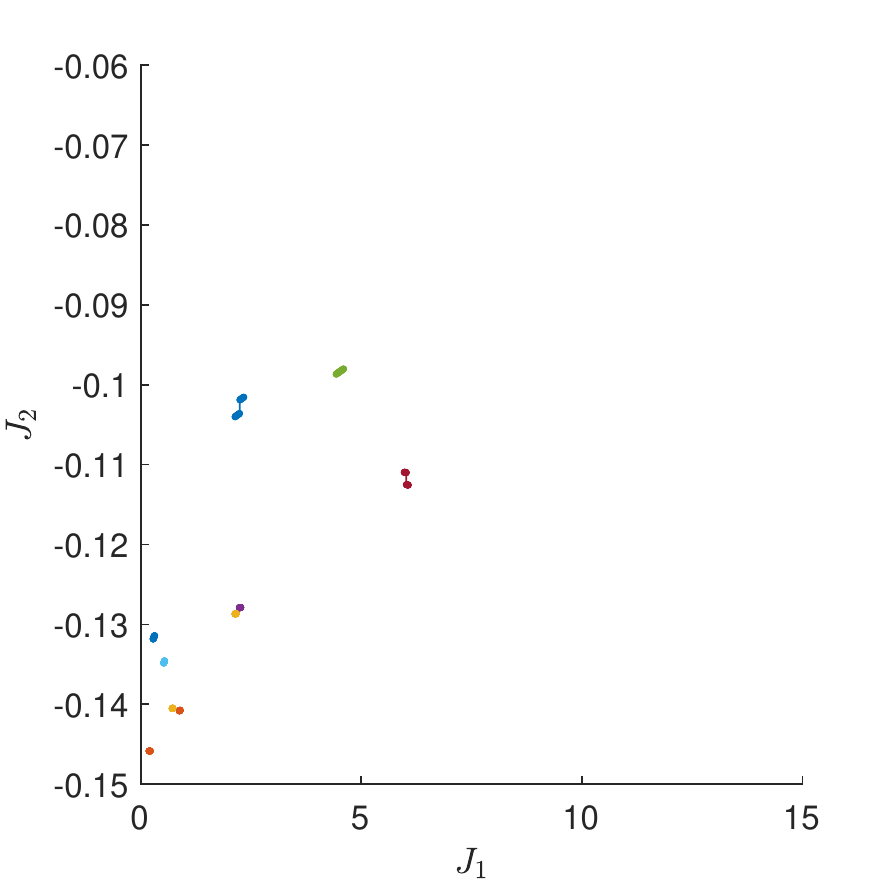} \hfil
            \includegraphics[width=.27\columnwidth]{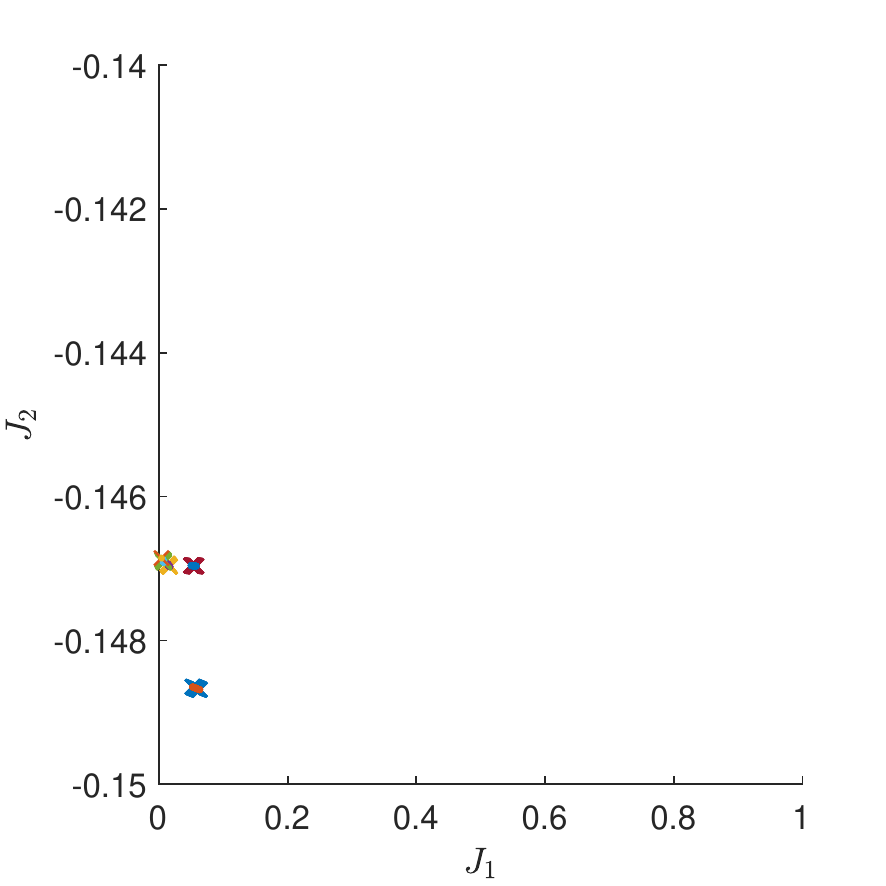} \hfil
            \includegraphics[width=.27\columnwidth]{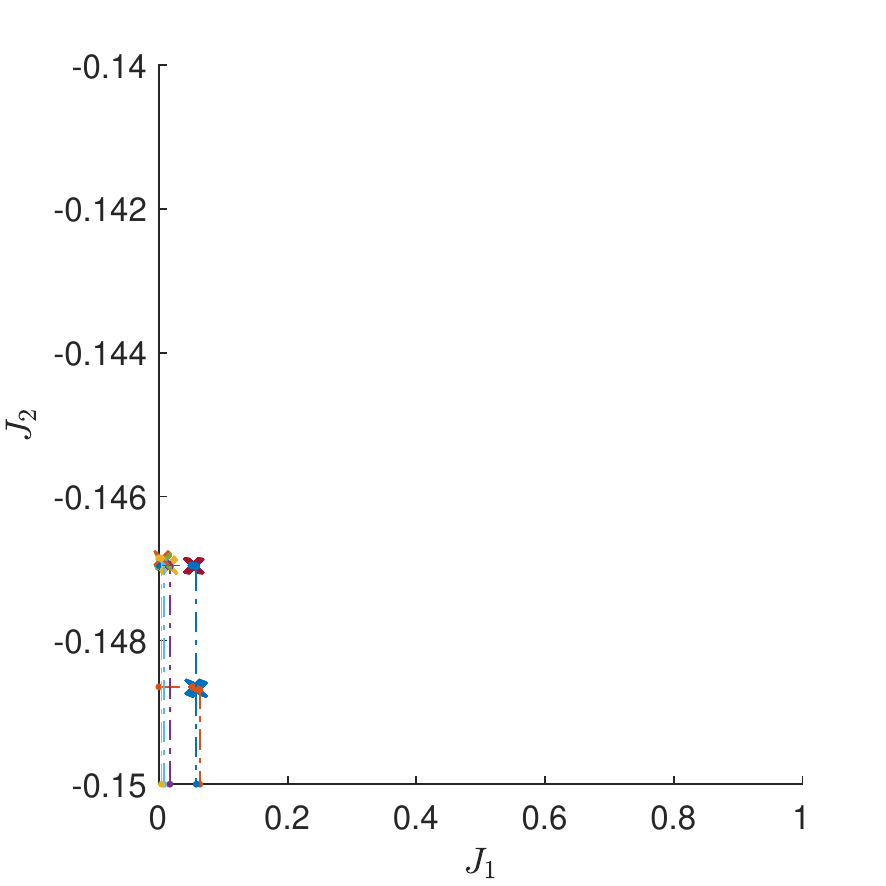}
            \caption{Uncertainty on the mass.}
        \end{subfigure}
        \caption{Scenario with initial conditions $\tilde x_0 = (-0.1801,    0.4349,         0,   -0.0694   -0.0222)^T$.}
        \label{fig:ex3unc}
    \end{figure}

    \begin{figure}
    	\centering
        \includegraphics[width=.45\columnwidth]{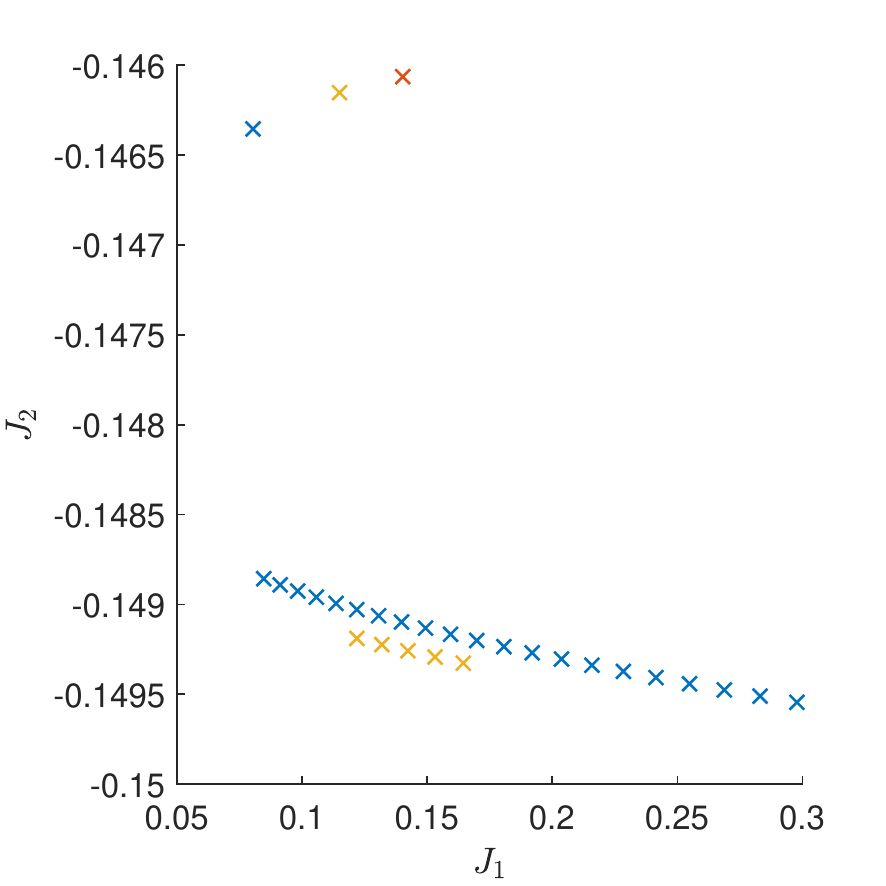} \hfil
        \includegraphics[width=.45\columnwidth]{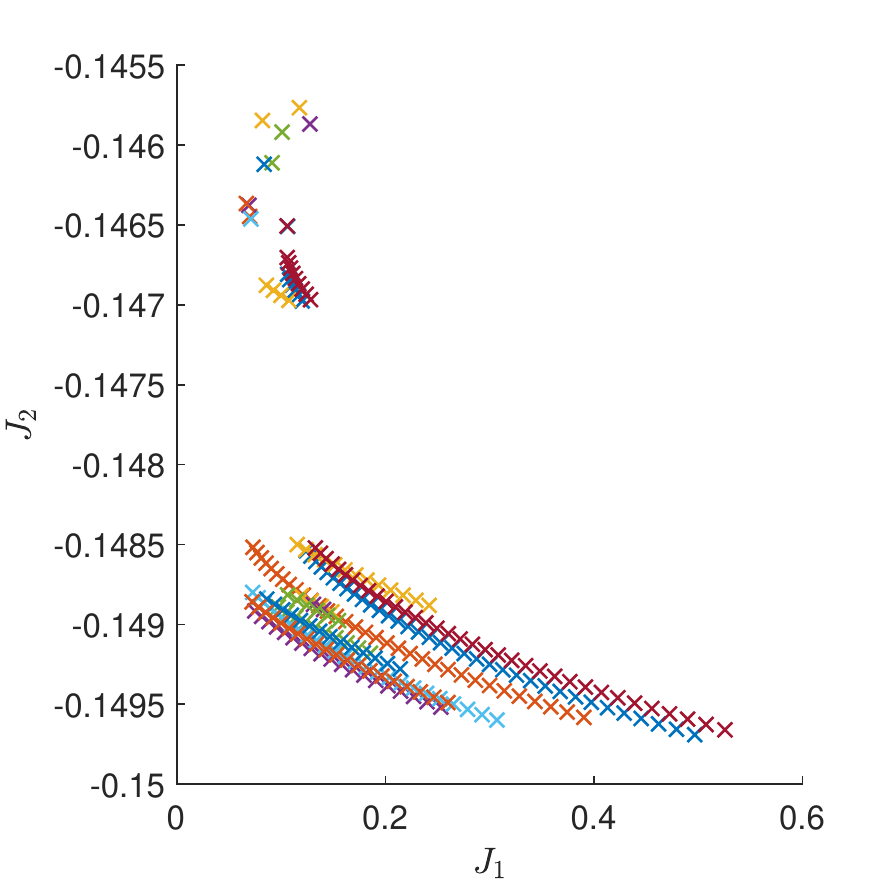}
        \caption{Efficient sets for the car maneuvering problem with uncertainty in the distance to the centerline $d$ for initial conditions $(-0.1801,    0.4349,         0,   -0.0694,   -0.0222)^T$ (left) and $(0.9842,   -0.9982,         0,    0.0783,   -0.0222)^T$ (right). The colors represent different efficient solutions.}
        \label{fig:fronts}
    \end{figure}

        To finish this section, we present a global sensitivity analysis on the parameters $\tilde x_0$. Global sensitivity analysis studies the amount of variance that would be neglected if one or more parameters were fixed. For our proposes, we computed the first-order sensitivity index \cite{sobol1993sensitivity}:
        \begin{equation}
            S_i = \frac{var\{E[\hat J|y_i]\}}{var\{\hat J\}},
        \end{equation}
        where $\hat J$ is the evaluation of the model at point $y = (\tilde x_0, m, L_f)$ and $E[y|x]$ is the conditional expectation. This index measures the contribution of input $y_i$ on the output variance without considering the interactions with other parameters.         
        
        Figure \ref{fig:si} shows the sensitivity index for the parameters for each objective functions. From the results, we can observe that the distance $d$ is the most sensitive parameter for $\hat J_1$ and $\kappa$ for $\hat J_2$. On the other hand, the mass and $L_f$ have almost no effect on the problem.
        \begin{figure}
            \centering
            \includegraphics[width=.5\columnwidth]{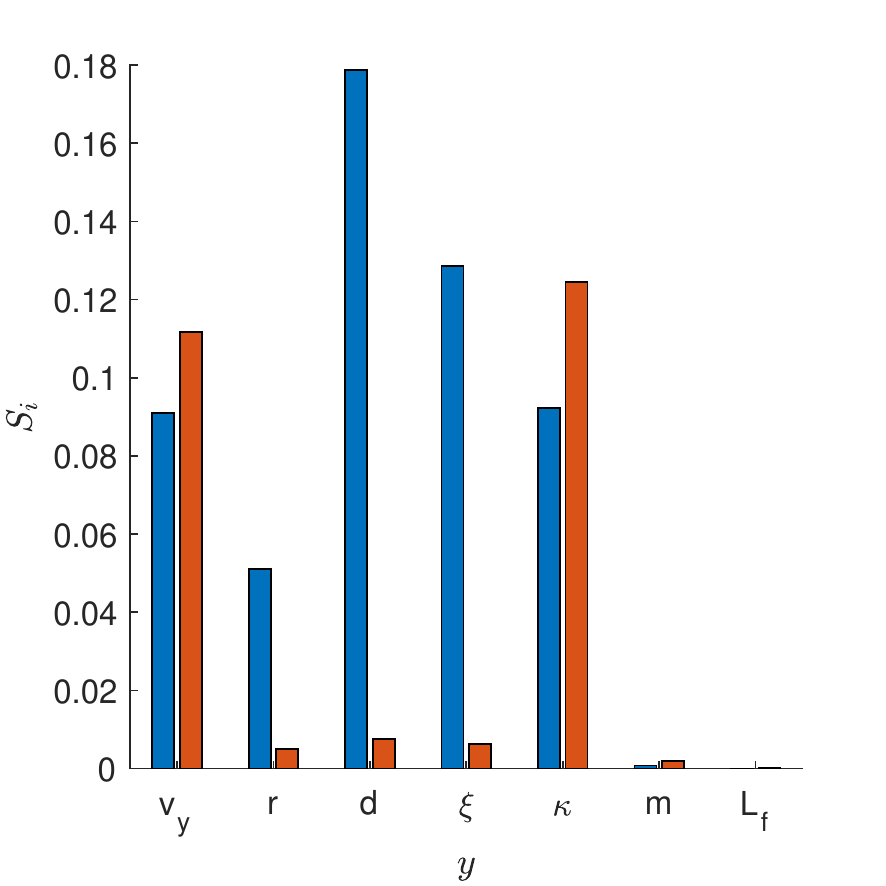}            
            \caption{Sensitivity index for each parameter. The first bar (blue) represents the first objective and the second one (red) the second objective.}        
            \label{fig:si}
        \end{figure}
        
    \subsection{Numerical Results}    
    In this section, we apply our approach to the multi-objective car maneuvering problem.
    %In the following, we present the parameters for the study. 
    Table~\ref{tab:param} shows the parameters for the library $\mathcal{L}$ that we use for the study.
    
    \begin{table}
        \centering
        \caption{Parameters for the library $\mathcal{L}$.}
        \label{tab:param}
        \begin{tabular}{|l|l|l|l|l|}
            \hline
            Variable    &Minimal value  &Maximal value  &Step size  &Number of grid points  \\  \hline
            $d$   &0      &10     &0.5    &21 \\
            $\xi$       &$-\pi/4$  &$\pi/4$   &$\pi/12$  &7  \\
            $v_y$   &-3     &3      &0.5    &13 \\
            $r$     &-6     &6      &1      &13 \\            
            $\kappa$&-0.1   &0.1    &0.025  &9  \\
            \hline
        \end{tabular}
    \end{table}        
    Further, we choose $v_x = 30, t_0=0$ sec, $t_e = 0.5$, and a time step of $h=0.05$ sec. Consequently, we have $u \in [u_{\min}, u_{\max}]^{11}$, where $u_{\min} = -0.5$ and $u_{\max} = -0.5$. In the online phase, we then apply the first three entries to the system. For our study, we assume uncertainty in the distance $d$ to the track centerline in the interval $[-0.25, 0.25]$. We selected the distance to the track as it the most sensitive parameter according to the sensitivity analysis from the previous section. 
            
    To evaluate the quality of the proposed hybrid approach, we first compare it to the approach proposed in \cite{ober2018explicit} (Opt Off/on). Therein, a library of Pareto fronts is constructed in the offline phase, cf.~Figure \ref{fig:bd}. In the online phase, a suitable input is computed via interpolation between library entries and according to the decision maker's preference (i.e., according to a fixed weighting vector). Alternatively, the weights may be adjusted automatically depending on the situation. One possibility is to increase the weight of the first objective (minimize the distance to the center of the lane) on straight parts and of the second objective (time to complete a lap) in curves. Note that this method addresses the problem without taking the uncertainty into account 
    %(Figure \ref{fig:bd}, Online Method) even though the model returns the initial conditions with uncertainty (Figure \ref{fig:bd}, Model). 
    This yields a good basis for comparison, as it helps to emphasize the effect of uncertainty when it is not considered in the optimization methods. 
    
    We then test the different components of our method separately. %First, we test the approach using the generic stochastic algorithm, the archiver, and the first step of the online phase (SBR-Off/on). This approach is the natural extension of the previous one to handle uncertainty.
    In the first step (SBR-Off/on), the only extension is to solve the Problem uMOCP, i.e., to take the uncertainty into account.
    In the next step, we only use online optimization via the reference point method (RPM), which was also used in \cite{ZF12} for deterministic problems.
    %Next, we tested the reference point method (RPM) based on the Hausdorff distance (SBR-$d_H$-RPM). 
     In Figure \ref{fig:bd}, this means that the database of optimal solutions computed offline is not used.
    This method requires a starting point to perform the optimization of the driving strategy. For this propose, the first starting point is drawn at random, and for subsequent starting points, we selected the driving strategy found in the previous iteration as the initial condition. 
    Finally, in the Hybrid approach we use both techniques. This means that starting from an initial guess within the library, we use the RPM to compute a feasible Pareto optimum in the online phase. The properties of the different algorithms are compared in Table~\ref{tab:characterization} with respect to the explicit treatment of uncertainties as well as offline and online computations. To decide which control strategy is going to be used at each iteration, we used a static approach. We set $Z=(0, 0.7125)^T$ for all cases, which corresponds to the vector formed with the minimum of each objective function (this is also called the ideal vector).
    %The comparison between this approach and the hybrid method, allows us to study the potential advantages that come from using the library of efficient solutions to select a starting point.
    
    \begin{table}
        \centering
        \caption{Offline and online computation parts of the different methods.}
        \label{tab:characterization}
        
        \begin{tabular}{|l|c|c|c|c|}
            \hline
             & Opt Off/on & SBR Off/on & SBR-$d_H$-RPM & Hybrid \\ 
            \hline
            Uncertainty  & ~ & X & X & X \\
            Offline comp. & X & X & ~ & X \\
            Online comp. & X & X & ~ & X \\
            RPM & ~ & ~ & X & X \\
            \hline
        \end{tabular}        
    \end{table} 
    
    Figure~\ref{fig:track_all} shows the trajectories created by the respective algorithms on a test track. Figure~\ref{fig:comp} then shows the corresponding driving strategy (i.e., the control input), the distance to the center of the track, and the initial conditions for each step of the simulation (from left to right). Table~\ref{tab:results} compares the methods on the test track for the two objectives (note that the objective of maximizing the driven distance for a fixed time horizon can be transformed to minimizing the lap time, i.e., the driving time for a fixed distance). 
    
    From the results, we can observe that there is a clear advantage of actively addressing uncertainty with respect to both objectives (compared to Opt Off/on). In particular, the strong zigzag behavior (Figure \ref{fig:comp}a in the middle) can be avoided, which results in lower lap times as well as a more secure driving behavior.
    As it was expected, the hybrid method has the best performance.
    %among the methods that handle uncertainty. 
    However, it has higher computational cost since an optimization problem needs to be solved in every step. Thus, it can only be used if the time windows are sufficiently large to complete the optimization process. Also, both the hybrid and SBR-$d_H$-RPM show less control effort, which is unexpected since a conservative driving strategy, in general, would tend to require more control effort. In this case, the strategy selected allows getting close to the centerline gradually while advancing on the track. The behavior results in less abrupt changes in the direction when compared to the Opt Off/on method.
    
    \begin{figure}
    	\centering
        \includegraphics[width=.45\textwidth]{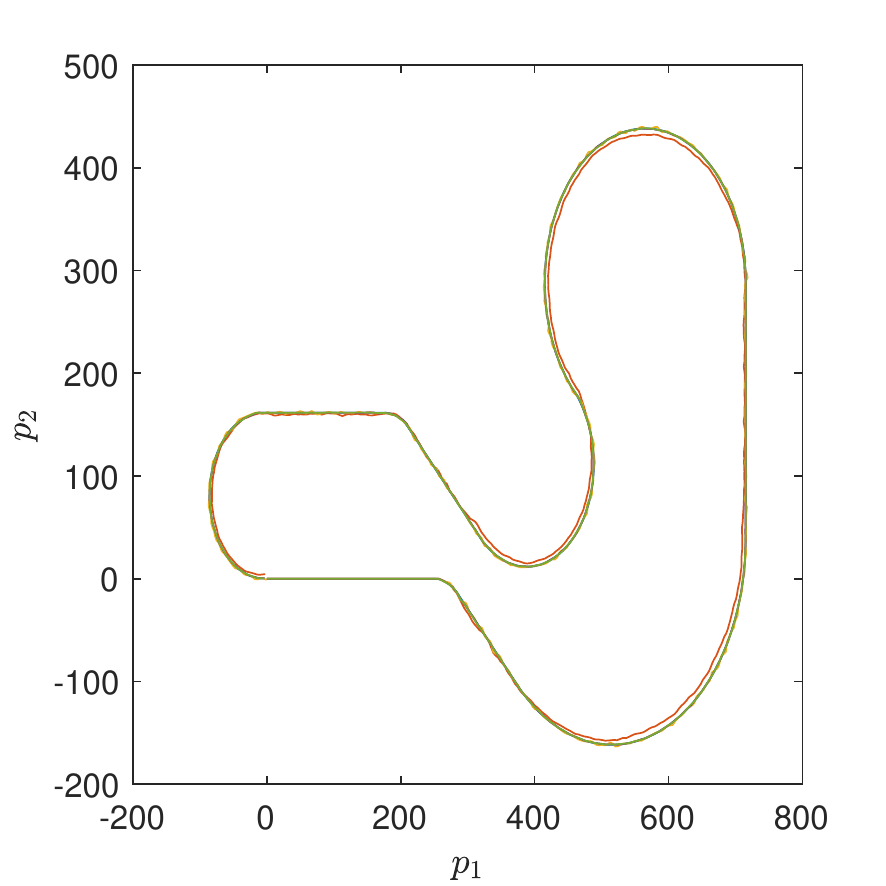} \hfil
        \includegraphics[width=.45\textwidth]{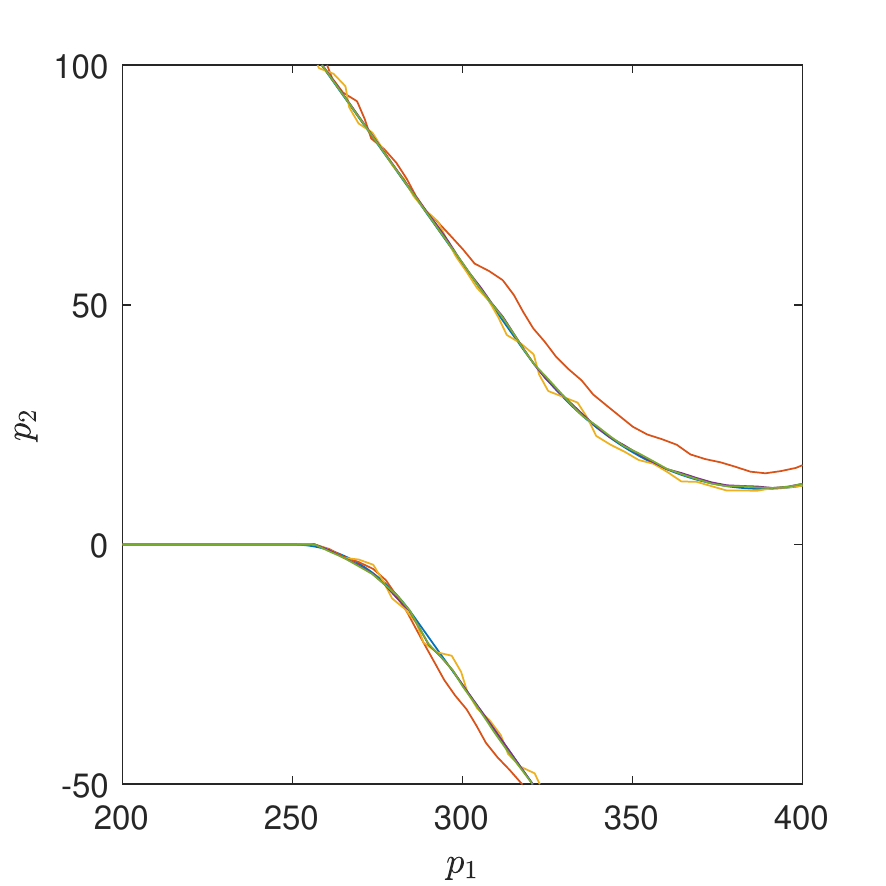} \\
        \includegraphics[width=.45\textwidth]{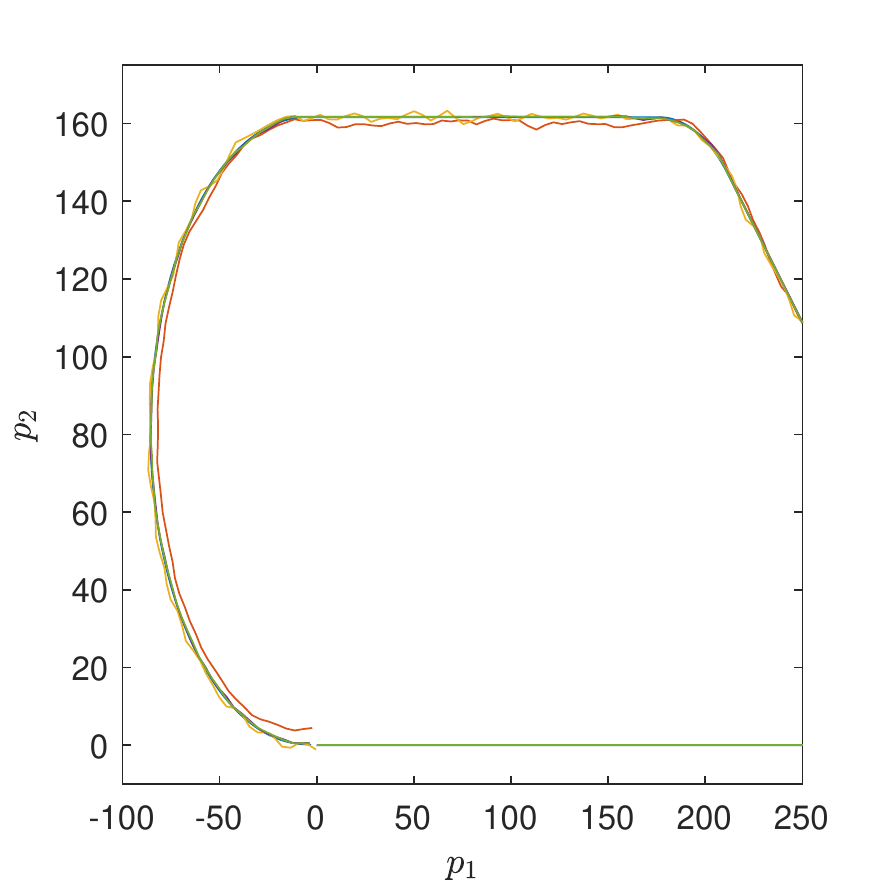} \hfil
        \includegraphics[width=.45\textwidth]{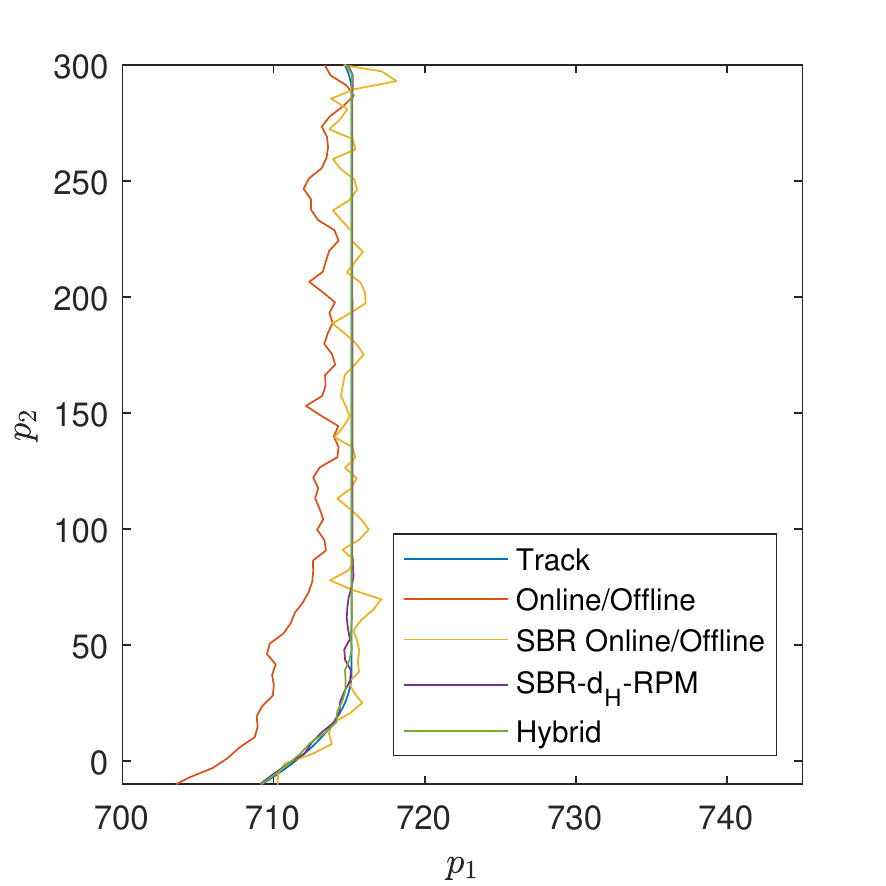}
        \caption{Test track results based on different approaches.}
        \label{fig:track_all}
    \end{figure}
    % \begin{figure}        
    %     \begin{subfigure}[t]{\textwidth}                 
    %         \includegraphics[width=.27\textwidth]{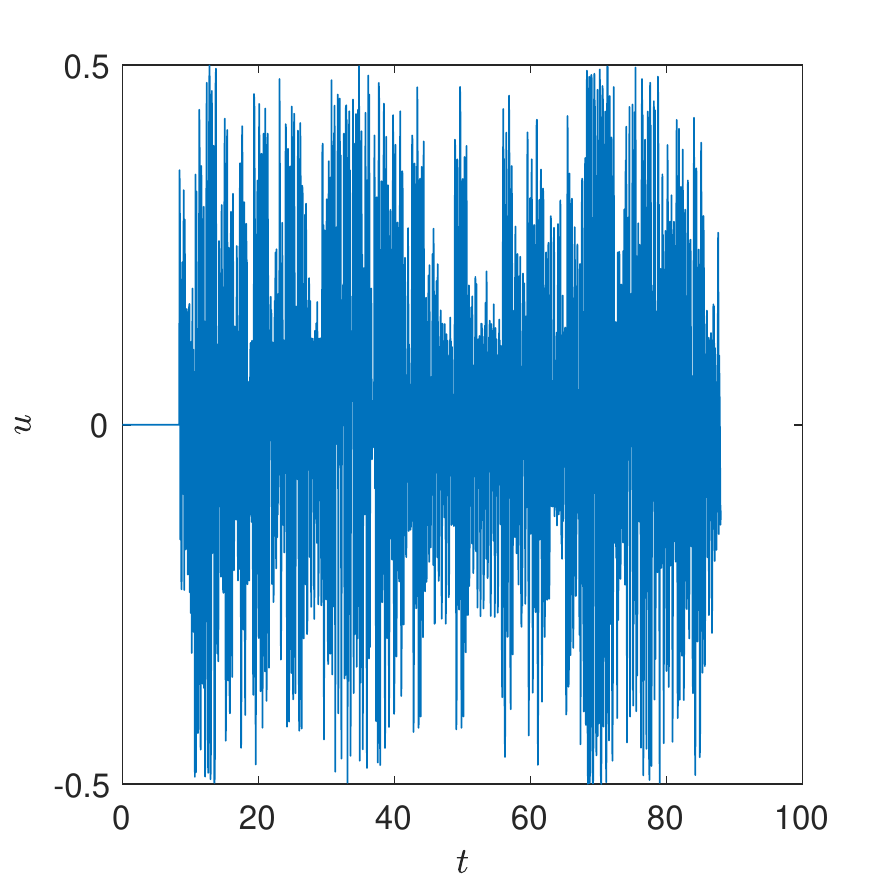}
    %         \includegraphics[width=.27\textwidth]{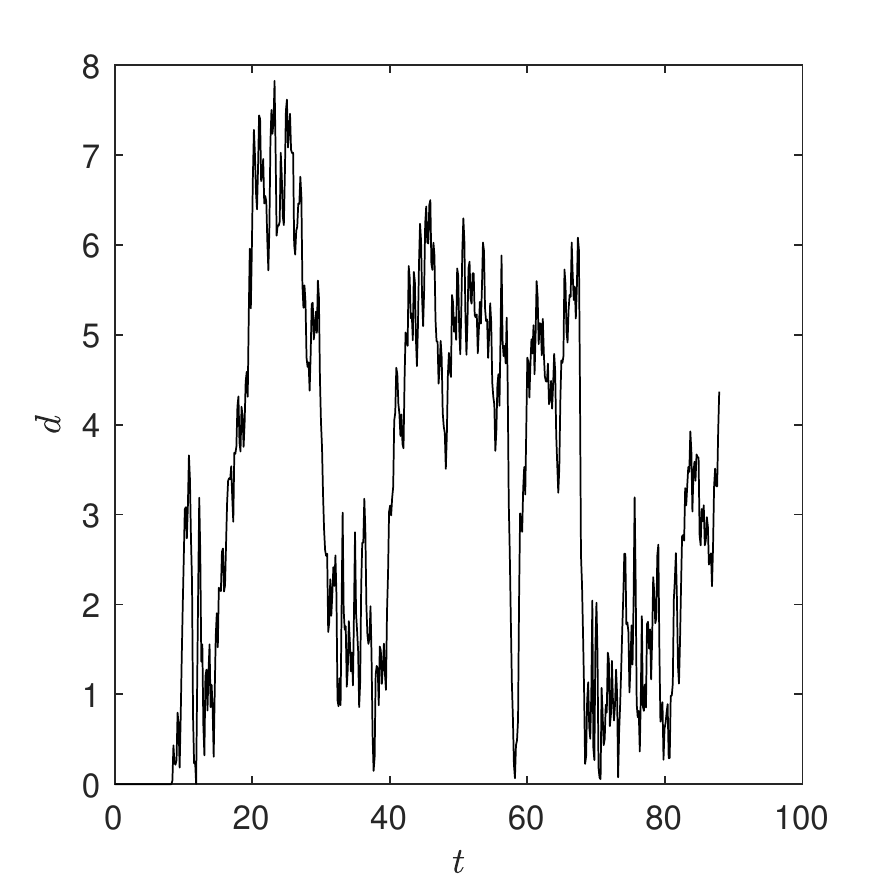}
    %         \includegraphics[width=.27\textwidth]{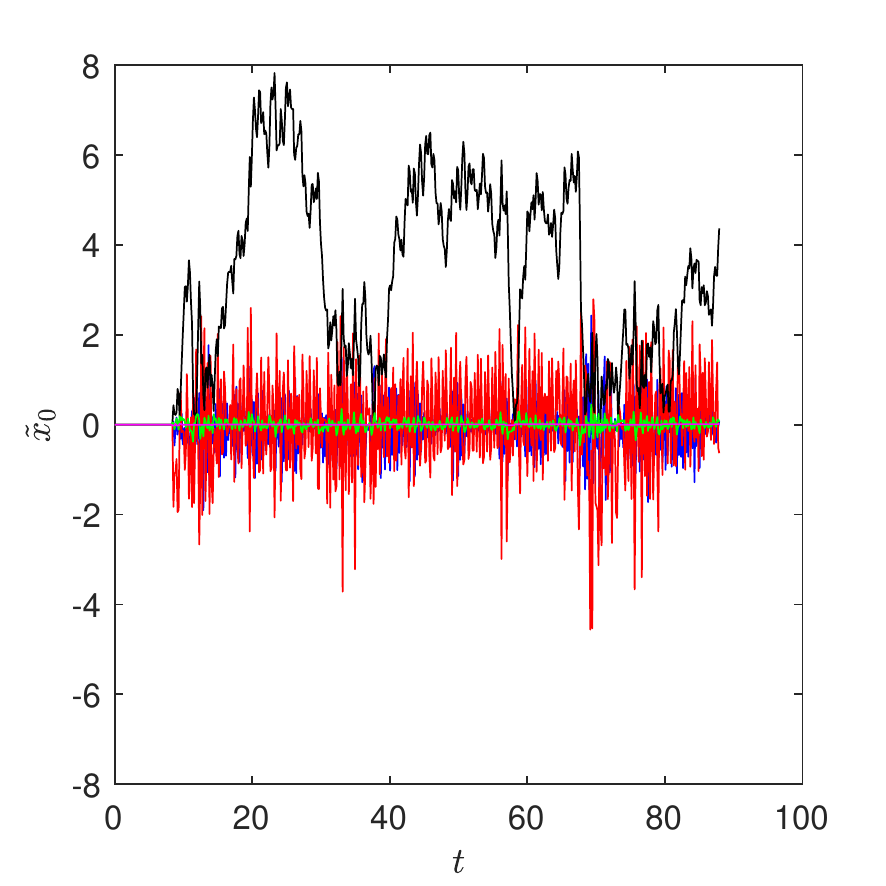}\\
    %     \caption{Opt Off/on}
    %     \end{subfigure}\\
    %     \begin{subfigure}[t]{\textwidth}                 
    %         \includegraphics[width=.27\textwidth]{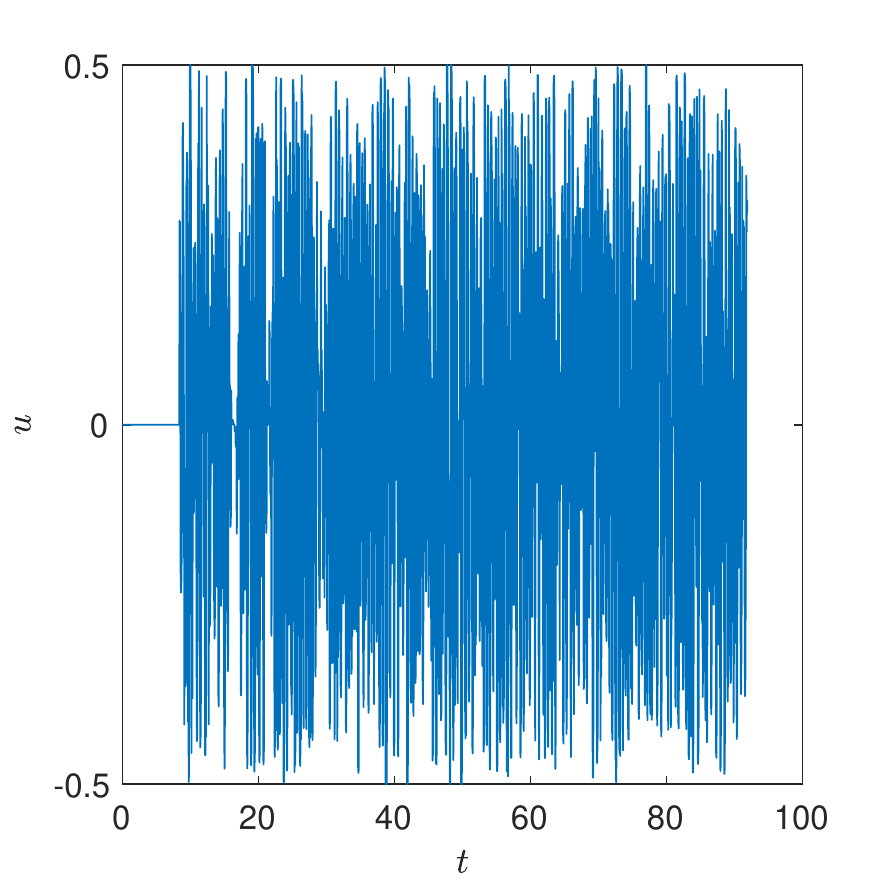}
    %         \includegraphics[width=.27\textwidth]{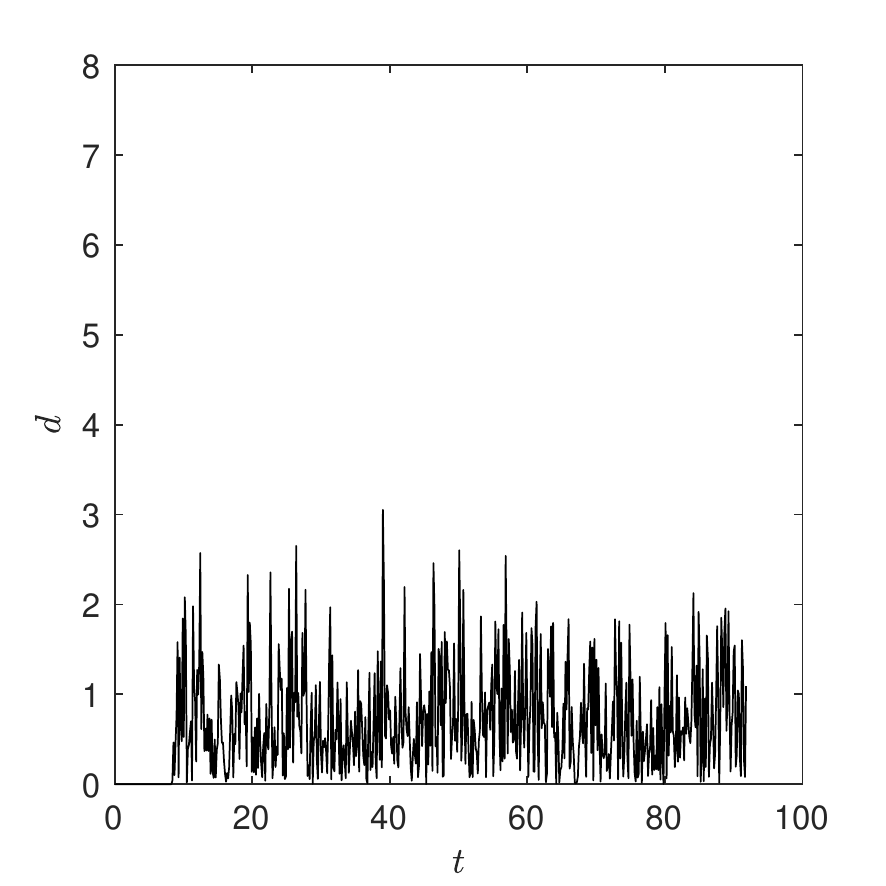}
    %         \includegraphics[width=.27\textwidth]{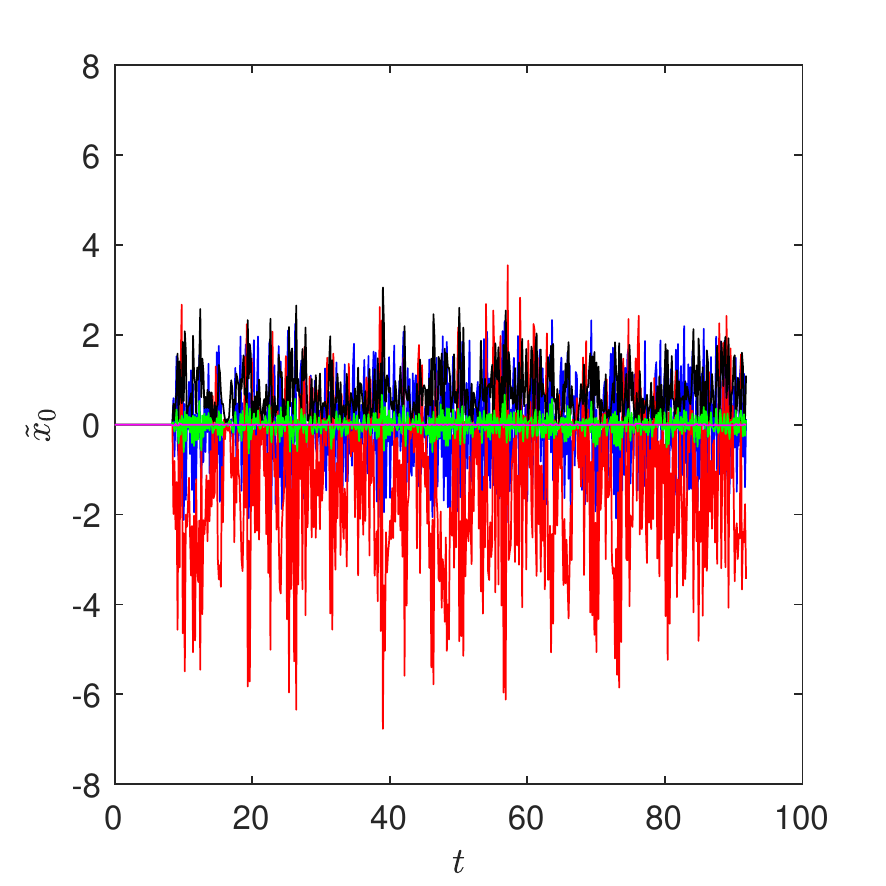}
    %     \caption{SBR Off/on }
    %     \end{subfigure}\\
    %     \begin{subfigure}[t]{\textwidth}                 
    %         \includegraphics[width=.27\textwidth]{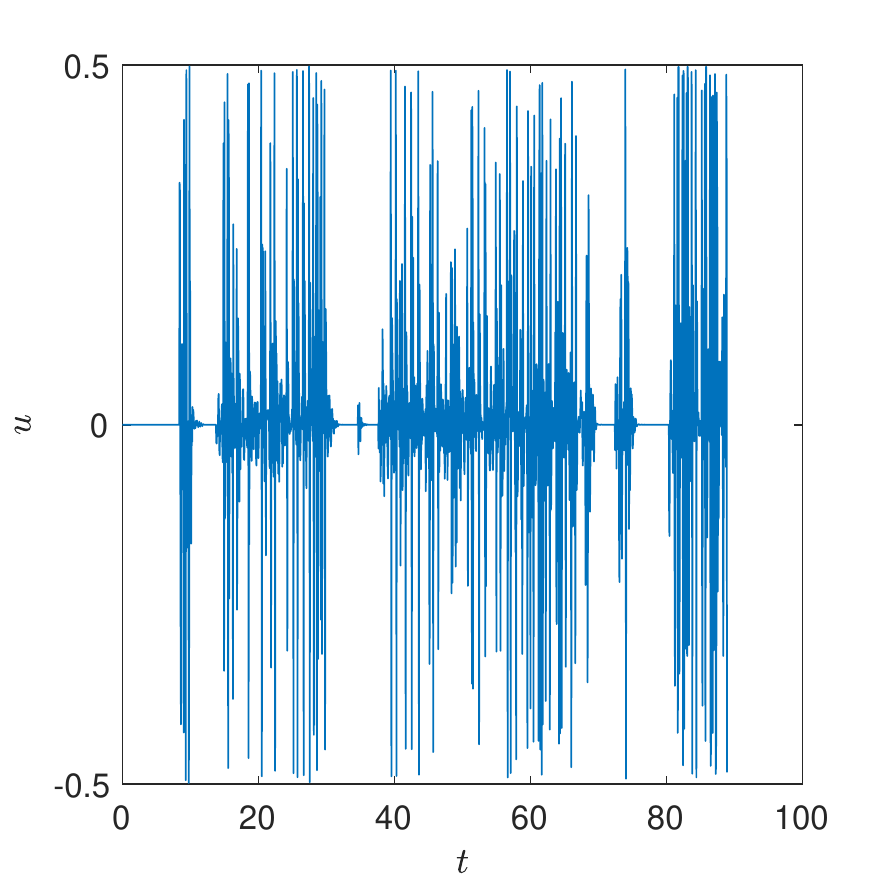}
    %         \includegraphics[width=.27\textwidth]{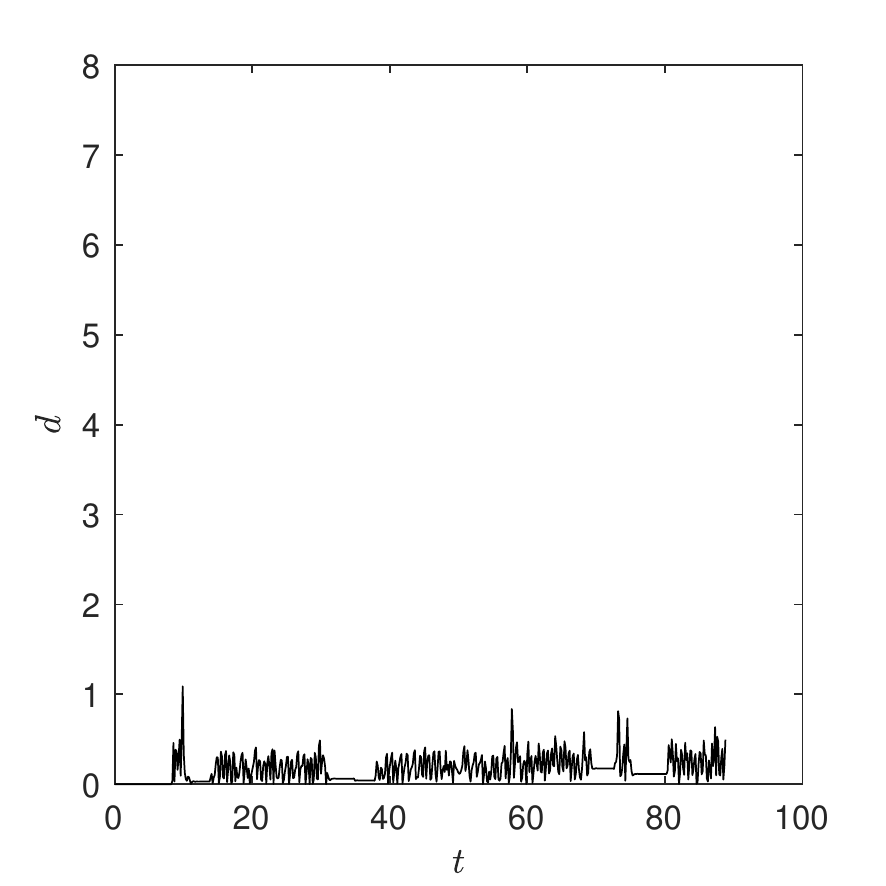}
    %         \includegraphics[width=.27\textwidth]{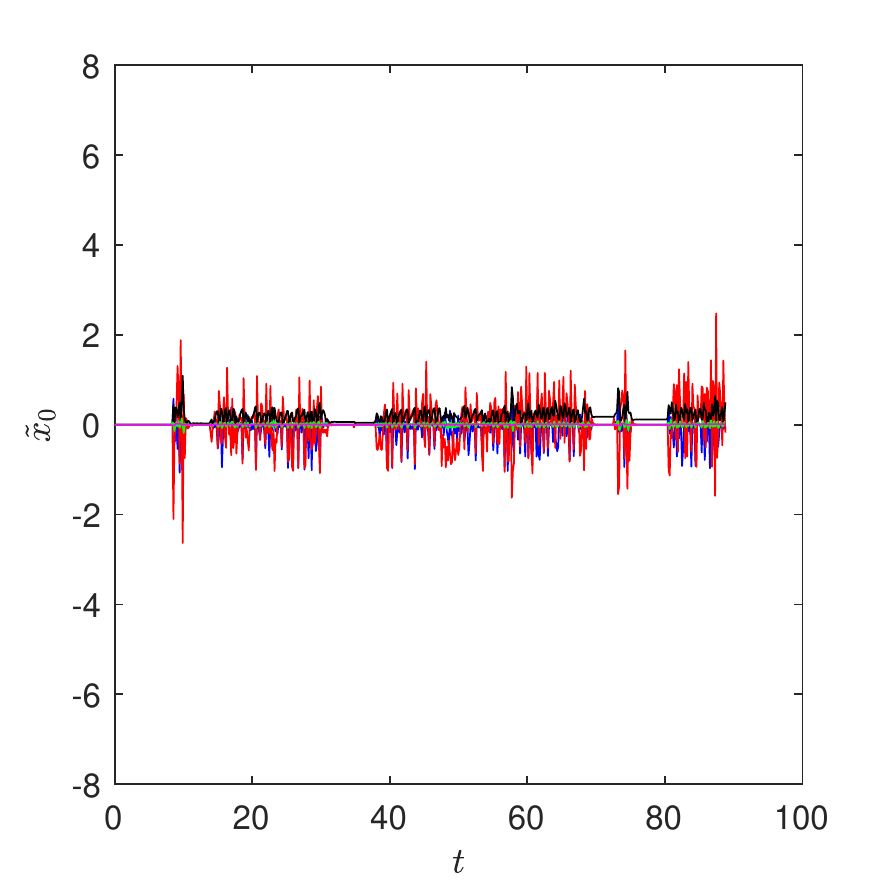}\\                                        
    %     \caption{SBR-$d_H$-RPM}
    %     \begin{subfigure}[t]{\textwidth}                 
    %         \includegraphics[width=.27\textwidth]{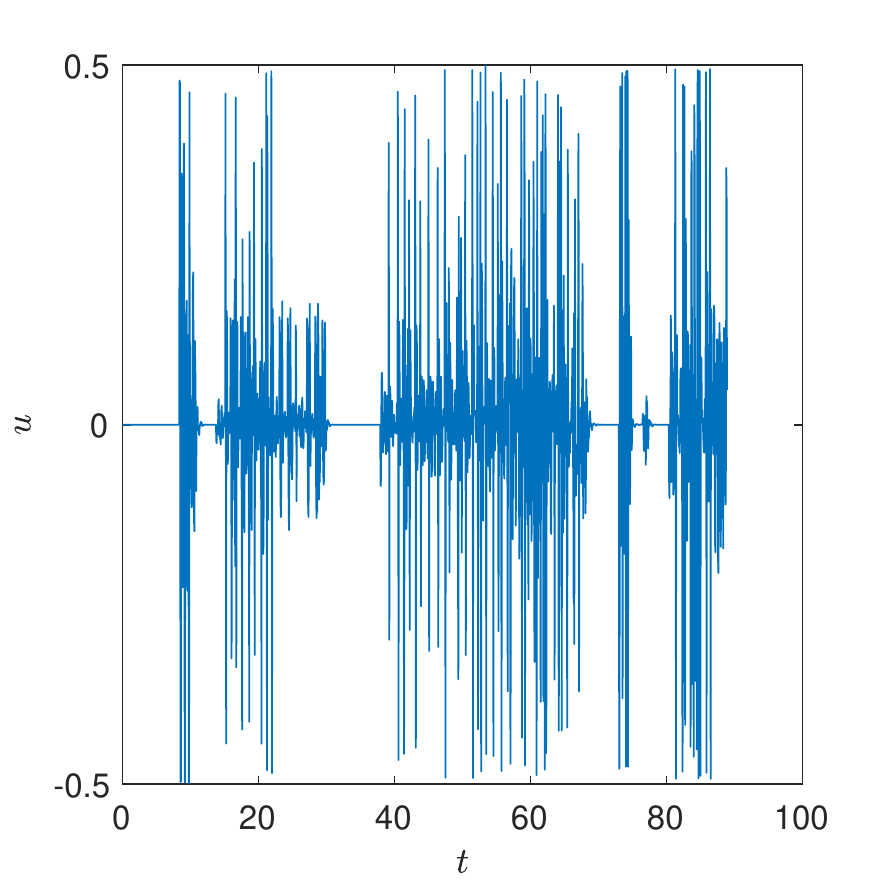}
    %         \includegraphics[width=.27\textwidth]{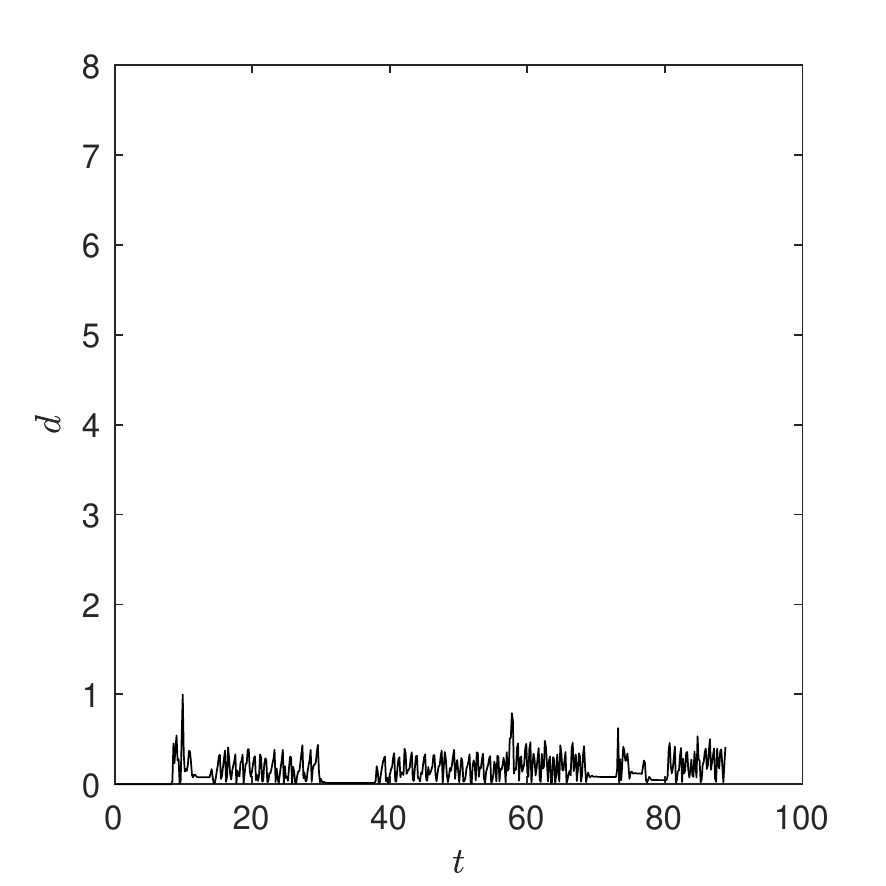}
    %         \includegraphics[width=.27\textwidth]{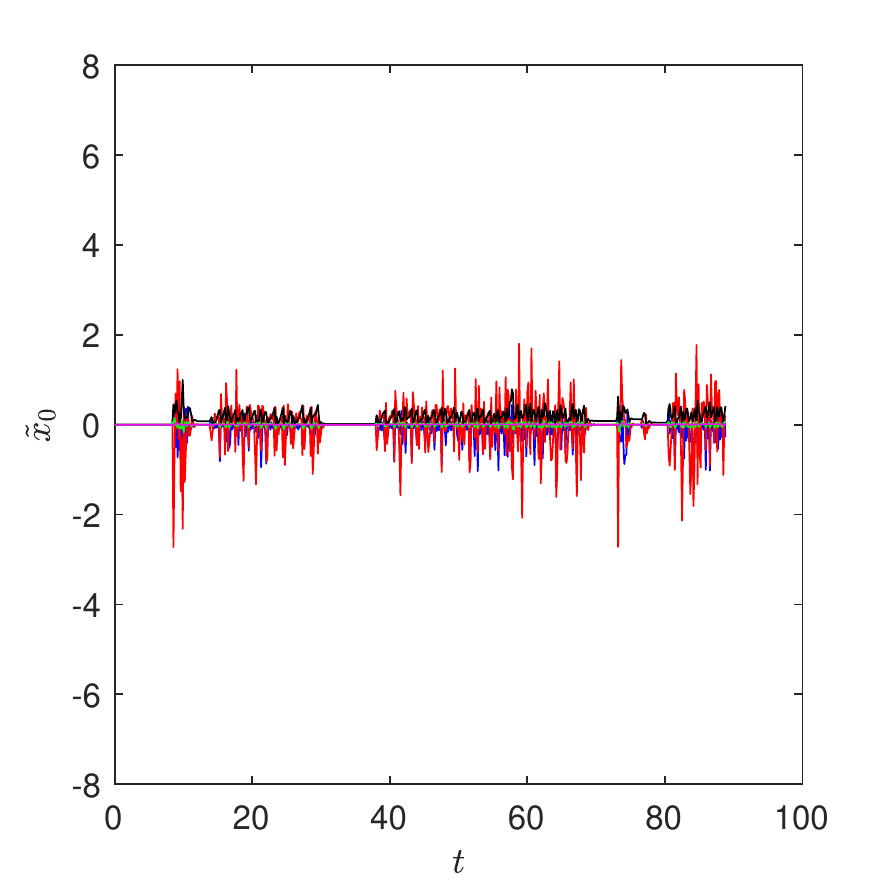}
    %         \caption{Hybrid}
    %     \end{subfigure}\\
    %     \end{subfigure}\\
    %     \caption{Control signal (left), distance to the track (center) and initial conditions of each method.}
    %     \label{fig:comp}
    % \end{figure}
    
    \begin{figure}        
    	\centering
        \includegraphics[width=.27\textwidth]{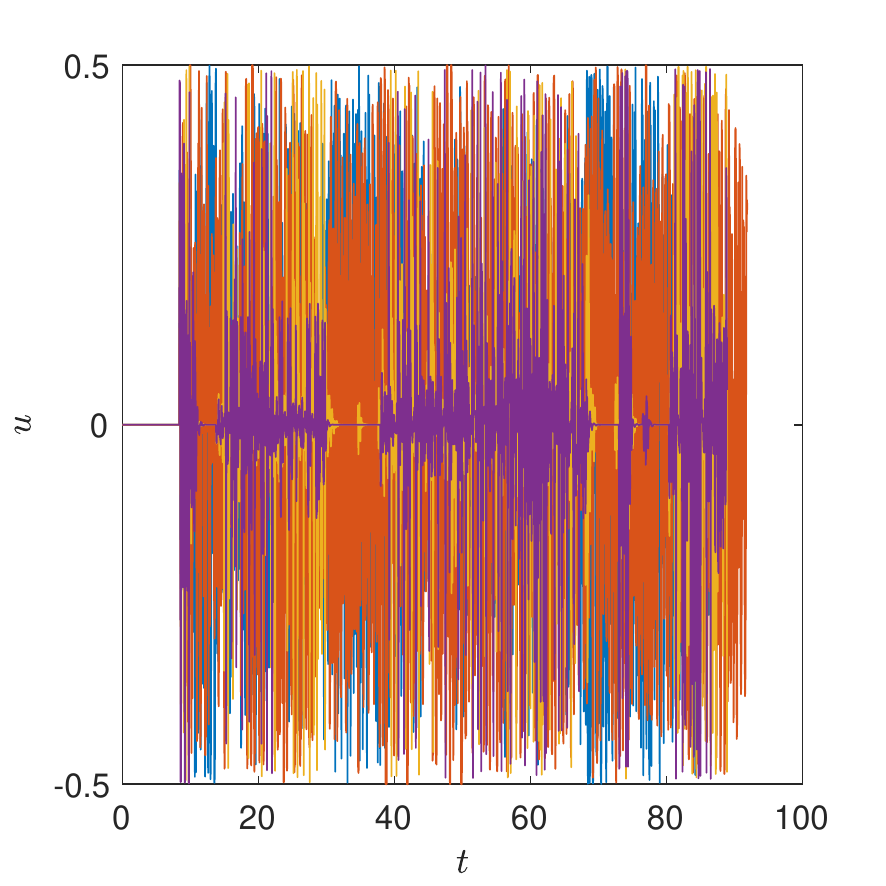}\hfil
        \includegraphics[width=.27\textwidth]{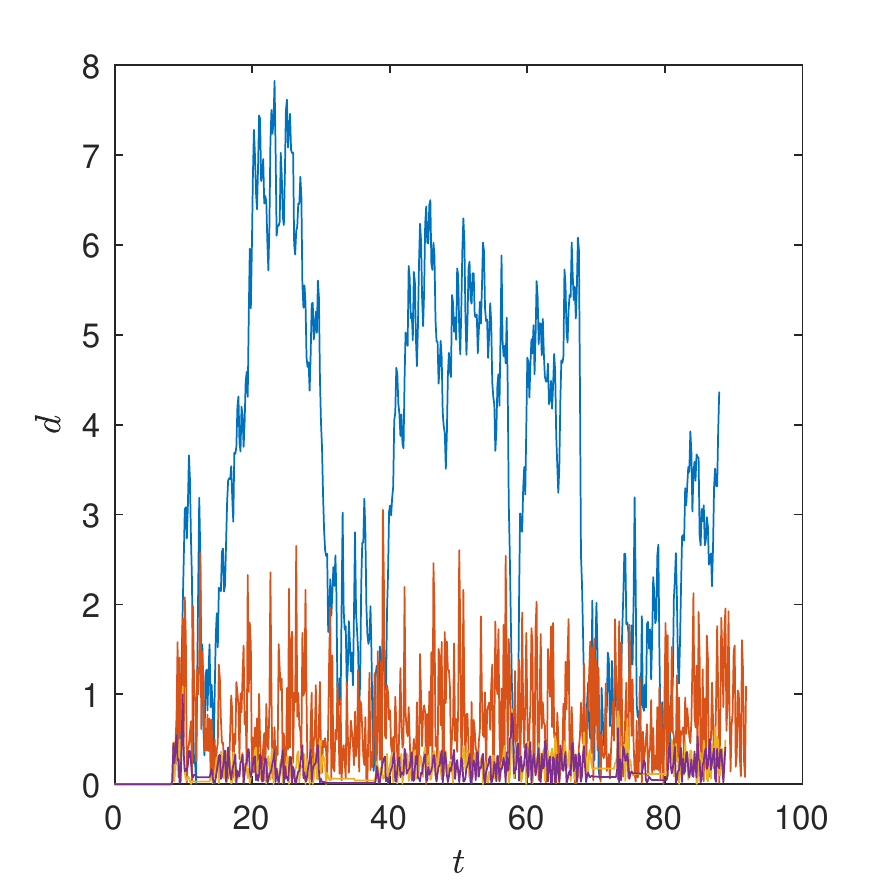}\hfil
        \includegraphics[width=.27\textwidth]{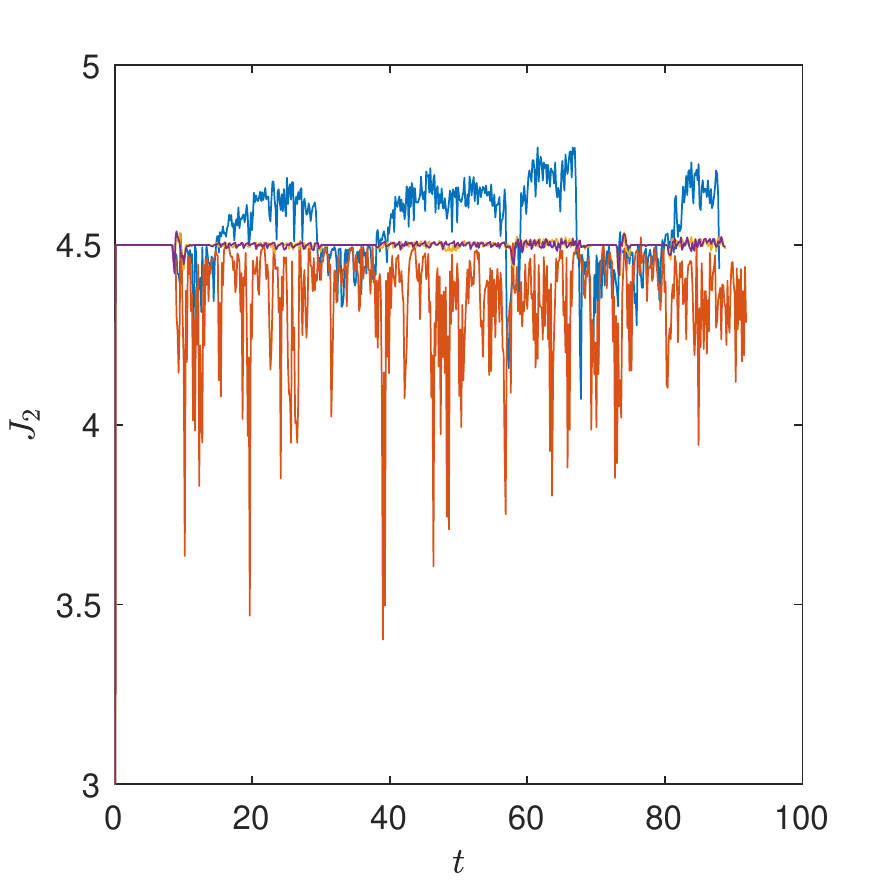}
        \caption{Control signal (left), distance to the track (center) and distance driven along the track (right) for Opt Off/on (blue), SBR Off/on (Orange), SBR-$d_H$-RPM (yellow) and the Hybrid (purple).}
        \label{fig:comp}
    \end{figure}
    
    \begin{table}
    \centering
        \caption{Comparison of the methods on the test track in terms of overall distance to the centerline and the time to complete a lap in the track.}
        \label{tab:results}
        
        \begin{tabular}{|p{2.5cm}|l|l|}
            \hline
            Method                  &Distance to center line    &Lap time\\ \hline            
            Opt Off/on              &396.46   &90.75    \\
            SBR Off/on              &454.95   &91.35   \\
            SBR-$d_H$-RPM            &82.474   &89.1    \\ 
            Hybrid                  &{\bf 74.435}   &{\bf 89.1} \\
            \hline
        \end{tabular}        
    \end{table}    
    
%%%%%%%%%%%%%%%%%%%%%%%%%%%%%%%%%%%%%%%%%%%%%%%%%%%%%%%%%%%%%%%%%%%%%%%%%%%%%%%%%%%%%%%%
    Finally, we compare the methods on five other tracks inspired by different racing circuits. We have taken the circuits images from Alastaro \cite{wiki:1}, Abudhabi \cite{wiki:2}, Catalunya \cite{wiki:3}, Melbourne \cite{wiki:4}, and Mexico \cite{wiki:5} and treated them to obtain their contours. In this case, besides the comparison in terms of distance to the centerline (Table \ref{tab:results2}) and lap time (Table \ref{tab:results3}), we also study the maximum distance to the centerline (Table \ref{tab:results4}). From the results, we can observe that in terms of overall distance to the centerline and maximum distance, the hybrid method yields the best results in at least four of the six test cases. In terms of lap time, the hybrid provides better results in three out of the six tracks. This result shows that in some cases the strategies can be over-conservative but at the same time yielding ``safer'' driving strategies.
    
    \begin{table}
    	\centering
        \caption{Comparison of the methods on the test tracks in terms of overall distance to the centerline.}
        \label{tab:results2}
        \begin{tabular}{|p{2.5cm}|l|l|l|l|l|l|}
            \hline
            Method               &Test &Alastaro &Abudhabi &Catalunya &Melburne &Mexico\\ \hline            
            Opt Off/on           &5484.4       &2357.1         &5903       &6625.7       &8131.8       & 3617.3\\
            SBR Off/on           &1231.6 &832.55  &1953.2  &2068.9  &2067.7  &1430.1 \\
            SBR-$d_H$-RPM         &307.42 &320.42  &715.08  &783.38  &{\bf 635.91}  &546.54 \\
            Hybrid               &{\bf 284.39}&{\bf 308.56} &{\bf 672.42}    &{\bf 644.01}  &660.96  &{\bf 514.7} \\
            \hline
        \end{tabular}        
    \end{table}
    \begin{table}
    	\centering
        \caption{Comparison of the methods on the test tracks in terms of lap time.}
        \label{tab:results3}
        \begin{tabular}{|p{2.5cm}|l|l|l|l|l|l|}
            \hline
            Method               &Test &Alastaro &Abudhabi &Catalunya &Melburne &Mexico\\ \hline            
            Opt Off/on           &{\bf 88.05}       & 50.25       & 119.4      &  {\bf 128.7}     & 141.75     &   85.2 \\
            SBR Off/on           &91.95 &47.25  &121.8  &133.2  &145.5  &84.6 \\
            SBR-$d_H$-RPM         &88.95 &45.75  &118.5  &129.6  &{\bf 141.45}  &82.05 \\
            Hybrid               &88.95 &{\bf 45.6}   &{\bf 117.6}  &129.15  &{\bf 141.45}  &{\bf 81.9} \\
            \hline
        \end{tabular}        
    \end{table}
    \begin{table}
    	\centering
        \caption{Comparison of the methods on the test tracks in terms of maximum distance to the centerline.}
        \label{tab:results4}
        \begin{tabular}{|p{2.5cm}|l|l|l|l|l|l|}
            \hline
            Method               &Test &Alastaro &Abudhabi &Catalunya &Melburne &Mexico\\ \hline            
            Opt Off/on           &7.8253 &12.519 &21.191  &7.7452  &8.1868  &15.809\\
            SBR Off/on           &3.3051 &4.5436 &9.1894  &{\bf 3.1675}  &3.6291  &7.035 \\
            SBR-$d_H$-RPM         &1.0887 &2.1066 &4.5819  &8.9224  &{\bf 3.0509}  &6.0147\\
            Hybrid               &{\bf 0.94629}&{\bf 3.1904} &{\bf 3.0061}  &3.8522  &3.7554  &{\bf 3.6958} \\
            \hline
        \end{tabular}        
    \end{table}
    
%%%%%%%%%%%%%%%%%%%%%%%%%%%%%%%%%%%%%%%%%%%%%%%%%%%%%%%%%%%%%%%%%%%%%%%%%%%%%%%%%%%%%%%%
\section{Conclusions and Future Work}
    In this work, we have introduced uncertainty to multi-objective optimal control problems in the initial state in the sense of set-based minmax robustness. 
    In order to achieve the necessary performance for feedback control, an offline/online strategy has to be used.
    For this purpose, we exploit symmetries in the control problems to reduce the complexity of the offline phase. Further, we have proposed a hybrid method to obtain feasible and more robust and efficient solutions. Therein, an additional optimization is performed in the online phase using a reference point scalarization approach.
    
    For the first step of the hybrid algorithm, we have presented a generic stochastic algorithm with an external archiver. This method can find an approximation of the set of efficient solutions in a single run of the algorithm. Further, we have proved that the algorithm converges in the limit to the set of efficient solutions in the Hausdorff sense. 
    In the second step, the reference point method is applied to improve further a solution selected from the library computed in the offline phase.
    %we proposed an extension of reference point methods to the problem at hand. 
    Given a reference point, the algorithm is capable of finding the closest solution when the worst-case regarding the uncertainty is considered. Moreover, under some assumption on the reference point, we proved that the solution found by the method is also efficient.
    
    Next, we have studied an application for autonomous driving to demonstrate the behavior of the methods. In our experiments, we found there is an advantage in considering uncertainty during the optimization process, and also in performing an additional online optimization. However, solving such problems becomes more expensive. Nevertheless, this hybrid approach yields very efficient and robust feedback signals while avoiding large parts of the expensive online computations. For the deterministic case, numerical experiments have shown that the performance is comparable to a globally optimal solution \cite{PSO+17}.

    For future work, it will be interesting to allow for adaptive weighting by a decision maker, for instance in order to allow for reactions to changing priorities or to external conditions. Moreover, it might be beneficial
    to study other stochastic algorithms to solve the problems more efficiently. Further, in the case of the reference point methods, it would be interesting to consider other distance measures for sets such as $\Delta_p$ \cite{SELC12} as well as study achievement scalarizing functions. Finally, it would be interesting to test the approaches in other real-world applications as well as to study equality constraints handling techniques for this kind of problem.

 \section*{Acknowledgments}
 CIHC acknowledges Conacyt for funding no. 711172. 
 SP acknowledges support by the DFG Priority Programme 1962.
 The calculations were performed on resources provided by the Advanced Research Computing (ARC) of the University of Oxford.

%%%%%%%%%%%%%%%%%%%%%%%%%%%%%%%%%%%%%%%%%%%%%%%%%%%%%%%%%%%%%%%%%%%%%%%%%%%%%%%%%%%%%%%%%%%%%%
%% Bibliography
%%%%%%%%%%%%%%%%%%%%%%%%%%%%%%%%%%%%%%%%%%%%%%%%%%%%%%%%%%%%%%%%%%%%%%%%%%%%%%%%%%%%%%%%%%%%%%
\bibliographystyle{alpha}
\bibliography{Bibliography}
\end{document}